\newtheorem{lemma}{Lemma}[section]
\newtheorem{theorem}[lemma]{Theorem}
\newtheorem{cor}[lemma]{Corollary}
\newtheorem{prop}[lemma]{Proposition}
\newtheorem{defn}[lemma]{Definition}
\newcommand{\Z}{\mathbb{Z}}
\newcommand{\Q}{\mathbb{Q}}
\newcommand{\Ksm}[2]{K(\mathbf{#1},\mathbf{#2})}
\title[Concordance invariants from $E(-1)$]{Concordance invariants from the $E(-1)$ spectral sequence on Khovanov homology}
\author{William Ballinger}
\email{whb5@math.princeton.edu}
\address{Department of Mathematics \\ Princeton University}
\begin{document}

\maketitle

\begin{abstract}
We construct the concordance invariant coming from the $E(-1)$ spectral sequence on Khovanov homology in the same way Rasmussen's $s$ invariant comes from the Lee spectral sequence, and show that it gives a bound on the nonorientable slice genus. 
\end{abstract}

\section{Introduction}

Rasmussen's $s$ invariant \cite{rasmussen2010khovanov} is defined via the Lee perturbation of the differential on Khovanov homology: adding the Lee differential to the original Khovanov differential gives a filtered chain complex with one dimensional total homology, and the filtration degree of a generator gives the $s$ invariant. It was conjectured in \cite{dunfield2006superpolynomial} and proven in \cite{rasmussen2016some} that Khovanov homology (as well as all of the Khovanov-Rozansky $\operatorname{sl}(n)$ homologies) admits a second perturbation $d_{-1}$ of the differential with one-dimensional total homology, to which this same construction can be applied.

Specifically, adding $d_{-1}$ to the Khovanov differential gives a filtered chain complex $\bar{C}(K)$, with the filtration coming from the original homological grading. Any knot cobordism from a knot $K_0$ to another knot $K_1$ induces a map ${\bar{C}(K_0) \to \bar{C}(K_1)}$, but unlike the cobordism maps on Lee's complex this map is zero on total homology as soon as the genus of the cobordism is at least $1$. Somewhat surprisingly, however, a different map can be defined for any nonorientable cobordism $K_0 \to K_1$ that induces an isomorphism on the total homology. This leads to bounds on the nonorientable slice genus of a knot, where in what follows $t(K)$ is the knot invariant defined by the highest filtration degree of a generator of the homology of $\bar{C}(K)$:
\begin{theorem}\label{nonorientablegenusbound}
$t(K)$ is a concordance homomorphism valued in even integers. If $K \subset S^3$ bounds a (possibly nonorientable) surface $\Sigma \subset D^4$ of normal euler number $e$, then $|t(K) + e/2| \le b_1(\Sigma).$
\end{theorem}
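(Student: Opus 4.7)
The plan is to imitate Rasmussen's proof of the $s$-invariant bound using the filtered complex $\bar{C}(K)$ and its one-dimensional total homology, with the new ingredient being the nonorientable cobordism map mentioned in the introduction. For concordance invariance, a concordance $K_0 \to K_1$ induces a filtered chain map $\bar{C}(K_0) \to \bar{C}(K_1)$ whose filtration shift is determined by its Euler characteristic (here $0$); because the induced map on one-dimensional total homology is nonzero, a cycle of filtration $t(K_0)$ representing the generator maps to a cycle of filtration $\geq t(K_0)$ representing the generator of $H(\bar{C}(K_1))$, so $t(K_0) \leq t(K_1)$, and reversing the concordance yields equality. For the homomorphism property, the standard filtered equivalence $\bar{C}(K_0 \# K_1) \simeq \bar{C}(K_0) \otimes \bar{C}(K_1)$ combined with the fact that the generator of the total homology of a tensor product is the tensor product of generators gives $t(K_0 \# K_1) = t(K_0) + t(K_1)$. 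Evenness of $t(K)$ follows from parity constraints on the homological grading of classes surviving to total homology, using the bigrading structure of Khovanov homology for knots.

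For the slice-genus inequality, I would equip $\Sigma \subset D^4$ with a handle decomposition relative to $K$ consisting of $0$-handles, $1$-handles (band/saddle moves), and cross-cap attachments (one for each M\"obius summand), and track the filtration shift of each elementary cobordism. Orientable pieces contribute shifts exactly as in Rasmussen's proof. Each cross-cap contributes the new nonorientable map, which by construction induces an isomorphism on one-dimensional total homology; its filtration shift depends on both its position in the handle decomposition and the normal framing of the cross-cap in $D^4$. Composing all pieces yields a filtered map $\bar{C}(K) \to \bar{C}(\text{unknot})$ that is a nonzero map on total homology, so the total filtration shift sums to a bound on $|t(K) + e/2|$ by $b_1(\Sigma)$.

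The main obstacle is the cross-cap map itself. One must construct a $d_{-1}$-compatible chain map for a M\"obius-band cobordism, show that it induces an isomorphism on total homology (in contrast to orientable cobordism maps, which vanish once the genus is positive), and pin down its filtration shift precisely. The $e/2$ term should emerge from the framing-sensitivity of this map: the two possible normal twists of a cross-cap produce maps with different filtration behavior, so summing over all cross-caps assembles into the normal Euler number $e$ of $\Sigma$. Verifying that the local maps paste together coherently and are invariant under isotopy (up to filtered chain homotopy) is where the bulk of the technical work will lie, and is the natural place for the $b_1(\Sigma) \pm e/2$ asymmetry to appear in the argument.
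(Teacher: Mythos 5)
Your outline for the orientable part and for the overall structure (nonzero maps on one-dimensional total homology give filtration bounds) is on the right track, and your sketch of the homomorphism and evenness claims via tensor products and parity of the $\delta$-grading is reasonable. But the nonorientable part as you've described it has a gap that would break the argument.

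You propose to decompose $\Sigma \subset D^4$ into $0$-handles, $1$-handles, and cross-caps, and claim that ``orientable pieces contribute shifts exactly as in Rasmussen's proof.'' That is false in this setting, and it is the central difficulty the theorem has to overcome. Unlike the Lee complex, the perturbed complex $\bar{C}(K)$ has the property that an orientable cobordism of genus $g \geq 1$ between knots induces the \emph{zero} map on total homology (the cobordism map acts by $(2x)^g$ on $H(C(K)) \cong \Z[x]$, which dies in the reduced quotient). So any handle decomposition of $\Sigma$ that passes through an orientable positive-genus piece produces a composite that vanishes on total homology, and you learn nothing. The paper avoids this by invoking the Kamada/Ozsv\'ath--Stipsicz--Szab\'o normal form theorem for nonorientable cobordisms: $\Sigma$ factors as a concordance, then $b_1(\Sigma)-1$ \emph{nonorientable band moves} between knots (unoriented saddles, each contributing $1$ to $b_1$), then another concordance, with no orientable genus appearing anywhere. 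This reduction is not optional bookkeeping; it is what makes the argument go through.

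Relatedly, the ``cross-cap map'' you want is not quite the right local model. The paper's elementary nonorientable move is a band attached to a single knot producing another knot with reversed relative orientation on one arc, and the map $\sigma$ is built from the crossing-change isomorphism $I_{01}$ corrected by a homotopy $h$ between $x_0 - x_1 - x_2 + x_3$ and $-2x_a + 2x_b$ (which vanishes in the reduced complex). The filtration shift $e/2 + 1$ per band comes from comparing writhes via Lemma~4.2 of Ozsv\'ath--Stipsicz--Szab\'o, not directly from a framing of a cross-cap. Your intuition that $e$ emerges from framing sensitivity is correct, but to make it precise you need to phrase it in terms of the writhe change under a single unoriented band move. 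So: replace your Morse-theoretic handle decomposition with the Kamada/OSS normal form, and replace the cross-cap map with the nonorientable band map, and the rest of your outline assembles into the paper's argument.
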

Combining this with a similar bound coming from the knot signature $\sigma(K)$ (normalized so that $\sigma(T_{2,3}) = -2$), just as in \cite{batson2014nonorientable} and \cite{ozsvath2016unoriented}, gives the bound
\begin{equation}
|t(K) + \sigma(K)| \le 2\gamma_4(K)
\end{equation}
where $\gamma_4(K)$ is the minimal value of $b_1(\Sigma)$ over all nonorientable surfaces with boundary $K$. Just as for the $s$ invariant, defining $t$ requires a choice of coefficient field. The above property holds over any field, but in principle the value of $t$ on particular knots could depend on this choice. 

This bound is essentially identical to the nonorientable four-genus bound from \cite{ozsvath2016unoriented}, except with the invariant $t(K)$ in place of the invariant $\upsilon(K) = \Upsilon_K(1)$ used by Ozsv\'ath, Stipsicz, and Szab\'o. For many knots, including alternating knots and torus knots, it turns out that $t(K) = \upsilon(K)$, so the two invariants give the same bound on the nonorientable four-genus. In particular, the bounds on $\gamma_4(T_{p,q})$ from \cite{jabuka2018nonorientable} could be proven using $t$ instead of $\upsilon$. However, in general the two invariants do not agree.

The complex $\bar{C}(K)$ is obtained as a quotient of a larger complex $C(K)$ which carries an action of the polynomial ring $\Z[x]$ and is related to Bar-Natan's universal Khovanov homology in the same way as $\bar{C}$ is to Khovanov homology. Using the module structure of $C(K)$ allows us to define stronger invariants: each integer $T_n(K)$ given by the maximal filtration degree of an element representing $(2x)^n$ in homology is a concordance invariant, but unlike $t(K)$ these are not concordance homomorphisms. These give potentially stronger bounds on the slice genus than $s$ and $t$.
\begin{theorem}\label{orientedgenusbound}
Each $T_i(K)$ is a negative, even integer and a concordance invariant. Furthermore, $T_{i+1}(K) \ge T_i(K)$, and $T_i(K) = 0$ for $i \ge g_4(K)$.
\end{theorem}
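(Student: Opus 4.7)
The plan is to verify the four assertions in turn, leaning on the structure of $C(K)$ as a filtered $\Z[x]$-module chain complex equipped with filtered $\Z[x]$-linear cobordism maps. For concordance invariance, I would follow the standard Rasmussen-style argument: a concordance $\Sigma: K_0 \to K_1$ and its reverse $\bar\Sigma$ induce filtered $\Z[x]$-linear chain maps whose compositions induce the identity on homology. If $\alpha \in C(K_0)$ represents $(2x)^n$ at filtration $T_n(K_0)$, then $F_\Sigma(\alpha) \in C(K_1)$ represents the same homology class $(2x)^n$ (by $\Z[x]$-linearity, together with the fact that the induced homology map sends the canonical generator to a unit multiple of itself) at filtration at least $T_n(K_0)$. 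This yields $T_n(K_1) \geq T_n(K_0)$, and symmetry gives equality.

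Monotonicity $T_{i+1}(K) \geq T_i(K)$ is then essentially formal: in Bar-Natan's theory the action of $x$ on $C(K)$ has homological degree $0$, so multiplication by $2x$ preserves the filtration. Any cycle representing $(2x)^i$ at filtration $T_i(K)$ therefore yields, after multiplication by $2x$, a cycle representing $(2x)^{i+1}$ at the same filtration.

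The vanishing $T_i(K) = 0$ for $i \geq g_4(K)$ is the main obstacle and also drives the non-positivity. Fix a genus-$g$ cobordism $\bar\Sigma: U \to K$ from the unknot, with $g = g_4(K)$, and consider the induced filtered $\Z[x]$-linear chain map $F_{\bar\Sigma}: C(U) \to C(K)$. The standard genus-$g$ cobordism computation in Bar-Natan's theory should give that on the rank-one $\Z[x]$-module $H(C(U)) = \Z[x]$ this map sends $1$ to a unit multiple of $(2x)^g$. Since every homology class of $C(U)$ can be represented at filtration $0$ (the filtration being trivial on the homology of the unknot), one obtains, for each $i \geq g$, a filtration-$0$ cycle in $C(K)$ representing $(2x)^i$; hence $T_i(K) \geq 0$. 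Combined with the monotonicity, this forces $T_i(K) \leq T_g(K) = 0$ for $i \leq g$, so $T_i$ is non-positive throughout and equals $0$ for $i \geq g$. Finally, evenness is a parity argument on the homological gradings of cycles in the universal Khovanov complex of a knot that can represent $(2x)^n$, which should be handled by tracking the allowed parities for $K$ and noting that the boundary map changes them by even amounts once we restrict to cycles.
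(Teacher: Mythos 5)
Your arguments for concordance invariance and for the monotonicity $T_{i+1}(K) \geq T_i(K)$ are essentially the paper's, and your lower bound $T_i(K) \ge 0$ for $i \ge g_4(K)$, obtained from a genus-$g_4(K)$ cobordism from the unknot to $K$, is one of the two cobordism inequalities the paper applies. The gap is the missing upper bound. You assert $T_g(K) = 0$ and deduce non-positivity and vanishing from it, but you have only established $T_g(K) \ge 0$; nothing in your proof bounds any $T_i(K)$ from above, and without such an upper bound neither the non-positivity of $T_i(K)$ nor the vanishing $T_i(K) = 0$ for $i \ge g_4(K)$ follows. The step ``$T_i(K) \ge 0$ for $i \ge g$, together with monotonicity, forces $T_g(K) = 0$'' is simply not valid as stated. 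The paper closes this gap by also applying Proposition~\ref{orientedcobordismbound} in the direction you did not use: a genus-$g_4(K)$ cobordism from $K$ to the unknot $U$ induces a filtered map $f_{\Sigma}: C(K) \to C(U)$ acting on total homology by $(2x)^{g_4(K)}$, and since the total homology of $C(U)$ is represented entirely in filtration degree $0$, this gives $T_i(K) \le T_{i+g_4(K)}(U) = 0$ for all $i$. You need to add this cobordism in the reverse direction to complete the proof.

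A smaller remark: your account of evenness is only a gesture. The relevant fact is that, for a knot, the $E_2$ page of the spectral sequence of $C(K)$ (the $\mathcal{F}_3$ Khovanov homology) is supported in bidegrees whose internal plus filtration grading has a fixed even parity, so any surviving class of internal degree $-2i$ lies in even filtration degree and $T_i(K)$ is even. The paper's written proof of this theorem is also silent on the evenness check, but you should spell out the parity argument rather than leave it as a sketch.
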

These invariants have similar properties to, but are not identical with, the invariants $V_i$ from knot Floer homology \cite{ni2015cosmetic}. 

Many of the constructions in this paper should also work in the context of the more general Khovanov-Rozansky $\operatorname{sl}(n)$ and HOMFLY homologies. In that setting, an analogue of Theorem~\ref{orientedgenusbound} likely holds, but Theorem~\ref{nonorientablegenusbound} is unique to the $\operatorname{sl}(2)$ case since for $n > 2$ the $\operatorname{sl}(n)$ homology depends on the orientation of a link.

\subsection*{Acknowledgments}

I would like to thank Zolt\'an Szab\'o for helpful conversations and feedback on this paper.

\section{Preliminaries on matrix factorizations}

Throughout this paper, all rings and modules will be bigraded, with the first grading called the internal grading and the second grading called the filtration grading. All rings will be concentrated in negative, even internal grading and zero filtration grading. Given a module $M$ and integers $i,j$, write $M\{i,j\}$ for a copy of $M$ with the gradings shifted so that a homogeneous element of degree $(a,b)$ in $M$ has degree $(a+i,b+j)$ in $M\{i,j\}$, and $M\{i\}$ for $M\{i,0\}$. When tensoring and shifting the gradings of linear maps, the internal grading will be used in the Koszul sign rule. When relating the gradings here to the standard $q,h$ gradings on Khovanov homology, the internal grading is $q - 3h$ and the filtration grading is $h$.

\begin{defn}
Given a ring $R$ and homogeneous element $w \in R$ of degree $(2k,0)$ with $k$ odd, a matrix factorization of $w$ is an $R$-module $C$ together with a  $R$-linear map $d: C \to C$, homogeneous of degree $(k,0)$, such that $d^2 = w$.
\end{defn}
The restriction that $k$ be odd is only to avoid needing to introduce an extra $\Z/2\Z$ grading to track signs. In every matrix factorization considered here $k$ will be $-3$. 

In constructing the knot invariants, planar tangles will be assigned matrix factorizations as above but general tangles will be assigned a more flexible object in which the differential is only filtered with respect to the second grading. When $w = 0$, this object is called a multicomplex in \cite{livernet2020spectral}; the Postnikov systems from \cite{khovanov2007virtual} are also closely related.
\begin{defn}
Given $(R,w)$ as above, a multifactorization of $w$ is an $R$-module $C$ and a sequence of $R$-linear maps $d_i: C \to C$ for $i \ge 0$, homogeneous of degree $(k,i)$, such that the sum $D = \sum_{i = 0}^\infty d_i$ satisfies $D^2 = w$.
\end{defn}
If $(C,D)$ is a multifactorization, then $(C,d_0)$ is a matrix factorization called the vertical factorization of $C$. Viewing $C$ as a filtered matrix factorization, the vertical factorization is essentially the associated graded factorization.

\begin{defn}
Given two multifactorizations $(C,D)$ and $(C',D')$, a chain map $F$ between them is a sequence of $R$-linear maps $f_i: C \to C'$ for $i \ge 0$, homogeneous of degree $(0,i)$, such that the sum $F = \sum_{i = 0}^\infty f_i$ satisfies $FD = D'F$.
\end{defn}
Again, the first term $f_0$ of a chain map $F$ will be a chain map between the vertical factorizations.

\subsection{Homotopy equivalences of multifactorizations}

The factorizations associated to a tangle defined in the next section will not be filtered chain homotopy equivalent in the usual sense. Roughly, this is because it is $d_1$, not $d_0$, that gives the original Khovanov differential. Instead, we need a slightly weaker notion.
\begin{defn}
Given an integer $n$ and two chain maps of multifactorizations $F, G: (C,D) \to (C',D')$, an $n$-homotopy between them is a sequence of $R$-linear maps $h_i: C \to C'$ for $i \ge -n$, homogeneous of degree $(-k,i)$, such that the sum $H = \sum_{i = -n}^\infty h_i$ satisfies $F - G = HD + D'H$
\end{defn}
Note that $F$ and $G$ are still required to be filtered; only $H$ is permitted to lower the filtration degree by $n$. This means that picking $F$ and $H$ as above and defining $G = F - HD - D'H$ does not always result in a new chain map $n$-homotopic to $F$. Compositions of $n$-homotopic maps are still $n$-homotopic, so $n$-homotopy classes of chain maps between multifactorizations form a category. 

When $w = 0$, a multifactorization a filtered chain complex, so has an associated spectral sequence. The following fact will not be needed for the results here, but helps motivate the definition of an $n$-homotopy:
\begin{prop}
If $F$ and $G$ are $n$-homotopic maps between multifactorizations of $w = 0$, they induce the same map on the $E^r$ page of the associated spectral sequence for $r > n$.
\end{prop}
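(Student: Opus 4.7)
The plan is to unwind the standard presentation of the spectral sequence and carry out the usual filtration bookkeeping. Treat $(C, D)$ and $(C', D')$ as filtered chain complexes where $F^p$ denotes elements of filtration degree at least $p$; since each $d_i$ raises filtration by $i \ge 0$, this filtration is decreasing and preserved by $D$, $D'$, $F$, and $G$. With $Z^r_p = F^p \cap D^{-1}(F^{p+r})$ and the analogous $(Z')^r_p$ on the target, the $E^r$ page is
\[
E^r_p = \frac{Z^r_p}{Z^{r-1}_{p+1} + D(Z^{r-1}_{p-r+1})},
\]
with $d^r$ induced by the total differential.

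Pick $x \in Z^r_p$ with $r > n$ and apply the homotopy relation to get
\[
(F - G)(x) = H D(x) + D' H(x).
\]
The content of the proposition is that each summand lies in the corresponding piece of the denominator of $(E')^r_p$. For $H D(x)$: since $Dx \in F^{p+r}$ and $H$ decreases filtration by at most $n$, we have $H D(x) \in F^{p+r-n} \subset F^{p+1}$ because $r > n$, and $D'(H D(x)) = (F - G)(D(x)) \in F^{p+r}$ using that $D^2 = 0$ (here $w = 0$) and that $F - G$ is filtered. Hence $H D(x) \in (Z')^{r-1}_{p+1}$.

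For $D' H(x)$: the inequality $r > n$ also gives $p - n \ge p - r + 1$, so $H(x) \in F^{p-n} \subset F^{p-r+1}$. Rearranging the homotopy relation, $D' H(x) = (F - G)(x) - H D(x)$, and the right side lies in $F^p$ since both terms do. Thus $H(x) \in (Z')^{r-1}_{p-r+1}$, and $D' H(x) \in D'\bigl((Z')^{r-1}_{p-r+1}\bigr)$, the second piece of the denominator. Combining, $(F - G)(x)$ vanishes in $(E')^r_p$, proving $F_* = G_*$ on all pages $r > n$. The whole argument is a filtration bookkeeping exercise, directly generalizing the classical proof that filtration-preserving chain homotopies induce identical maps from $E^1$ on; the only real care required is lining up conventions so that the strict inequality $r > n$ is used twice in exactly the right places, and I expect no deeper obstacle.
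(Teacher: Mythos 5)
Your proof is correct, and the filtration bookkeeping is carried out carefully: the two uses of the strict inequality $r>n$ (to get $HD(x)\in F^{p+1}$ and $H(x)\in F^{p-r+1}$) are exactly the right places to invoke it, and the identities $D'(HD(x)) = (F-G)(Dx)$ and $D'H(x) = (F-G)(x) - HD(x)$ show that both terms land in the correct pieces $Z'^{r-1}_{p+1}$ and $D'\bigl(Z'^{r-1}_{p-r+1}\bigr)$ of the denominator of $E'^{r}_p$. For comparison: the paper does not prove this proposition at all — it is stated without proof and flagged as unused motivation for the definition of an $n$-homotopy — so there is no argument in the paper to compare against, but yours fills the gap correctly and is the standard argument generalizing the $n=0$ (filtered homotopy) case.
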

In particular, the $1$-homotopy equivalence classes of the factorizations defined here will be tangle invariants, so for a knot the $E^2$ page and all higher pages will be knot invariants. 

The main reason for working with multifactorizations instead of more flexible filtered factorizations in this paper is a version of the homological perturbation lemma, which lifts homotopy equivalences between vertical factorizations to homotopy equivalences between multifactorizations. Notation and proofs in the rest of this section are adapted from \cite{crainic2004perturbation}
\begin{defn}
Let $(C,D)$ and $(C',D')$ be multifactorizations. Then a special deformation retract from $C$ to $C'$ consists of chain maps $P: C \to C'$ and $I: C' \to C$ such that $PI = 1$, and a $0$-homotopy $H$ between $1$ and $IP$ such that $HI = 0$, $PH = 0$, and $H^2 = 0$. 
\end{defn}
\begin{prop}\label{perturb}
Let $(C,D)$ be a multifactorization of $w$, and suppose that we are given a matrix factorization $(C',d_0')$ of $w$ and maps $p_0,i_0,$ and $h_0$ forming a special deformation retract from the vertical factorization $(C,d_0)$ to $(C',d_0')$. Then if both $C$ and $C'$ are finitely supported in the filtration grading, $d_0'$ is the first term of a differential $D'$ making $(C',D')$ into a multifactorization and $p_0,i_0,$ and $h_0$ are the first terms in maps $P,I,$ and $H$ forming a special deformation retract from $(C,D)$ to $(C',D')$.
\end{prop}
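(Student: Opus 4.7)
The plan is to recognize this as the classical basic perturbation lemma from \cite{crainic2004perturbation}, adapted to the matrix-factorization setting; the goal is to apply the standard formulas and check that filtration finiteness makes the relevant series converge and that the relation $D^2 = w$ behaves formally like $d^2 = 0$. Write $\delta := D - d_0 = \sum_{i \ge 1} d_i$ for the perturbation of the vertical differential. Since each $d_i$ with $i \ge 1$ strictly raises the filtration grading while $p_0, i_0, h_0$ preserve it, the composites $h_0\delta$ and $\delta h_0$ are strictly filtration-increasing; combined with the hypothesis that $C$ is finitely supported in the filtration direction, this makes $(1 - h_0\delta)^{-1} = \sum_{n \ge 0}(h_0\delta)^n$ a finite sum when evaluated on any element, and similarly for $(1 - \delta h_0)^{-1}$.

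Setting $A := \delta(1 - h_0\delta)^{-1} = (1 - \delta h_0)^{-1}\delta$, I would define
\[
D' := d_0' + p_0 A i_0, \qquad P := p_0(1 - \delta h_0)^{-1}, \qquad I := (1 - h_0\delta)^{-1}i_0, \qquad H := (1 - h_0\delta)^{-1}h_0.
\]
Each is a finite sum whose filtration-degree-zero term is $d_0', p_0, i_0, h_0$ respectively, and all further terms raise the filtration grading by at least one; the internal bidegrees are correct termwise, using that $\delta$, $d_0'$ have internal degree $k$, that $h_0$ has internal degree $-k$, and that $p_0, i_0$ preserve internal degree. Checking that these define a multifactorization structure together with a special deformation retract amounts to verifying $D'^2 = w$, $PD = D'P$, $DI = ID'$, $PI = 1$, $PH = HI = H^2 = 0$, and $1 - IP = HD + DH$. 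The three side conditions pass from $h_0$ to $H$ (and yield $PH = HI = 0$) by a standard telescoping argument: every summand of $PH$, $HI$, or $H^2$ contains a factor $p_0 h_0$, $h_0 i_0$, or $h_0^2$, all of which vanish.

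The remaining identities reduce to the single algebraic input $d_0 \delta + \delta d_0 + \delta^2 = 0$, which follows immediately from $D^2 = w$ and $d_0^2 = w$ by expansion and is formally the same relation that appears in the $d^2 = 0$ case. Since $w$ is central and $R$-linear it commutes past every operator in the HPL manipulations, so the computations of \cite{crainic2004perturbation} apply verbatim; the only difference is that the target value of $D'^2$ is $w$ rather than $0$, and this $w$ is produced entirely by the leading term $(d_0')^2 = w$, with all perturbation contributions telescoping away. The main obstacle to spend care on when writing this out is precisely the $D'^2 = w$ computation, where one must verify that the perturbation terms $p_0 A i_0$ square to $-d_0'(p_0 A i_0) - (p_0 A i_0)d_0'$; this is where all of the SDR side conditions, the resolvent identity $A = \delta + \delta h_0 A$, and the relation above are combined.
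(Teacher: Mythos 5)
Your proposal is correct and takes essentially the same route as the paper: both invoke the basic perturbation lemma with the same operator $A = (1-\delta h_0)^{-1}\delta$ and the same resulting formulas for $D'$, $P$, $I$, $H$ (the paper writes $P = p_0 + p_0 A h_0$, etc., which expand to your resolvent expressions), with finiteness of filtration support guaranteeing the series terminate. The paper's proof is terser and simply asserts the verification, while you correctly flag that the only input replacing $d_0^2 = 0$ is $d_0\delta + \delta d_0 + \delta^2 = 0$, so the standard computations carry over unchanged.
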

\begin{proof}
Let $D_1 = D - d_0 = \sum_{i > i} d_i$, and let $A = \left( \sum_{i=0}^\infty (D_1 h_0)^i \right) D_1$, where this sum really is finite since $C$ has finite support in the filtration grading, $h_0$ preserves the filtration grading, and $D_1$ increases it. Then take $D' = d_0 + p_0Ai_0$, $P = p_0 + p_0 A h_0$, $I = i_0 + h_0 A i_0$, and finally $H = h_0 + h_0 A h_0$. These form a special deformation retract from $(C,D)$ to $(C',D')$. Since $A$ strictly increases the filtration grading, the components of $D',P,I,$ and $H$ that preserve the filtration grading are $d_0', p_0, i_0,$ and $h_0$, respectively. 
\end{proof}

Multifactorizations also allow for inductive, degree-by-degree modification of the differentials:
\begin{lemma}\label{dmodify}
Suppose that $(C,D)$ and $(C,D')$ are two multifactorizations of $w$ on the same underlying module with finite support in the filtration grading. If $d_i = d_i'$ for all $i < n$, then the difference $d_n - d_n'$ is a chain map ${(C,d_0) \to (C,d_0)\{-k\}}$. If this difference is furthermore $0$-homotopic to the $0$ map (on the vertical complex), then there is a third differential $D''$ such that $(C,D')$ and $(C,D'')$ are isomorphic multifactorizations and $d_i = d_i''$ for all $i < n+1$.
\end{lemma}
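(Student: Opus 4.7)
The approach has two independent parts: the chain-map claim is verified by expanding $D^2 = (D')^2 = w$ by filtration degree, and $D''$ is constructed by conjugating $D'$ by a module automorphism $1 + h$ built from the given homotopy. For the first part, the filtration-$m$ component of $D^2 = w$ reads $\sum_{i+j=m} d_i d_j = \delta_{m,0}\, w$, and likewise for $D'$. Subtracting these identities at $m = n$ and using $d_i = d_i'$ for $i < n$ cancels all the terms with $0 < i, j < n$, leaving
\[ d_0 (d_n - d_n') + (d_n - d_n') d_0 = 0. \]
Since $d_0$ and $d_n - d_n'$ both have odd internal degree $k$, this anticommutator vanishing is exactly the supergraded chain-map condition for a map $(C, d_0) \to (C, d_0)\{-k\}$.

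For the second part, by hypothesis there is an $R$-linear $h : C \to C$ of degree $(0, n)$ satisfying $d_n - d_n' = d_0 h - h d_0$ (the supergraded null-homotopy relation for an odd chain map against an even $h$). I set $\phi := 1 + h$; since $h$ strictly raises filtration degree and $C$ is finitely supported in the filtration grading, $h$ is nilpotent and $\phi$ is an $R$-module automorphism of $C$ with inverse $\sum_{m \ge 0} (-1)^m h^m$. Define $D'' := \phi^{-1} D' \phi$. Then $(D'')^2 = \phi^{-1} w \phi = w$, so $(C, D'')$ is a multifactorization of $w$ and $\phi$ exhibits the desired isomorphism $(C, D'') \to (C, D')$.

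To check that $d_i'' = d_i$ for $i \le n$, expand $D'' = \phi^{-1} D' \phi$ by filtration degree: the components of $\phi$ and $\phi^{-1}$ sit in filtration degrees $0, n, 2n, \ldots$, with the degree-$0$ component of each equal to the identity. For $i < n$, only the identity terms of $\phi^{\pm 1}$ contribute, so $d_i'' = d_i' = d_i$. For $i = n$, the contributions are $d_n'$ (from the degree-$n$ part of $D'$), $d_0 h$ (from the degree-$n$ part of $\phi$), and $-h d_0$ (from the degree-$n$ part of $\phi^{-1}$), which sum to $d_n' + (d_n - d_n') = d_n$ as required.

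The main obstacle is sign bookkeeping: confirming that the anticommutator vanishing extracted from $D^2 = (D')^2 = w$ is the correct supergraded chain-map condition for the odd map $d_n - d_n'$, and that the corresponding null-homotopy takes the supercommutator form $d_0 h - h d_0$ --- which is exactly the filtration-$n$ term produced by conjugation by $1 + h$.
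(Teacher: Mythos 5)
Your proof is correct and follows essentially the same route as the paper's: expand $D^2 = (D')^2 = w$ by filtration degree to get the anticommutator identity, then conjugate $D'$ by the nilpotent automorphism $1+h$. The one point worth flagging is that your null-homotopy relation $d_n - d_n' = d_0 h - h d_0$ has the opposite sign from what the paper's definition of a $0$-homotopy actually produces (which is $d_n - d_n' = h d_0 - d_0 h$, since the target differential on $(C,d_0)\{-k\}$ is $-d_0$); you've compensated by conjugating in the opposite direction ($\phi^{-1}D'\phi$ rather than the paper's $\phi D' \phi^{-1}$), so the two sign choices cancel and the computation of $d_n''$ lands correctly, but if you used the $h$ coming straight from the hypothesis you would need to replace it by $-h$.
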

\begin{proof}
The statement that the difference $d_n-d_n'$ is a chain map follows from the fact that, in the degree $n$ part of the squares of $D$ and $D'$, all terms $d_id_j$ with $0 < i,j < n$ are equal to the corresponding term $d_i'd_j'$, so since both of these squares are zero $d_0d_n + d_nd_0 = d_0'd_n' + d_n'd_0'$. Rearranging this equation and using the fact that $d_0 = d_0'$ gives that $d_0(d_n-d_n') = -(d_n-d_n')d_0$. Since $k$ is odd, the differential on $(C,d_0)\{-k\}$ is $-d_0$, so  $d_n-d_n'$ is a chain map $(C,d_0) \to (C,d_0)\{-k\}$.

Now, suppose that $h$ is a $0$-nullhomotopy of $d_n-d_n'$, so $h$ is a homogeneous map of degree $(0,n)$ satisfying $hd_0 - d_0h = d_n - d_n'$. In principle $h$ could have terms of other degrees, but since both $d_0$ and $d_n-d_n'$ are homogeneous these can be taken to be zero. The above formula has the term $-d_0h$ instead of $d_0h$ the differential on the target $(C,d_0)\{-k\}$ is $-d_0$.

Let $D''$ be defined by the infinite sum
\begin{equation*}
D'' = \sum_{j=0}^\infty (D'+hD')(-h)^j
\end{equation*}
Since $h$ raises the filtration degree by $n$, only finitely many terms of this sum contribute to each $d_i''$. For $i < n$, only the term $D'$ contributes, so $d_i'' = d_i' = d_i$. For $i = n$, $d_n'' = d_n' + hd_0' - d_0'h = d_n$, as desired. Finally, $1 + h$ is a chain map $(C,D') \to (C,D'')$ that is upper unitriangular with respect to a filtered basis, so is invertible. 
\end{proof}

\subsection{Koszul factorizations and maps between them}

In the factorizations associated to tangles, the vertical factorizations will be Koszul matrix factorizations as defined in \cite{khovanov2004matrix}. Given homogeneous elements $a,b \in R$ of degrees $2i,2j$ with $ab = w$, the length-$1$ matrix factorization $K(a,b)$ is defined by this diagram:
\begin{equation}
R\{i-j\} \xrightarrow{a} R \xrightarrow{b}
\end{equation}
where, to make diagrams less cluttered, arrows pointing to the right edge of the diagram represent maps into the object at the left edge in the same row. More generally, given sequences $\mathbf{a} = a_1,\dots,a_n$ and $\mathbf{b} = b_1,\dots,b_n$, the length-$n$ Koszul factorization is defined by a tensor product
\begin{equation}
\Ksm{a}{b} = \bigotimes_{i = 1}^n K(a_i,b_i)
\end{equation}

When simplifying tangle multifactorizations using Proposition~\ref{perturb}, we need a supply of homotopy equivalences of Koszul factorizations. These are all adapted from 2.2-2.3 of \cite{khovanov2007virtual}.
First, a change of basis in a Koszul factorization often results in another Koszul factorization:
\begin{prop}[2.34 and 2.35 from \cite{khovanov2007virtual}] \label{koszulbasischange}
Suppose that either $$(a_1',\dots,a_i',\dots,a_j',\dots) = (a_1,\dots,a_i,\dots,a_j+\lambda a_i,\dots)$$ and $$(b_1',\dots,b_i',\dots,b_j',\dots) = (b_1,\dots,b_i-\lambda b_j,\dots,b_j,\dots)$$
or $(a_i') = (a_i)$ and $$(b_1',\dots,b_i',\dots,b_j',\dots) = (b_1,\dots,b_i-\lambda a_j,\dots,b_j + \lambda a_i)$$ for some $\lambda \in R$. Then $K((a_i),(b_i))$ and $K((a_i'),(b_i')$ are isomorphic. 
\end{prop}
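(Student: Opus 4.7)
The plan is to exhibit explicit $R$-module isomorphisms intertwining the two differentials, using the standard fermionic description of a Koszul factorization. Identify the underlying module of $K((a_i),(b_i))$ with the exterior algebra $\Lambda_R[\theta_1,\dots,\theta_n]$, and let $\theta_k^+$ denote wedging with $\theta_k$ on the left and $\theta_k^-$ the corresponding contraction. These operators satisfy $\theta_k^+\theta_l^-+\theta_l^-\theta_k^+ = \delta_{kl}$, together with the obvious anticommutation among like-signed operators. In this language the Koszul differential reads $D = \sum_k (a_k\theta_k^+ + b_k\theta_k^-)$, and in each of the two cases the target differential $D'$ differs from $D$ by a single homogeneous term supported on the two modified indices.

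For the first change of basis, where $a_j' = a_j + \lambda a_i$ and $b_i' = b_i - \lambda b_j$, one has $D' - D = \lambda a_i \theta_j^+ - \lambda b_j \theta_i^-$, and I would take
\begin{equation*}
\phi := 1 + \lambda\, \theta_j^+\theta_i^-.
\end{equation*}
The identity $\phi D = D'\phi$ is then a routine calculation: $\theta_j^+\theta_i^-$ is an even operator that anticommutes twice (hence commutes) with each $\theta_k^{\pm}$ for $k \ne i,j$, so only the $k=i$ and $k=j$ summands of $D$ contribute to the commutator $[\theta_j^+\theta_i^-, D]$. The canonical anticommutation relation at index $i$ produces a residual $a_i \theta_j^+$, and the analogous relation at index $j$ produces $-b_j \theta_i^-$, matching $D' - D$ exactly. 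Since $(\theta_j^+\theta_i^-)^2 = 0$, the map $\phi$ has inverse $1 - \lambda\, \theta_j^+\theta_i^-$, and it is grading-preserving because the homogeneity of $a_j + \lambda a_i$ forces $\lambda$ to have the appropriate internal degree. For the second change of basis the same strategy applies with
\begin{equation*}
\phi := 1 + \lambda\, \theta_j^-\theta_i^-,
\end{equation*}
and the parallel CAR computation yields $[\theta_j^-\theta_i^-, D] = a_i \theta_j^- - a_j \theta_i^-$, again matching $D' - D$.

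The only nontrivial step is guessing the correct intertwining operators; once they are written down, the verification is a uniform computation in the fermion algebra generated by the $\theta_k^{\pm}$. A more pedestrian alternative would be to tensor the factor $K(a_i,b_i) \otimes K(a_j,b_j)$ with the identity on the remaining length-$1$ pieces and write $\phi$ out as a $4\times 4$ matrix in the corresponding basis; the fermionic presentation above is the coordinate-free packaging of exactly this calculation, and makes both the chain map check and the invertibility transparent.
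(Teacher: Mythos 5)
The paper does not prove this proposition---it is quoted as Propositions 2.34 and 2.35 from \cite{khovanov2007virtual}, so there is no in-paper argument to compare against. Your proof is correct and is a standard way to establish the result: exhibit an explicit invertible intertwiner $\phi = 1 + \lambda N$ where $N$ is a nilpotent, degree-$(0,0)$, even operator quadratic in the creation/annihilation operators, and verify $\phi D = D'\phi$ by computing $[N,D]$ term by term using the canonical anticommutation relations and checking that $(D'-D)N = 0$. I verified the two commutator computations and they come out as you claim: $[\theta_j^+\theta_i^-, D] = a_i\theta_j^+ - b_j\theta_i^-$ and $[\theta_j^-\theta_i^-, D] = a_i\theta_j^- - a_j\theta_i^-$, so $\phi D - D'\phi = -\lambda(D'-D)N = 0$ in each case.

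One small caveat worth flagging: with the paper's convention $d(e)=af$, $d(f)=be$ (see the proof of Lemma~\ref{koszulhomot}), the natural identification $e\leftrightarrow\theta$, $f\leftrightarrow 1$ gives $D = \sum_k(a_k\theta_k^- + b_k\theta_k^+)$, i.e., $a$ pairs with contraction rather than wedging as you wrote. Your formulas are correct for your stated convention (which corresponds to identifying $f\leftrightarrow\theta$, $e\leftrightarrow 1$), but if one wants to match the paper's grading conventions directly, the intertwiners become $1+\lambda\theta_j^-\theta_i^+$ and $1+\lambda\theta_j^+\theta_i^+$ respectively, by the same computation with $+$ and $-$ swapped. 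This is purely cosmetic and does not affect the validity of the argument. You should also confirm (as you gesture at) that $\phi$ is homogeneous of degree $(0,0)$: since the Koszul factorization requires each $a_kb_k$ to have the same degree $2\deg w$, the degree constraint on $\lambda$ coming from homogeneity of $a_j'$ matches the one coming from homogeneity of $b_i'$, and both force $\lambda N$ to have degree zero.
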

Second, we will need two special cases of Theorem 2.2 from \cite{khovanov2007virtual}:
\begin{prop}\label{linex}
Suppose that $R = R_0[x]$ is a polynomial ring in which $x$ has degree $-2$, and that $C=\Ksm{a}{b}$ is a Koszul factorization in which $a_n = x$. Then, as a factorization over $R_0$, $C$ has a special deformation retract to the factorization $\Ksm{\bar{a}}{\bar{b}}$, where
\begin{align*}
\mathbf{\bar{a}} &= \pi(a_1),\dots,\pi(a_{n-1}) \\
\mathbf{\bar{b}} &= \pi(b_1),\dots,\pi(b_{n-1})
\end{align*}
and $\pi: R \to R_0$ is the homomorphism sending $x$ to $0$.
\end{prop}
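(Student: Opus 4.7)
The approach I would take is a Gaussian elimination that cancels the multiplication-by-$x$ component of $d_C$ coming from the last Koszul factor, lifted to the full differential via the homological perturbation lemma (Proposition \ref{perturb}).

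First I would write $C = C' \otimes K(x, b_n)$ with $C' = K((a_i)_{i<n}, (b_i)_{i<n})$, and use the splitting $R_0[x] = R_0 \oplus xR_0[x]$ to define $R_0$-linear maps $\iota : \bar{C} \hookrightarrow C'$ (the constant-in-$x$ section), $\pi : C' \to \bar{C}$ (set $x = 0$), and $\delta : C' \to C'$ (``division by $x$,'' satisfying $\delta\iota = 0$ and $\delta x = \mathrm{id}$). I would then filter $C$ over $R_0$ by placing $x^k \iota(\bar{c}) \otimes e_j$ at filtration grading $k + (1 - j)$, so that $e_0$ carries an extra unit; under this filtration $(C, d_C)$ becomes a multifactorization of $w$ over $R_0$ whose vertical differential $d_0$ is the $R_0[x]$-linear extension of $d_{\bar{C}}$ acting coordinate-wise, plus the $e_0 \to e_1$ multiplication-by-$x$ component, with $d_0^2 = w$ verified by a direct calculation.

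For the base special deformation retract from $(C, d_0)$ down to $\bar{C}$ I would take
\[
I_0(\bar{c}) = \iota(\bar{c}) \otimes e_1, \qquad P_0(c_0 \otimes e_0 + c_1 \otimes e_1) = \pi(c_1), \qquad H_0(c_0 \otimes e_0 + c_1 \otimes e_1) = \pm \delta(c_1) \otimes e_0,
\]
with the sign in $H_0$ supplied by the Koszul rule. The side conditions $P_0 I_0 = \mathrm{id}$, $H_0^2 = 0$, $P_0 H_0 = 0$, and $H_0 I_0 = 0$ are immediate from $\pi\iota = \mathrm{id}$ and $\delta\iota = 0$, while the chain-map and homotopy identities reduce to $x\delta = \mathrm{id} - \iota\pi$ together with the commutation of $\delta$ with the $R_0[x]$-linear lift of $d_{\bar{C}}$.

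Finally Proposition \ref{perturb} would lift this to a special deformation retract between $(C, d_C)$ and $(\bar{C}, D')$, where $D' = d_{\bar{C}} + P_0 A I_0$ for the Neumann series $A = \sum_{i \ge 0} (D_1 H_0)^i D_1$ with $D_1 = d_C - d_0$. The main obstacle is verifying $P_0 A I_0 = 0$, so that $D'$ really equals $d_{\bar{C}}$ on the nose and the target is the claimed Koszul factorization $K(\bar{\mathbf{a}}, \bar{\mathbf{b}})$. What I expect to make this work is that the only $e_0 \to e_1$ component of $d_C$ is the multiplication-by-$x$ already placed inside $d_0$, so $D_1$ preserves the $e_0$-coordinate while $H_0$ annihilates it; hence $H_0 D_1$ vanishes on $e_0$-coordinates, the Neumann series truncates after one step, and inspection of $A I_0(\bar{c}) = D_1 I_0(\bar{c}) + D_1 H_0 D_1 I_0(\bar{c})$ should confirm that every surviving summand lies either in the $e_0$-coordinate or at filtration grading $\ge 1$ in the $e_1$-coordinate---both annihilated by $P_0$, which only detects $\iota(\bar{C}) \otimes e_1$ at filtration $0$.
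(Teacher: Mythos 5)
The paper does not prove this proposition; it simply cites it as a special case of Theorem 2.2 of Khovanov--Rozansky \cite{khovanov2007virtual}, so there is no internal proof to compare your argument against. Your approach --- factor out $K(x, b_n)$, build the explicit contraction $(P_0, I_0, H_0)$ over $R_0$ using the splitting $R_0[x] = R_0 \oplus x R_0[x]$, then perturb --- is essentially sound, and I verified that the side conditions, the homotopy identity (which reduces to $[\delta, d_{\bar{C}}^{\mathrm{ext}}] = 0$ together with $\iota\pi + x\delta = \mathrm{id}$), and the vanishing $P_0 A I_0 = 0$ all check out with consistent Koszul signs. The observation that $H_0 D_1$ kills the $e_0$ coordinate, forcing $(H_0 D_1)^2 = 0$, is exactly the right mechanism.

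The one genuine wrinkle is the invocation of Proposition \ref{perturb} itself: your auxiliary filtration places $x^k \iota(\bar c) \otimes e_j$ in filtration degree $k + (1-j)$, which is unbounded in $k$, so $C$ is \emph{not} finitely supported in the filtration grading, and the hypothesis of Proposition \ref{perturb} as stated is violated. This is not fatal --- the point of that hypothesis is only to guarantee convergence of the Neumann series $A = \sum_i (D_1 H_0)^i D_1$, and you have already shown the series terminates after one step because $(H_0 D_1)^2 = 0$ --- but you cite Proposition \ref{perturb} without noting that its hypothesis fails and why the conclusion nevertheless holds. A cleaner route, given that the series is so short, is to skip the perturbation lemma entirely, write down the explicit perturbed maps $I = I_0 + H_0 D_1 I_0$, $P = P_0$, $H = H_0$, and check the retraction identities for $(C, d_C)$ and $(\bar C, d_{\bar C})$ directly. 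Either way the argument goes through, but the citation as written is a gap that needs the truncation observation moved up front to patch it.
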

\begin{prop}\label{sqex}
Suppose that $R = R_0[x]$ is a polynomial ring in which $x$ has degree $-2$, and that $C=\Ksm{a}{b}$ is a Koszul factorization in which $a_n = 0$, ${b_n = x^2 + h}$ for some $h \in R_0$ of degree $-4$, and that all other $a_i$ and $b_i$ lie in $R_0$. Then, as a factorization over $R_0$, $C$ has a special deformation retract to the sum of factorizations $\Ksm{\bar{a}}{\bar{b}}\{-1,0\} \oplus \Ksm{\bar{a}}{\bar{b}}\{1,0\}$ where $\mathbf{\bar{a}}$ and $\mathbf{\bar{b}}$ are as above.
\end{prop}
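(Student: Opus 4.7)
The plan is to peel off the Koszul factor $K(0, x^2+h)$ from the rest of the factorization, construct the deformation retract there explicitly, and then tensor it back in.

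First I would observe that, because $a_1,\ldots,a_{n-1}$ and $b_1,\ldots,b_{n-1}$ all lie in $R_0$, the factorization $C$ splits as a tensor product of $R_0$-matrix factorizations,
\begin{equation*}
C \;\cong\; \Ksm{\bar a}{\bar b} \otimes_{R_0} K(0, x^2+h),
\end{equation*}
where the first tensor factor is formed over $R_0$ (a matrix factorization of $w = \sum_{i<n} a_i b_i$) and the second is formed over $R = R_0[x]$ (a matrix factorization of $0$), regarded as an $R_0$-module. Given a special deformation retract of $K(0, x^2+h)$ onto the target $R_0\{1\} \oplus R_0\{-1\}$ with zero differential, tensoring with the trivial retract on $\Ksm{\bar a}{\bar b}$ produces the desired retract of $C$ — the only nontrivial check, the homotopy identity $dH + Hd = 1 - IP$, reduces to the corresponding identity for the length-one factor once the internal-degree Koszul signs introduced by $d_1 \otimes 1$ and $1 \otimes H_2$ cancel against each other.

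The length-one factor $K(0, x^2+h)$ has underlying module $R\{1\} \oplus R$ whose only nonzero differential sends $(u,v) \mapsto ((x^2+h)\,v,\,0)$. Polynomial division writes every $f \in R_0[x]$ uniquely as $f = (x^2+h)\,q(f) + r(f)$ with $\deg r(f) < 2$, and I would define
\begin{equation*}
I(\lambda, \mu) = (\lambda + \mu x,\,0), \qquad P(u,v) = (\text{coefficients of } 1, x \text{ in } r(u)), \qquad H(u,v) = (0,\,q(u)).
\end{equation*}
Each side condition $PI = 1$, $HI = 0$, $PH = 0$, $H^2 = 0$ is immediate from the uniqueness of $(q,r)$, and $dH + Hd = 1 - IP$ amounts to $q((x^2+h)\,v) = v$ on $R$ together with $(x^2+h)\,q(u) = u - r(u)$ on $R\{1\}$. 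The gradings are consistent: the elements $1$ and $x$ sit in internal degrees $1$ and $-1$ of $R\{1\}$, which accounts for the two summands of the target, and because $x^2+h$ has internal degree $-4$, the map $q$ raises internal degree by $3 = -k$, giving $H$ the required bidegree.

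The main obstacle is just bookkeeping. One has to be careful that the tensor splitting in the first step really produces a matrix factorization over $R_0$ (which uses that $w$ itself lies in $R_0$, since $a_n b_n = 0$), and one has to track the shift on $R\{1\}$ closely enough to identify $I(\{1,x\})$ with $R_0\{1\} \oplus R_0\{-1\}$ rather than some mistaken neighbor. Once these are set up, every verification reduces to the uniqueness of polynomial division by $x^2+h$ in $R_0[x]$, and no additional homological machinery is required.
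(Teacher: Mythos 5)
Your proposal is correct, and the verification of every special-deformation-retract identity from the uniqueness of division by $x^2+h$ goes through exactly as you sketch; I checked the grading bookkeeping, the side conditions $PI=1$, $HI=0$, $PH=0$, $H^2=0$, and the homotopy identity $dH+Hd=1-IP$, and the internal-degree Koszul sign cancellation between $d \otimes 1$ and $1 \otimes H$ in the tensor step works because the differential and the homotopy both have odd internal degree. The one point worth spelling out is that the diagram for $K(0,x^2+h)$ really is $R\{1\}\xrightarrow{0}R\xrightarrow{x^2+h}$, since $a_n=0$ must be treated as having degree $2i=-2$ to make the total potential degree $-6$; then the elements $1,x$ of $R\{1\}$ sit in internal degrees $1,-1$, matching the claimed target $R_0\{1\}\oplus R_0\{-1\}$ and hence the two summands $K(\bar a,\bar b)\{\pm 1\}$ after tensoring. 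The paper itself does not prove this statement; it cites Theorem 2.2 of Khovanov's virtual-crossing paper as the source of both Proposition \ref{linex} and Proposition \ref{sqex}. Your argument is the natural self-contained proof of this special case, and it is essentially what that general excluded-variable theorem reduces to here, so there is no tension between the two: you have simply supplied the explicit SDR that the citation leaves implicit.
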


Finally, the following is a key source of homotopies between endomorphisms of Koszul complexes:
\begin{lemma}\label{koszulhomot}
Each $a_i$ and $b_i$ acts nullhomotopically on $K((a_i),(b_i))$.
\end{lemma}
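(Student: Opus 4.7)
The plan is to handle the length-$1$ case by an explicit construction of a nullhomotopy and then extend to the general case using the tensor product structure of $\Ksm{a}{b}$.

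For a single Koszul factor $K(a,b)$, I would write the differential in matrix form as
\begin{equation*}
d = \begin{pmatrix} 0 & b \\ a & 0 \end{pmatrix}
\end{equation*}
on $R\{i-j\} \oplus R$ and check by a direct computation that
\begin{equation*}
h_a = \begin{pmatrix} 0 & 1 \\ 0 & 0 \end{pmatrix}, \qquad h_b = \begin{pmatrix} 0 & 0 \\ 1 & 0 \end{pmatrix}
\end{equation*}
satisfy $dh_a + h_a d = a \cdot \mathrm{id}$ and $dh_b + h_b d = b \cdot \mathrm{id}$. Because $a$ and $b$ have even internal degree and $k$ is odd, the shifts built into the definition of $K(a,b)$ make $h_a$ and $h_b$ homogeneous of odd internal degree, so they are legitimate candidates for homotopies.

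For the general case $\Ksm{a}{b} = \bigotimes_{j=1}^n K(a_j,b_j)$, I would extend the length-$1$ homotopy by identity on the other factors: set $H_i^a = 1 \otimes \cdots \otimes h_{a_i} \otimes \cdots \otimes 1$ with $h_{a_i}$ in the $i$-th slot. Writing the total differential as $D = \sum_j D_j$, where $D_j$ is the canonical extension of $d_j$ to the tensor product, the Koszul sign rule applied with the internal grading forces $D_j H_i^a + H_i^a D_j = 0$ for $j \ne i$, since both $d_j$ and $h_{a_i}$ then have odd internal degree and act on disjoint tensor factors. The $j = i$ term reduces, by the length-$1$ computation, to multiplication by $a_i$ on the $i$-th factor and hence to $a_i \cdot \mathrm{id}$ on the whole tensor product. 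Summing gives $DH_i^a + H_i^a D = a_i \cdot \mathrm{id}$, and replacing $h_{a_i}$ by $h_{b_i}$ handles $b_i$.

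There is no real obstacle; the only step that requires care is the sign bookkeeping in the tensor product, and this works cleanly exactly because of the standing hypothesis that $k$ is odd, which makes both $d$ and the local homotopies odd in the internal grading so that the cross-terms anticommute and cancel.
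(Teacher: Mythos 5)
Your proposal is correct and follows essentially the same route as the paper: reduce to the length-$1$ Koszul factor via the tensor product structure, then exhibit the explicit homotopies $h_a,h_b$. The paper states the reduction without spelling out the Koszul-sign cancellation of the cross-terms $D_j H_i + H_i D_j$ for $j \ne i$, which you verify; that is a nice bit of extra care but not a different argument.
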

\begin{proof}
Since $K((a_i),(b_i))$ is a tensor product of length-$1$ Koszul factorizations, it suffices to show that multiplication by $a$ or $b$ is nullhomotopic on the factorization
\begin{equation*}
R\{i-j\} \xrightarrow{a} R \xrightarrow{b}
\end{equation*}
Let $e$ and $f$ be generators of the two copies of $R$ in the above factorization, so ${d(e) = af}$ and ${d(f) = be}$. Then the map $h_a$ sending $f$ to $e$ and $e$ to $0$ is a nullhomotopy of $a$, and similarly the map $h_b$ sending $e$ to $f$ and $f$ to $0$ is a nullhomotopy of $b$.
\end{proof}

\section{Defining the complex}

We are now ready to define a multifactorization $C(D)$ associated to any tangle diagram with a few additional decorations. The constructions in this section are essentially identical to the $n=2$ case of \cite{rasmussen2016some}, with the main differences being working over $\Z$ instead of $\Q$ and defining the underlying ring in a way that is better adapted to unoriented tangles.

For the purposes of this section, a decorated tangle diagram $D$ consists of a tangle diagram $D_0$ together certain extra data. First, add any number of disjoint dotted arcs meeting $D_0$ only in their endpoints, requiring that the union of these arcs and $D_0$ is a connected diagram. Then label each region of the complement of $D_0$ and these arcs with a variable $x_1,\dots,x_n$ and, near each arc and each crossing of $D_0$, put a mark on one of the four adjacent edges of $D_0$.

For a decorated tangle diagram $D$, let $R(D)$ be the subring of the polynomial ring $\Z[x_1,\dots,x_n]$ over the variables assigned to regions generated by all differences $x_i-x_j$. If $e$ is an oriented edge of $D_0$, let $x_e$ be the difference $x_\ell - x_r$ where $x_\ell$ and $x_r$ are the labels of the regions to the left and right of $e$. Note that the $x_e$ give an isomorphism between $R(D)$ and the ``edge ring" from \cite{rasmussen2016some}.

The multifactorization $C(D)$ will be a matrix factorization over $R(D)$ with potential
\begin{equation}
w = \frac13 \sum_{e \in \partial D} x_e^3,
\end{equation}
where $\partial D$ is the set of edges meeting the boundary of $D$. Since $\sum_{e \in \partial D} x_e = 0$, the potential $w$ has integer coefficients despite the prefactor of $\frac13$.

\begin{figure}[h]
  \centering
  \def\svgwidth{\columnwidth}
  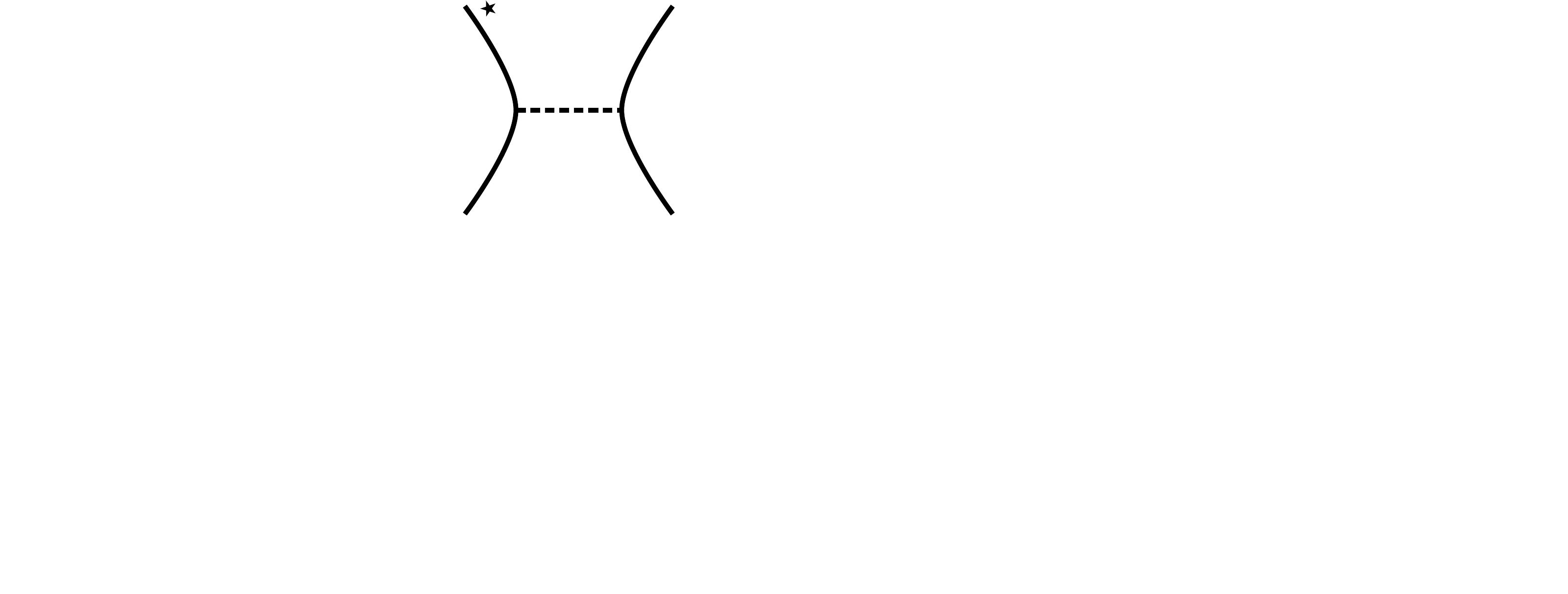
  \caption{The four basic tangles, with the marked edges labeled by $\bigstar$.}\label{basics}
\end{figure}

Consider the four decorated tangle diagrams $D_0,D_1,D_+$ and $D_-$ in Figure~\ref{basics}. The ring $R(D)$ and potential $w$ are the same in all three diagrams, and $w$ can be calculated as
\begin{align*}
w &= \frac13 \left[ (x_0-x_1)^3 + (x_1-x_2)^3 + (x_2-x_3)^3 + (x_3-x_0)^3 \right] \\
&= x_0x_1^2 - x_0^2x_1 + x_1x_2^2 - x_1^2x_2 + x_2x_3^2 - x_2^2x_3 + x_3x_0^2 - x_3^2x_0 \\
&= (x_0-x_2)(x_1^2-x_3^2) - (x_0^2-x_2^2)(x_1-x_3) \\
&= (x_0-x_2)(x_1-x_3)(x_1-x_2+x_3-x_0)
\end{align*}

For $D_0$ and $D_1$, the associated factorizations are Koszul:
\begin{align}
C(D_0) &= K(x_0-x_2,(x_1-x_3)(x_1-x_2+x_3-x_0)) \\
C(D_1) &= K(x_1-x_3,(x_0-x_2)(x_1-x_2+x_3-x_0))
\end{align}

The factorizations for $D_+$ and $D_-$ can then be defined as cones between $C(D_0)$ and $C(D_1)$. In this diagram, the diagonal maps describe a chain map ${s_{D_0 \to D_1}: D_0 \to D_1\{1\}}$.
\begin{equation}\label{elementarysaddle}
\xymatrix@C=100pt@R=40pt{
R\{1\} \ar[r]^{x_0-x_2} \ar[dr]^{-1} & R \ar[r]^{(x_1-x_3)(x_1-x_2+x_3-x_0)} \ar[dr]^{(x_1-x_2+x_3-x_0)} & \\
R\{2\} \ar[r]_{-(x_1-x_3)} & R\{1\} \ar[r]_{-(x_0-x_2)(x_1-x_2+x_3-x_0)} &
}
\end{equation}
Permuting the variables in the above diagram gives another chain map ${s_{D_1 \to D_0}: D_1 \to D_0\{1\}}$.

For $D_+$, the vertical factorization of $C(D_+)$ is $C(D_0)\{1\} \oplus C(D_1)\{-1,1\}$, the differential $d_1$ is given by $s_{D_0 \to D_1}$, and there are no higher differentials. Similarly, the vertical factorization of $C(D_-)$ is $C(D_1)\{1\} \oplus C(D_0)\{-1,1\}$ and $d_1$ is given by $s_{D_1 \to D_0}$. 

For general decorated tangle diagrams $D$, the multifactorization $C(D)$ is a tensor product of these elementary factorizations. Specifically, say that an elementary subdiagram of $D$ is a subdiagram $D_e$ containing at most one arc or crossing. Every elementary subdiagram is equivalent to one of $D_0,D_1,D_+,$ or $D_-$ up to relabeling the variables, so we can define
\begin{equation}
C(D) = \bigotimes_{D_e \subset D \text{ elementary}} C(D_e) \otimes_{R(D_e)} R(D)
\end{equation}

\subsection{Invariance of $C(D)$}

Since the complex $C(D)$ is essentially identical to the $n=2$ case of the complexes in \cite{rasmussen2016some}, existing proofs that Khovanov homology is a knot invariant go most of the way to proving that $C(D)$ is a tangle invariant. The only additional step needed is to go from isomorphisms on the $E_2$ page of a spectral sequence to honest $1$-homotopy equivalences of multifactorizations. This amounts to checking that these isomorphisms commute with higher differentials, which is automatic for the first and second Reidemeister moves and nearly so for the third. Nevertheless, we will write out the proof in detail to keep this paper self contained.

The main result will be the following:
\begin{defn}
For a tangle $T$, the boundary ring $R(\partial T)$ is the subring of $R(D)$ generated by the variables associated to inward-pointing edges meeting the boundary. This is isomorphic to the quotient of a polynomial ring on the endpoints by the sum of the endpoints.
\end{defn}
\begin{defn}
For a relatively oriented tangle $T$ and decorated diagram $D$ of $T$, let $C(T)$ be the shift $C(D)\{0,-n_-\}$ with scalars restricted to $R(T)$, where $n_-$ is the number of negative crossings in $D$.
\end{defn}
\begin{prop}\label{tangleinvariant}
Up to $1$-homotopy equivalence, $C(T)$ is an invariant of $T$.
\end{prop}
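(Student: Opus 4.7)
The plan is to verify invariance of $C(T)$ under each kind of move that relates two decorated diagrams of the same relatively oriented tangle: region relabeling and planar isotopy (which are tautological at the level of $R(D)$-modules), repositioning the marks on edges, adding or removing dotted arcs, and the three Reidemeister moves. For the decoration moves, the effect on $C(D)$ is to attach or reshape an elementary Koszul factorization containing a distinguished variable that appears in only one of its $a_i$ or $b_i$. Applying Proposition~\ref{linex} or Proposition~\ref{sqex}, possibly after a change of basis via Proposition~\ref{koszulbasischange}, provides a special deformation retract on the vertical factorization that eliminates this variable, and Proposition~\ref{perturb} then lifts it to a $1$-homotopy equivalence of the full multifactorizations.

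For each Reidemeister move I would follow the same three-step scheme. First, imitate the classical proof of Khovanov's invariance, or equivalently specialize the $n=2$ case of the argument in \cite{rasmussen2016some}, to produce on the vertical factorization $(C(D),d_0)$ an explicit Gaussian-elimination style special deformation retract onto the vertical factorization of the simplified diagram. Second, apply Proposition~\ref{perturb} to lift this to a $1$-homotopy equivalence between $(C(D),D)$ and some multifactorization $(C(D'),\widetilde{D})$ on the underlying module of the simplified diagram. Third, compare $\widetilde{D}$ to the standard differential $D'$ produced by applying the construction directly to the simplified diagram, and correct the discrepancy degree by degree in the filtration grading using Lemma~\ref{dmodify}, which requires that the difference in each filtration degree be a $0$-nullhomotopic chain map of vertical factorizations.

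The main obstacle is the third step for the R3 move. For R1 and R2, the perturbation-lemma output already matches the standard differential in each filtration degree on the nose, as the relevant Koszul cancellations are sufficiently rigid, so no correction is needed. For R3, one expects a nonzero difference at filtration degree $2$, arising from the higher-order term in the braid relation, and one must exhibit a $0$-nullhomotopy of it on the vertical complex. Here Lemma~\ref{koszulhomot} is decisive: the vertical factorizations in question are tensor products of length-$1$ Koszul factors, so multiplication by each $a_i$ or $b_i$ is nullhomotopic, and this suffices to nullhomotope any chain endomorphism that can be written as a matrix of polynomials in these multiplication operators. Lemma~\ref{dmodify} then allows us to absorb the residual term and bring $\widetilde{D}$ into agreement with $D'$, completing the proof.
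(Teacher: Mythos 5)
There is a structural gap in steps 1--2 of your Reidemeister scheme. A special deformation retract of vertical factorizations, lifted by Proposition~\ref{perturb}, can only ever produce a $0$-homotopy equivalence of multifactorizations; but the decisive cancellations in R1 and R2 happen at the $d_1$ level, not the $d_0$ level, and are inherently $1$-homotopies. Concretely, for positive R1 the Koszul simplifications (Propositions~\ref{arceliminate} and \ref{loopeliminate}, which themselves are the vertical-SDR-plus-perturbation lemma applications) bring the complex to $\Z[x]\{2\} \oplus \Z[x] \oplus \Z[x]\{-1,1\}$ with \emph{zero} vertical differential $d_0$ and $d_1$ given by $(1,x)$, while the target is $\Z[x]$ with zero differential. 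There is no special deformation retract between free modules of ranks $3$ and $1$ with zero differential, so step 1 of your scheme cannot be carried out with ``the simplified diagram'' taken to be the crossingless strand. The cancellation of $\Z[x]\{2\}$ against $\Z[x]\{-1,1\}$ along the $d_1$-entry $1$ needs an inverse homotopy of filtration degree $-1$, i.e.\ an honest $1$-homotopy equivalence, which Proposition~\ref{perturb} does not supply. The paper's actual argument therefore proceeds in two distinct phases: first $0$-homotopy simplifications via Propositions~\ref{arceliminate}, \ref{slideswitch}, \ref{loopeliminate} (these are the SDR/perturbation steps), then a direct $1$-homotopy Gaussian elimination along the $d_1$-level identity maps. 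The same remark applies to R2, where the final cancellation of the two middle summands in Figure~\ref{R2} is again a $1$-homotopy.

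There is also a secondary mismatch in your step 3 for R3. The paper does not use Lemma~\ref{koszulhomot} to kill the residual $d_2$ discrepancy. It establishes Proposition~\ref{homspace}, a duality computation showing the degree-$(-3)$ group of chain maps $C(D_1) \to C(D_2)$ modulo homotopy vanishes when $D_1 \cup m(D_2)$ has only one closed component, and Lemma~\ref{unique} to pin down the $d_1$ discrepancy up to sign; then Lemma~\ref{dmodify} is applied twice. Your claim that Koszul homotopies suffice ``to nullhomotope any chain endomorphism that can be written as a matrix of polynomials in these multiplication operators'' is not what is needed: the residual $d_2$ term is a chain map between distinct planar tangle factorizations (not an endomorphism), and there is no a priori reason it decomposes as a polynomial in the $a_i$ and $b_i$. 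The vanishing hom-space argument is what makes the correction automatic regardless of the explicit form of the residual term.
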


Proving this requires showing both that the decorations used to define $C(D)$ can be changed without effecting $C(D)$, and that $C(D)$ is invariant under Reidemeister moves. These are the respective objectives of the next two sections.

\subsubsection{Manipulating planar diagrams and decorations}

First, part of the decoration on a tangle diagram used to define $C(D)$ is a choice of marked edge near each crossing and dotted arc, but this only effects the differential in $C(D)$ by signs. If two diagrams differ only in marked edges, an appropriate diagonal matrix with $\pm 1$ entries on the diagonal will be an isomorphism between the associated Koszul factorizations. From now on, then, the marked edges will not be shown in diagrams unless signs are particularly relevant.

Using Proposition~\ref{koszulbasischange}, dotted arcs can be moved around the diagram:
\begin{prop}\label{slideswitch}
If $D_1$ and $D_2$ are either of the pairs of diagrams shown in Figures~\ref{arcslide}~or~\ref{arcswitch}, then $C(T_1) \cong C(T_2)$
\end{prop}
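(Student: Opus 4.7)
The plan is to exhibit an explicit isomorphism $C(T_1)\cong C(T_2)$ in each case as a direct application of Proposition~\ref{koszulbasischange}. In both moves, the two tangle diagrams differ only in a small local region containing the dotted arc (or arcs) being manipulated. Since $C(D)$ is defined as a tensor product of the elementary factorizations $C(D_e)$ over the subring $R(D)$, all tensor factors outside the affected region agree on the two sides. Thus it suffices to produce a change-of-basis isomorphism between the tensor products of those factors attached to the elementary subdiagrams that get modified; the identity on the remaining factors then extends this to an isomorphism of the full factorizations.

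For the arcslide in Figure~\ref{arcslide}, moving an endpoint of a dotted arc across a neighboring arc or crossing modifies the linear Koszul generator $a_i$ attached to the moved arc by adding to it a multiple of another generator $a_j$ coming from an adjacent elementary subdiagram. This is precisely the shape of the first change of basis from Proposition~\ref{koszulbasischange}: one takes $\lambda = \pm 1 \in R(D)$, replaces $a_j$ by $a_j + \lambda a_i$, and at the same time replaces $b_i$ by $b_i - \lambda b_j$. The resulting Koszul factorization is, by inspection, exactly $C(T_2)$.

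For the arcswitch in Figure~\ref{arcswitch}, exchanging two arcs that share a common adjacent region leaves the linear generators $a_i$, $a_j$ unchanged but interchanges the associated quadratic generators $b_i$, $b_j$, with a compensating summand. This is accomplished by the second change of basis in Proposition~\ref{koszulbasischange} with $\lambda = \pm 1$. In both arguments the main obstacle is the bookkeeping: one has to track the signs coming from the choice of marked edges near each arc, identify correctly which tensor factors correspond to the arc being moved versus its neighbors, and confirm that the scalars $\lambda$ used in the moves actually lie in $R(D)$ (rather than only in the ambient polynomial ring $\mathbb{Z}[x_1,\dots,x_n]$). Once these points are pinned down, the isomorphism is immediate from Proposition~\ref{koszulbasischange}, and restricting scalars back to the boundary ring gives $C(T_1)\cong C(T_2)$ as claimed.
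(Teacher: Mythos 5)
Your high-level strategy -- reduce to the local tensor factors and apply Proposition~\ref{koszulbasischange} -- is the right one and matches the paper. But the concrete claims about how the lemma applies are not correct, and if carried out as described the argument would fail.

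For the arcslide (Figure~\ref{arcslide}), a single application of the first part of Proposition~\ref{koszulbasischange} does \emph{not} produce $C(T_2)$. With the rows indexed so that $a_1 = x_1-x_3$, $a_2 = x_3-x_5$, the first move with $\lambda = 1$ gives $a_1' = x_1-x_5$ and replaces $b_2$ by $b_2 - b_1$, but $b_1 = (x_2-x_0)(x_2-x_1-x_3+x_0)$ is left untouched, whereas the target has $b_1' = (x_2-x_0)(x_2-x_1-x_5+x_0)$. The discrepancy $b_1' - b_1 = (x_2-x_0)(x_3-x_5)$ is a multiple of $a_2$, not of $b_2$, so it cannot be removed by another application of the first move. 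The paper follows the first move with an application of the \emph{second} part of Proposition~\ref{koszulbasischange}, with $\lambda = -(x_2-x_0)$, which is a nonconstant element of $R(D)$, not $\pm1$. So the arcslide genuinely requires two change-of-basis moves of different types, one of which uses a polynomial $\lambda$.

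Similarly, for the arcswitch (Figure~\ref{arcswitch}) the required $\lambda$ in the second part of Proposition~\ref{koszulbasischange} is $-(x_0-x_1-x_3+x_4)$, again a nonconstant polynomial in the region variables. Your assertion that $\lambda = \pm 1$ and that the result is ``by inspection exactly $C(T_2)$'' after one move with a unit $\lambda$ does not hold; one needs to actually identify the correct $\lambda \in R(D)$ by comparing the quadratic generators on the two sides, and this is where the real content of the lemma lies. The outline is correct in spirit but would need this computation to be a proof.
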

\begin{proof}
For the diagrams in Figure~\ref{arcslide}, 
\begin{align*}
&K\left( \begin{pmatrix} x_1-x_3 \\ x_3-x_5 \end{pmatrix},  \begin{pmatrix} (x_2-x_0)(x_2-x_1-x_3+x_0) \\  (x_4-x_0)(x_4-x_3-x_5+x_0) \end{pmatrix}\right) \\
\cong &K\left( \begin{pmatrix} x_1-x_5 \\ x_3-x_5 \end{pmatrix},  \begin{pmatrix} (x_2-x_0)(x_2-x_1-x_3+x_0) \\  (x_4-x_0)(x_4-x_3-x_5+x_0) - (x_2-x_0)(x_2-x_1-x_3+x_0) \end{pmatrix}\right) \\
\cong &K\left( \begin{pmatrix} x_1-x_5 \\ x_3-x_5 \end{pmatrix},  \begin{pmatrix} (x_2-x_0)(x_2-x_1-x_5+x_0) \\  (x_4-x_0)(x_4-x_3-x_5+x_0) - (x_2-x_0)(x_2-x_5-x_3+x_0) \end{pmatrix}\right) \\
= &K\left( \begin{pmatrix} x_1-x_5 \\ x_3-x_5 \end{pmatrix},  \begin{pmatrix} (x_2-x_0)(x_2-x_1-x_5+x_0) \\  (x_4-x_2)(x_4-x_3-x_5+x_2) \end{pmatrix}\right)
\end{align*}
where the first part of Proposition~\ref{koszulbasischange} is used in the first isomorphism and the second is used in the second. Similarly, for Figure~\ref{arcswitch}, 
\begin{align*}
&K\left( \begin{pmatrix} x_1-x_3 \\ x_0-x_4 \end{pmatrix},  \begin{pmatrix} (x_2-x_0)(x_2-x_1-x_3+x_0) \\  (x_5-x_3)(x_0-x_5-x_3+x_4) \end{pmatrix}\right) \\
\cong &K\left( \begin{pmatrix} x_1-x_3 \\ x_0-x_4 \end{pmatrix},  \begin{pmatrix} (x_2-x_0)(x_2-x_1-x_3+x_0) + (x_0-x_4)(x_0-x_1-x_3+x_4) \\  (x_5-x_3)(x_0-x_5-x_3+x_4) - (x_1-x_3)(x_0-x_1-x_3+x_4) \end{pmatrix}\right) \\
= &K\left( \begin{pmatrix} x_1-x_3 \\ x_0-x_4 \end{pmatrix},  \begin{pmatrix} (x_2-x_4)(x_2-x_1-x_3+x_4) \\  (x_5-x_1)(x_0-x_5-x_1+x_4) \end{pmatrix}\right) \\
\end{align*}
\end{proof}

\begin{figure}[h]
  \centering
  \def\svgwidth{\columnwidth}
  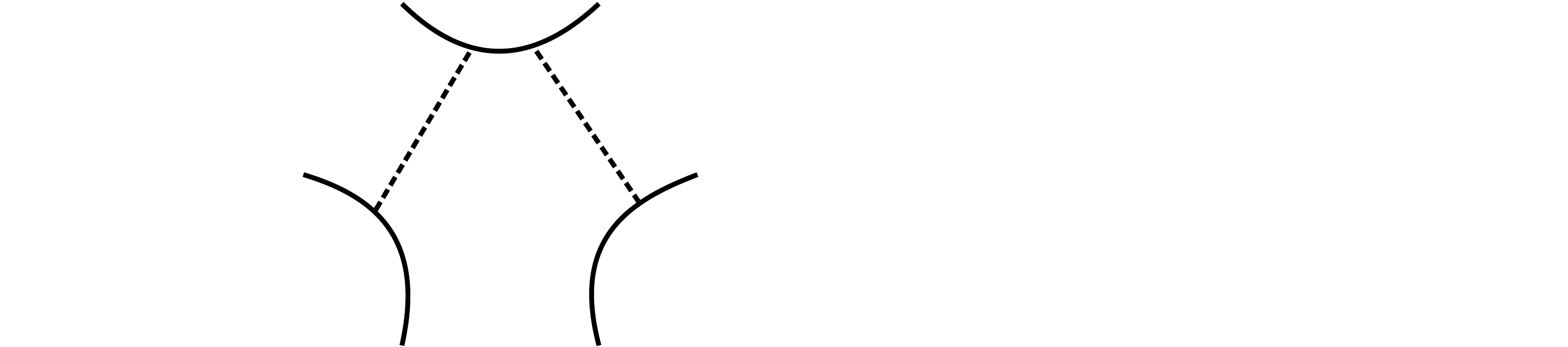
  \caption{Moving a dotted arc}\label{arcslide}
\end{figure}

\begin{figure}[h]
  \centering
  \def\svgwidth{\columnwidth}
  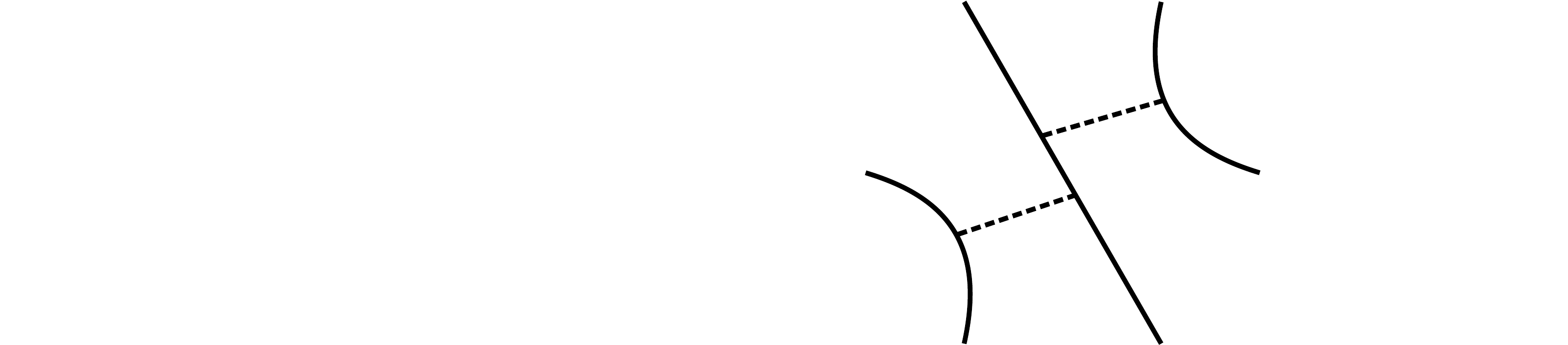
  \caption{Switching two dotted arcs}\label{arcswitch}
\end{figure}

\begin{prop}\label{arceliminate}
Suppose a tangle diagram $D$ contains a region that does not meet the boundary and is adjacent to at least one dotted arc. Then, if $D'$ is the result of removing that arc from $D$, $C(D')$ is $0$-homotopy equivalent to $C(D)$ as factorizations over $R(D')$. 
\end{prop}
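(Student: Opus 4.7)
The plan is to realize $C(D)$ as a Koszul factorization over $R(D) = R(D')[y]$, where $y = x_0 - x_2$ is the difference between the variable $x_0$ of the interior region and the variable $x_2$ of the region on the other side of the chosen dotted arc, and then to eliminate $y$ using Propositions~\ref{linex} and~\ref{perturb}. The key observation is that the arc's elementary subdiagram contributes a Koszul factor of the form $K(x_0 - x_2, b_{\text{arc}})$ whose first entry is exactly $y$. Moreover, because the $x_0$ region does not meet the boundary, every difference in $R(D)$ involving $x_0$ can be rewritten as $y + (x_2 - x_j)$, so $R(D')$ is precisely the subring $R(D)/(y)$ and the map $\pi: R(D) \to R(D')$ sending $y \mapsto 0$ is the substitution $x_0 \mapsto x_2$.

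The first step is to view the vertical factorization $(C(D), d_0)$ as a length-$n$ Koszul factorization $\Ksm{a}{b}$ over $R(D')[y]$, place the arc's factor last so that $a_n = y$, and apply Proposition~\ref{linex}. This produces a special deformation retract from $(C(D), d_0)$ to the Koszul factorization over $R(D')$ obtained by applying $\pi$ to the remaining Koszul factors. This target coincides with the vertical factorization of $C(D')$, since removing the arc merges the two regions it separated, which is exactly the geometric effect of the substitution. The second step is to apply Proposition~\ref{perturb} to promote this SDR of vertical factorizations to a SDR of multifactorizations; because Proposition~\ref{perturb} produces a $0$-homotopy $H$, this yields a $0$-homotopy equivalence between $C(D)$ and some multifactorization $(C(D'), \tilde{D})$ on the underlying module of $C(D')$.

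The main obstacle is the last step: verifying that $\tilde{D}$ agrees, up to isomorphism of multifactorizations, with the differential defined on $C(D')$. Writing $D_1 = D - d_0$, the formula $\tilde{D} = d_0 + p_0 A i_0$ from Proposition~\ref{perturb} gives $\tilde{d}_1 = p_0 d_1 i_0$, and this should match the $d_1$ of $C(D')$ because the higher differentials come from the cone structures at crossings that are unchanged by removing the arc, while $p_0$ and $i_0$ simply implement the substitution $y \mapsto 0$. The higher corrections of the form $p_0 (d_1 h_0)^j d_1 i_0$ can in principle introduce spurious higher differentials; the expected resolution is that each such correction factors through multiplication by $y$ or $b_{\text{arc}}$ on the arc's Koszul factor and is therefore $0$-nullhomotopic on the vertical factorization by Lemma~\ref{koszulhomot}. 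Lemma~\ref{dmodify}, applied inductively in the filtration degree starting from the lowest $i$ where $\tilde{d}_i$ and $d_i$ disagree, then gauges these corrections away and produces the required isomorphism to $C(D')$.
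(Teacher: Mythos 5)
Your proof follows essentially the same route as the paper's: realize the arc's contribution as a Koszul factor $K(y, b_{\text{arc}})$ with $y = x_i - x_j$, eliminate it via Proposition~\ref{linex}, and lift the resulting special deformation retract of vertical factorizations to the multifactorization level via Proposition~\ref{perturb}. (The paper's own proof is terse and invokes only Proposition~\ref{linex}, leaving the passage through Proposition~\ref{perturb} implicit when $D$ has crossings; you are right that this step is needed, and you make it explicit.)

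The one place you go astray is the final step. The perturbation corrections $p_0(D_1 h_0)^j D_1 i_0$ for $j \ge 1$ do not need to be gauged away by Lemma~\ref{dmodify}: they vanish identically. The SDR produced by Proposition~\ref{linex} is a tensor product of a contracting homotopy on the arc's length-one factor with the identity on everything else, so $h_0 = h_{\text{arc}} \otimes 1$, $i_0 = \iota \otimes 1$, $p_0 = \pi \otimes 1$, while $D_1 = 1 \otimes D_{1,\text{rest}}$ acts entirely on the other tensor factors. The SDR axiom $h_0 i_0 = 0$ (equivalently, $h_{\text{arc}} \iota = 0$) then forces $h_0 D_1 i_0 = \pm (h_{\text{arc}}\iota) \otimes D_{1,\text{rest}} = 0$, so every term of $A$ beyond the leading $D_1$ dies after pre-composing with $i_0$, and $\tilde D = d_0 + p_0 D_1 i_0$ is already exactly the differential of $C(D')$. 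Your proposed reason for the fixability of the corrections, that they ``factor through multiplication by $y$ or $b_{\text{arc}}$'' and are then killed by Lemma~\ref{koszulhomot}, does not quite parse: after applying $p_0$ and $i_0$ the arc factor is gone, so there is nothing for those elements to multiply. The correct mechanism is simply the SDR identities together with the tensor-product form of $h_0$, and once you note this, Lemma~\ref{dmodify} is not needed at all.
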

\begin{proof}
Let $x_i$ be the variable labeling the region given in the problem statement, and let $x_j$ be the variable labeling the region on the other side of the dotted arc. Then $R(D) \cong R(D')[x_i-x_j]$, and the quotient map $R(D) \to R(D')$ simply replaces any occurrence of $x_i$ by $x_j$. The factorization $C(D)$ has the form
\begin{equation}
C(D) = K((a_0,\dots,a_{n-1},x_i-x_j),(b_0,\dots,b_n)).
\end{equation}
To get $C(D')$ from this, remove $a_n = x_i-x_j$ and $b_n$ and replace any other occurrence of $x_i$ by $x_j$, so by Proposition~\ref{linex} $C(D)$ and $C(D')$ are $0$-homotopy equivalent.
\end{proof}

\begin{prop}\label{loopeliminate}
If $D$ is the diagram in Figure~\ref{loopeliminatefig}, then $C(D) \cong R\{-1\} \oplus R\{1\}$ for $R = R(\partial D) = \Z[x_0-x_1]$.
\end{prop}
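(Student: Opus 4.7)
The plan is to write $C(D)$ as an explicit Koszul factorization and reduce it using the propositions of Section~2. First, I would label every region of $D$ with a variable --- the two boundary-adjacent ones $x_0, x_1$, together with internal variables $x_2, \ldots$ for the regions created by the loop and its dotted arc. Each elementary subdiagram then contributes a length-$1$ Koszul factor modelled on $C(D_0)$ or $C(D_1)$, giving an explicit presentation $C(D) = \Ksm{a}{b}$ over $R(D)$. I would then repeatedly apply Proposition~\ref{koszulbasischange} to rearrange the entries of $\mathbf{a}$ and $\mathbf{b}$ so that each internal variable appears as the first term of exactly one $a_j$ in the form $x_i - x_\ell$, and peel these factors off one at a time using Proposition~\ref{linex} (much as in the proof of Proposition~\ref{arceliminate}). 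This eliminates all but one of the internal variables while preserving $0$-homotopy type, leaving a Koszul factorization over $R_0[x]$ with $R_0 = R$ and $x$ the remaining internal variable.

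At this point the residual factor should take the form $K(0, x^2 + h)$ for some $h \in R_0$ of degree $-4$, reflecting the fact that the loop forces a quadratic relation in the potential (as is standard for closed circles in the $\operatorname{sl}(2)$ Khovanov--Rozansky setup). Applying Proposition~\ref{sqex} then gives a deformation retract to $R\{-1\} \oplus R\{1\}$, which is the claim.

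The main obstacle is the bookkeeping: arranging the elimination order so that the loop's variable is processed last, and checking via Proposition~\ref{koszulbasischange} that after all substitutions the residual $b$-entry is genuinely of the form $x^2 + h$ with $h \in R_0$ rather than some more complicated polynomial in $x$. The grading shifts must also be tracked through every retract, both to see that the remaining Koszul factor has its ``$a$'' entry in internal degree exactly $0$ (so that Proposition~\ref{sqex} applies on the nose) and to confirm that the final summands carry exactly the shifts $\{-1\}$ and $\{+1\}$ claimed, with no residual filtration shift.
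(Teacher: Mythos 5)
Your overall roadmap matches the paper's: present $C(D)$ as a Koszul factorization and apply Proposition~\ref{sqex}. But the diagram in Figure~\ref{loopeliminatefig} is much simpler than your plan anticipates: it has exactly one dotted arc and three regions $x_0, x_1, x_2$, so $C(D)$ is already a single length-$1$ Koszul factor over $R(D) = \Z[x_0-x_1,\,x_0-x_2]$. The elimination loop you describe---repeatedly invoking Proposition~\ref{koszulbasischange} to expose linear $a$-entries and peeling them off with Proposition~\ref{linex}---has nothing to act on; there is only one internal variable and no other factors.

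The real content, which you flag as ``the main obstacle'' and then leave unverified, is that this single factor actually takes the form $K(0, x^2+h)$. A degree count (the $b$-entry lives in internal degree $-4$) shows only that $b$ is at most quadratic in $x = x_2 - x_0$; it does not rule out a cross term proportional to $(x_0-x_1)(x_0-x_2)$ or a non-unit coefficient on $x^2$, either of which would prevent Proposition~\ref{sqex} from applying over $\Z$. The paper's entire proof is precisely the identity you would need here: the unique Koszul factor is $K\bigl(0,\,(x_2-x_1)(x_1+x_2-2x_0)\bigr)$, and a one-line expansion gives $(x_2-x_1)(x_1+x_2-2x_0) = (x_2-x_0)^2 - (x_0-x_1)^2$, which is $x^2 + h$ with $h = -(x_0-x_1)^2 \in R_0 = \Z[x_0-x_1]$. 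Once this identity is written down, Proposition~\ref{sqex} closes the argument exactly as you indicate; without it, the proposal is a correct plan but not a proof.
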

\begin{proof}
The ring $R(D)$ is $\Z[x_0-x_1,x_0-x_2]$, and the factorization $C(D)$ is
\begin{align*}
C(D) &= K(0,(x_2-x_1)(x_1+x_2-2x_0)) \\
&= K(0,(x_2-x_0)^2 - (x_0-x_1)^2
\end{align*}
so the result follows from Proposition~\ref{sqex}
\end{proof}

\begin{figure}[h]
  \centering
  \def\svgwidth{\columnwidth}
\begingroup%
  \makeatletter%
  \providecommand\color[2][]{%
    \errmessage{(Inkscape) Color is used for the text in Inkscape, but the package 'color.sty' is not loaded}%
    \renewcommand\color[2][]{}%
  }%
  \providecommand\transparent[1]{%
    \errmessage{(Inkscape) Transparency is used (non-zero) for the text in Inkscape, but the package 'transparent.sty' is not loaded}%
    \renewcommand\transparent[1]{}%
  }%
  \providecommand\rotatebox[2]{#2}%
  \newcommand*\fsize{\dimexpr\f@size pt\relax}%
  \newcommand*\lineheight[1]{\fontsize{\fsize}{#1\fsize}\selectfont}%
  \ifx\svgwidth\undefined%
    \setlength{\unitlength}{921.25984252bp}%
    \ifx\svgscale\undefined%
      \relax%
    \else%
      \setlength{\unitlength}{\unitlength * \real{\svgscale}}%
    \fi%
  \else%
    \setlength{\unitlength}{\svgwidth}%
  \fi%
  \global\let\svgwidth\undefined%
  \global\let\svgscale\undefined%
  \makeatother%
  \begin{picture}(1,0.13443304)%
    \lineheight{1}%
    \setlength\tabcolsep{0pt}%
    \put(0,0){\includegraphics[width=\unitlength,page=1]{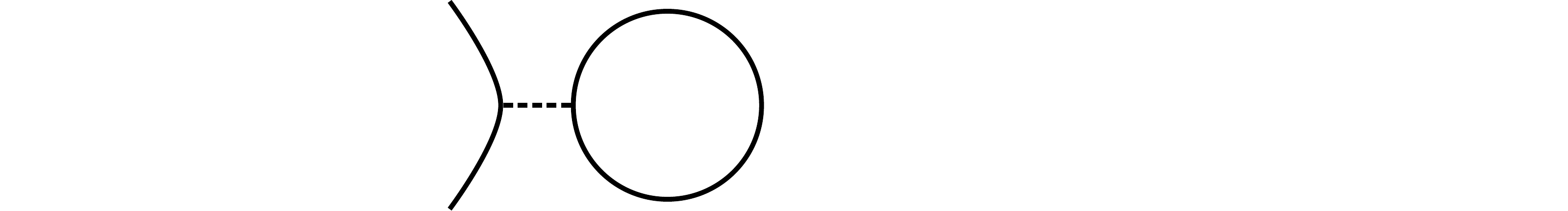}}%
    \put(0.52093052,0.06362301){\color[rgb]{0,0,0}\makebox(0,0)[lt]{\lineheight{1.25}\smash{\begin{tabular}[t]{l}$\cong R\{-1\} \oplus R\{1\}$\end{tabular}}}}%
    \put(0.33395473,0.11937259){\color[rgb]{0,0,0}\makebox(0,0)[lt]{\lineheight{1.25}\smash{\begin{tabular}[t]{l}$x_0$\end{tabular}}}}%
    \put(0.27696755,0.06319957){\color[rgb]{0,0,0}\makebox(0,0)[lt]{\lineheight{1.25}\smash{\begin{tabular}[t]{l}$x_1$\end{tabular}}}}%
    \put(0.38919741,0.06319957){\color[rgb]{0,0,0}\makebox(0,0)[lt]{\lineheight{1.25}\smash{\begin{tabular}[t]{l}$x_2$\end{tabular}}}}%
  \end{picture}%
\endgroup%

  \caption{Eliminating a loop. $R$ is the boundary ring $\Z[x_0-x_1]$.}\label{loopeliminatefig}
\end{figure}

As a first application of these moves, we can now compute the homology of the planar diagram of an unlink:
\begin{prop}\label{closedplanar}
If $D$ is a planar diagram with no boundary and $n$ closed components, then the homology of $C(D)$ is isomorphic to the module
\begin{equation*}
\Z[x_1,\dots,x_n]/(x_i^2 - x_j^2)\{n-1\} = \Z[h][x_1,\dots,x_n]/(x_i^2-h)\{n-1\}
\end{equation*}
where $h$ has degree $-4$ and the $x_i$ are the variables associated to one edge on each component of $D$.
\end{prop}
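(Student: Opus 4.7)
The plan is to choose a minimal decoration on $D$, express $C(D)$ as a length-$(n-1)$ Koszul factorization, and then apply Proposition~\ref{sqex} iteratively to reduce it.

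Decorate $D$ with exactly $n-1$ dotted arcs that, viewing each closed component as a vertex, form a spanning tree (no chords). For $n=1$ no arcs are needed, $C(D)$ is the empty tensor product $R(D) \cong \Z[y_1]$, and the claim holds directly. For $n\ge 2$, an Euler count with $V = 2(n-1)$ and $E = 3(n-1)$ gives $F = n+1$, so even after the arcs are added the exterior of $D$ remains a single region; in particular, for each arc the ``north'' and ``south'' regions of its local $D_0$-picture are both the common exterior, and the corresponding labels $x_0, x_2$ are the same variable. Each arc's elementary factorization therefore collapses to
\begin{equation*}
C(D_e) \;=\; K\!\bigl(0,\,(y_j - y_k)(y_j + y_k)\bigr) \;=\; K\!\bigl(0,\,y_j^2 - y_k^2\bigr),
\end{equation*}
where $y_i = x_{\text{out}} - x_i$ is the edge variable on the $i$-th component and the arc joins components $j,k$. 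Combining over all arcs,
\begin{equation*}
C(D) \;\cong\; K\!\bigl(\mathbf{0},\,(y_{j_\ell}^2 - y_{k_\ell}^2)_{\ell=1}^{n-1}\bigr)
\end{equation*}
over $R(D) = \Z[y_1,\ldots,y_n]$.

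At each step of the reduction the tree has a leaf circle incident to a unique arc, so that leaf's edge variable appears in exactly one of the remaining Koszul rows. Applying Proposition~\ref{sqex} to that row eliminates the leaf's variable and doubles the complex with internal shifts $\{-1\}$ and $\{+1\}$. After all $n-1$ applications only a single variable $y_n$ remains, and $C(D)$ has been reduced to $\bigoplus_{\epsilon \in \{-1,+1\}^{n-1}} \Z[y_n]\bigl\{\sum_\ell \epsilon_\ell\bigr\}$.

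It remains to identify this direct sum with $\Z[h][x_1,\ldots,x_n]/(x_i^2-h)\{n-1\}$ under $x_i \leftrightarrow y_i$ and $h \leftrightarrow y_n^2$. Using the relations $y_i^2 = y_n^2$, the claimed module has $\Z[y_n]$-basis $\{y_{i_1}\cdots y_{i_k} : \{i_1,\ldots,i_k\} \subseteq \{1,\ldots,n-1\}\}$; assigning each subset to the sign sequence with $\epsilon_\ell = -1$ iff $\ell$ is in the subset gives a degree-preserving bijection onto the summands ($n-1-2k = \sum_\ell \epsilon_\ell$). The main obstacle is verifying the collapse to $K(0, y_j^2 - y_k^2)$ at each arc; once that is in hand, the remainder is a routine iteration of Proposition~\ref{sqex} combined with the combinatorial matching above.
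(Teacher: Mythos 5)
Your overall plan (spanning tree decoration, collapse each arc's Koszul row to $K(0,y_j^2-y_k^2)$, iterate Proposition~\ref{sqex} to peel off leaves) is a legitimate way to compute the underlying graded group, but it falls short of what the proposition claims. The proposition is an isomorphism of $R(D)=\Z[x_1,\dots,x_n]$\emph{-modules}: the $x_i$ named in the statement are literal elements of $R(D)$, and this module structure is used in an essential way afterward (for instance, Lemma~\ref{saddleplanar} identifies the saddle maps with the $\mathcal{F}_3$ Frobenius maps by computing their effect as module maps, and the whole $\mathcal{F}_3$ identification hinges on it). What you produce at the end is a free $\Z[y_n]$-module $\bigoplus_{\epsilon}\Z[y_n]\{\sum_\ell \epsilon_\ell\}$ together with a degree-preserving bijection of $\Z[y_n]$-bases. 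This shows the two sides agree as graded $\Z[y_n]$-modules, but it says nothing about how $y_1,\dots,y_{n-1}$ act on the summands — and there are many incompatible extensions of a graded $\Z[y_n]$-module to a $\Z[y_1,\dots,y_n]$-module. You would need to push the $R(D)$-action through the special deformation retracts (e.g.\ compute $x\mapsto P(x\cdot I(-))$ at each application of Proposition~\ref{sqex}) and check it matches. The paper instead reduces to the Koszul factorization $K\bigl((0,\dots,0),(x_1^2-x_2^2,\dots,x_{n-1}^2-x_n^2)\bigr)$ for a path decoration and uses that this is a regular sequence, so the homology is \emph{canonically} the quotient module $R(D)/(x_i^2-x_j^2)\{n-1\}$, giving the module isomorphism immediately and without any case-by-case peeling or subset bookkeeping.

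A smaller issue: the statement that ``the exterior of $D$ remains a single region; in particular, for each arc the `north' and `south' regions \dots are both the common exterior'' is not correct in general — if the circles are nested the arcs lie in different regions, not one common exterior, and the Euler count $F=n+1$ does not by itself locate the arcs. The conclusion you actually need, namely $x_0=x_2$ at each arc, is still true, but the right justification is graph-theoretic: with a spanning-tree decoration each dotted arc is a bridge of the planar graph formed by the circles and arcs, and an edge of a planar graph is a bridge exactly when the two faces incident to it coincide. With that fix, together with the module-structure point above, your reduction can be made to work, but it ends up longer than the regular-sequence argument.
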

\begin{proof}
Using Proposition~\ref{slideswitch} and~\ref{arceliminate}, the dotted arcs in the diagram can be arranged so that the closed components form a sequence $K_1,\dots,K_n$ and the dotted arcs connect $K_i$ to $K_{i+1}$. Then $R(D)$ is a polynomial ring $\Z[x_1,\dots,x_n]$ where $x_i$ is the variable associated to an edge on $K_i$ (with some chosen orientation), and $C(D)$ is the Koszul factorization
\begin{equation*}
K\left((0,\dots,0),(x_1^2-x_2^2,\dots,x_{n-1}^2 - x_n^2)\right)
\end{equation*}
Since the sequence $x_1^2-x_2^2,\dots,x_{n-1}^2 - x_n^2$ is regular, the homology of this complex is the quotient $R(D)/(x_i^2 - x_j^2)$ with the grading shifted since these terms appear in the second part of the Koszul factorization rather than the first.
\end{proof}
This is exactly the module assigned to $n$ circles by the Frobenius extension $\mathcal{F}_3$ from \cite{khovanov2004link}. This Frobenius extension also controls the action of the saddle maps on homology.

Suppose that two decorated tangle diagrams $D$,$D'$ are identical outside some region in which $D$ looks like the basic tangle $D_0$ from Figure~\ref{basics}, and $D'$ looks like $D_1$. Then $R(D) = R(D')$, and there is a complex $C_{\text{outside}}$ formed by all the crossings and arcs away from the changed region such that $$C(D) = C_{\text{outside}} \otimes_{R(D)} C(D_0)$$ and $$C(D') = C_{\text{outside}} \otimes_{R(D)} C(D_1).$$ 

\begin{defn}\label{saddlemap}
The saddle map $s_{D \to D'}: C(D) \to C(D')\{1\}$ is the tensor product of the elementary saddle map $s_{D_0 \to D_1}$ used in defining $C(D)$ with the identity on $C_{\text{outside}}$.
\end{defn}

\begin{lemma}\label{saddleplanar}
If $D$ and $D'$ are two planar, boundaryless diagrams differing by a saddle move, then the map on homology induced by $s_{D_0 \to D_1}$ is the same as the cobordism map coming from the $\mathcal{F}_3$ Frobenius extension.
\end{lemma}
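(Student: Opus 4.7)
The plan is to reduce to a small local model using the dotted-arc manipulation results already established, and then evaluate the saddle map explicitly on a generating set of the homology. First, Propositions~\ref{slideswitch} and~\ref{arceliminate} allow one to normalize the decoration of $D$ so that all dotted arcs lie outside the saddle region and are arranged in a minimal tree connecting the distinct closed components. After this cleanup, $C(D) \cong C(D_0) \otimes_{R(D)} C_{\text{outside}}$, the homology of $C_{\text{outside}}$ is known by Proposition~\ref{closedplanar}, and one may further factor off the tensor factors coming from components not adjacent to the saddle. The problem then splits into two local cases, according to whether the four corners of the $D_0$ tangle lie on two distinct circles of $D$ (merge case) or on a single circle (split case).

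In the merge case, after normalization the relevant homology has a basis $\{1, x_1, x_3, x_1 x_3\}$ subject to the relations $x_1^2 = x_3^2 = h$ coming from the Frobenius extension $\mathcal{F}_3$, and $x_0,x_2$ are identified with $x_1$ or $x_3$ via the outside arcs. Applying the diagonal of \eqref{elementarysaddle} and passing to homology, one checks directly that the saddle sends $x_1^i x_3^j$ to $x^{i+j}$ on the merged circle; this is exactly the multiplication $\mu$ in $\mathcal{F}_3$. The computation is straightforward here because both factors $x_0 - x_2$ and $x_1 - x_2 + x_3 - x_0$ simplify significantly once the identifications imposed by $C_{\text{outside}}$ are used.

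In the split case, a single circle carries one marker variable, which may be placed in a region adjacent to the saddle. On the homology generator $1$, the diagonal map of \eqref{elementarysaddle} produces a cycle representing $x_1 + x_3$ in $H(C(D'))$, matching $\Delta(1) = 1 \otimes x + x \otimes 1$. On the generator $x$, the off-diagonal factor $(x_1 - x_2 + x_3 - x_0)$ combines with the signs in \eqref{elementarysaddle} and with the ring relations $x_i^2 = h$ to produce $x \otimes x + h \cdot 1 \otimes 1$, matching $\Delta(x)$.

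The main obstacle is this last computation in the split case: the $h$ term must arise from the interaction between the off-diagonal factor $(x_1 - x_2 + x_3 - x_0)$ and the squaring relation $x_i^2 = h$, and every sign and grading shift built into \eqref{elementarysaddle} has to be tracked carefully to arrive at the correct coefficient. Modulo this bookkeeping, the lemma reduces to a finite check on a small set of generators in each of the two cases.
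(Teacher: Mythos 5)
Your proposal is correct and follows essentially the same strategy as the paper: use Propositions~\ref{slideswitch} and~\ref{arceliminate} to normalize the dotted arcs, reduce to an explicit local model for the merge and split saddles, and verify on generators that the induced map is the $\mathcal{F}_3$ multiplication or comultiplication. The one place where you flag residual ``bookkeeping'' --- extracting the $h$ term in $\Delta(x)$ from the off-diagonal factor $(x_1-x_2+x_3-x_0)$ --- is streamlined in the paper by choosing the local model of Figure~\ref{R1}, in which the edge ring has only two variables and the split map is visibly multiplication by $x_1+x_2$; the identity $\Delta(x^n) = x_1^n(x_1+x_2)$ then holds on the nose in $\Z[x_1,x_2]/(x_1^2-x_2^2)$, with the $h$ coming automatically from $x_1^2 = x_2^2$ rather than from a separate sign-and-grading check.
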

\begin{proof}
Again by moving the dotted arcs, it suffices to consider the saddle maps looking locally like the two rows of Figure~\ref{R1}. In the first row, the merge map is given by the horizontal arrows here:
\begin{equation*}
\xymatrix{
\Z[x_1,x_2] \ar[d]^{x_1^2 - x_2^2} \ar[r]^{x_1+x_2} & \Z[x_1,x_2] \ar[d]^{x_1-x_2} \\
\Z[x_1,x_2] \ar[r]^1 & \Z[x_1,x_2]
}
\end{equation*}
where $x_1$ and $x_2$ are the variables associated to the edges meeting the boundary and the internal edge, respectively. Embedding this diagram in a larger closed diagram and taking homology, this becomes just the quotient map identifying $x_1$ and $x_2$, which is the merge map for $\mathcal{F}_3$. Similarly, the split map in the second row of Figure~\ref{R1} is given by these horizontal arrows
\begin{equation*}
\xymatrix{
\Z[x_1,x_2] \ar[d]^{x_1 - x_2} \ar[r]^{1} & \Z[x_1,x_2] \ar[d]^{x_1^2-x_2^2} \\
\Z[x_1,x_2] \ar[r]^{x_1+x_2} & \Z[x_1,x_2]
}
\end{equation*}
On homology, this is similarly the $\mathcal{F}_3$ split map, sending an element $$x^n \in \Z[x] = \Z[x_1,x_2]/(x_1-x_2)$$ to $$x_1^n(x_1+x_2) = x_2^n(x_1+x_2) \in \Z[x_1,x_2]/(x_1^2-x_2^2).$$ 
\end{proof}

These moves also suffice to express $C(D)$ for any decorated planar diagram $D$ as a direct sum of complexes $C(D_i)$ where each $D_i$ is a diagram in which every region meets the boundary. 

\begin{prop}\label{homspace}
Suppose $D_1$ and $D_2$ are decorated diagrams of planar tangles with $2k$ boundary points, such that when $D_1$ is glued to the mirror of $D_2$ along their boundaries the resulting tangle has $c$ closed components. Then the group of chain maps $C(D_1) \to C(D_2)\{n\}$ modulo $0$-homotopy is zero if $n < k - c$ or if $n$ and $k-c$ have different parity, and is isomorphic to $\Z$ for $n = k-c$.
\end{prop}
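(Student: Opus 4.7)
The plan is to identify the group of chain maps modulo $0$-homotopy with a specific graded piece of the homology of a closed planar diagram, and then apply Proposition~\ref{closedplanar}. For planar tangles, $C(D_1)$ and $C(D_2)$ are Koszul matrix factorizations over their edge rings, with the potential $w$ depending only on the shared boundary. The group of chain maps $C(D_1) \to C(D_2)\{n\}$ modulo $0$-homotopy equals $H^0$ of the internal Hom matrix factorization $\underline{\operatorname{Hom}}(C(D_1), C(D_2)\{n\})$, equivalently the internal-degree-$(-n)$ piece of the cohomology of $\underline{\operatorname{Hom}}(C(D_1), C(D_2))$.

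Standard Koszul duality for matrix factorizations provides a natural isomorphism $C(D_1)^\vee \cong C(\bar D_1)\{s\}$ for some grading shift $s$, where $\bar D_1$ denotes the mirror of $D_1$. In terms of Koszul data, mirroring the tangle swaps the $\mathbf{a}$ and $\mathbf{b}$ sequences of each elementary factor, and $s$ accumulates the shifts $j_i - i_i$ contributed by each $K(a_i, b_i)$ with $\deg a_i = 2i_i$ and $\deg b_i = 2j_i$. Combining this with the adjunction $\underline{\operatorname{Hom}}(C(D_1), C(D_2)) \cong C(D_1)^\vee \otimes_{R(\partial)} C(D_2)$ and recognizing that gluing $\bar D_1$ to $D_2$ along the common boundary produces the closed diagram $\bar D_1 \cup D_2$, which is the mirror of $D_1 \cup \bar D_2$ and thus has the same $c$ closed components, one obtains
\[
\underline{\operatorname{Hom}}(C(D_1), C(D_2)) \cong C(\bar D_1 \cup D_2)\{s\}.
\]
Proposition~\ref{closedplanar} then computes $H(C(\bar D_1 \cup D_2)) \cong \Z[x_1,\ldots,x_c]/(x_i^2 - x_j^2)\{c-1\}$, a graded $\Z$-module concentrated in internal degrees of parity $c-1$, bounded above by $c-1$, with the top graded piece isomorphic to $\Z$ (generated by the class of $1$).

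Putting these together, $\operatorname{Hom}^0(C(D_1), C(D_2)\{n\})$ is identified with the internal-degree-$(s-n)$ part of $H(C(\bar D_1 \cup D_2))$. Once $s$ is pinned down to equal $k-1$, this piece is nonzero iff $s - n \leq c - 1$ and $s - n \equiv c - 1 \pmod 2$, equivalently iff $n \geq k - c$ and $n \equiv k - c \pmod 2$, and is isomorphic to $\Z$ precisely at the boundary $n = k - c$, matching the claim. The main technical obstacle is establishing $s = k - 1$: this shift is the sum of individual Koszul-duality contributions from each elementary tangle together with the grading accounting coming from the tensor product over $R(\partial)$. The cleanest route is to reduce $D_1$ to a standard connected form using Propositions~\ref{slideswitch}, \ref{arceliminate}, and \ref{loopeliminate}, for which the shift can be verified by a direct computation on the resulting Koszul factorization.
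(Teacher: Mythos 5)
Your approach is essentially the same as the paper's: identify the homotopy classes of chain maps with a graded piece of the homology of the closed planar diagram $D_1 \cup m(D_2)$ via the duality $C(D)^\vee \cong C(m(D))\{\text{shift}\}$ and $\operatorname{Hom}(A,B) \cong A^\vee \otimes B$, then invoke Proposition~\ref{closedplanar}. The differences are in the details of the shift computation. The paper pins it down quickly by observing that the ``every region meets the boundary'' requirement forces $C(D_1)$ and $C(D_2)$ to be tensor products of exactly $k-1$ basic planar factors, each of whose duals picks up a $\{-1\}$, so the total shift is $\{1-k\}$; you instead leave $s$ to be determined by reducing $D_1$ to a standard form, which works but is less direct. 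Two small cautions: your stated value $s = k-1$ has the opposite sign from the paper's $1-k$, and this only comes out right in your final step because of a compensating sign in how you read off the ``internal-degree-$(s-n)$'' piece (with the paper's convention $M\{i\}$ shifts degrees \emph{up} by $i$, so the degree-$(-n)$ piece of $C(\bar D_1\cup D_2)\{1-k\}$ is the degree-$(k-1-n)$ piece of $C(\bar D_1\cup D_2)$, which does match your formula); and your assertion that mirroring swaps the sequences $\mathbf{a}$ and $\mathbf{b}$ in the Koszul data is not quite right --- on a basic planar factor mirroring just relabels regions and negates one of $a,b$, and it is taking duals, not mirroring, that exchanges the roles of $a$ and $b$ (up to sign). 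Neither issue affects the validity of the method, but both should be cleaned up if this were written out in full.
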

\begin{proof}
In order for each region to meet the boundary, each of $D_1$ and $D_2$ must have $k$ components connected by $k-1$ dotted arcs, so $C(D_1)$ and $C(D_2)$ must be tensor products of $k-1$ copies of the basic planar tangles in the upper part of Figure~\ref{basics}. For any of the basic tangles $D_0$, the dual of $C(D_0)$ is isomorphic to $C(m(D_0))\{-1\}$ where $m(D)$ is the mirror image of a diagram, so the dual of $C(D_2)$ is isomorphic to $C(m(D_2))\{1-k\}$. Therefore, the group of homotopy classes of chain maps $C(D_1) \to C(D_2)\{n\}$ is isomorphic to the degree $0$ piece of the homology of $$C(D_1) \otimes C(m(D_2))\{1-k+n\} = C(D_1 \cup m(D_2))\{1-k+n\}.$$ The diagram $D_1 \cup m(D_2)$ has $c$ components, so by Proposition~\ref{closedplanar} the homology of $C(D_1 \cup m(D_2))$ is isomorphic to $M\{c-1\}$ where $M$ is some module supported in nonpositive even degrees and with degree $0$ piece $\Z$. The group of $0$-homotopy classes of chain maps $C(D_1) \to C(D_2)$ is then the degree $0$ piece of $M\{c-k+n\}$, which is zero unless $c-k+n$ is even and nonnegative and isomorphic to $\Z$ if $c-k+n = 0$. 
\end{proof}

This will often show that two maps are homotopic up to scale. To pin down the factor we will need a slightly more technical result:
\begin{lemma}\label{unique}
With $D_1$, $D_2$, $k$, and $c$ as above, suppose that ${f,g: C(D_1) \to C(D_2)\{k-c\}}$ are two chain maps such that, with respect to some bases for $C(D_1)$ and $C(D_2)$, both $f$ and $g$ have some matrix entry equal to $\pm 1$. Then $f$ is $0$-homotopic to $\pm g$.
\end{lemma}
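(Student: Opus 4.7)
The plan is to use Proposition~\ref{homspace} to reduce to an integer computation, and then check that a $\pm 1$ matrix entry forces the relevant integer to be $\pm 1$. By Proposition~\ref{homspace}, the abelian group of $0$-homotopy classes of chain maps $C(D_1) \to C(D_2)\{k-c\}$ is isomorphic to $\Z$; pick a generator represented by a chain map $f_0$, so that $f$ is $0$-homotopic to $\alpha f_0$ and $g$ is $0$-homotopic to $\beta f_0$ for some $\alpha, \beta \in \Z$. It suffices to show that any chain map $F$ with a matrix entry $\pm 1$ represents $\pm 1$ in this $\Z$, for then $\alpha, \beta \in \{\pm 1\}$ and hence $f$ is $0$-homotopic to $\pm g$.

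Following the proof of Proposition~\ref{homspace}, I identify the complex of maps from $C(D_1)$ to $C(D_2)\{k-c\}$ with $C(D_1 \cup m(D_2))\{1-c\}$ via the duality $C(D_2)^* \cong C(m(D_2))\{1-k\}$, so that $F$ corresponds to a cycle $\tilde F$. Using Propositions~\ref{slideswitch}, \ref{arceliminate}, and~\ref{loopeliminate}, I simplify $C(D_1 \cup m(D_2))$ to the standard Koszul factorization from Proposition~\ref{closedplanar}, whose degree-zero homology is a rank-one free $\Z$-module generated by an explicit monomial. Under this identification, the class of $\tilde F$ in the degree-zero homology is exactly the scalar $\alpha$. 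Using the basis-change moves of Proposition~\ref{koszulbasischange}, the $\pm 1$ entry of $F$ can be transported into the position specified by the simplification, and tracking it through the explicit homotopy equivalences of Propositions~\ref{linex} and~\ref{sqex} shows that the coefficient of the degree-zero homology generator in $[\tilde F]$ equals $\pm 1$.

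The main obstacle is the bookkeeping in this last step: the simplification maps mix matrix entries in potentially nontrivial ways, and one must verify that the $\pm 1$ entry is preserved, up to sign, in the correct output position rather than being cancelled by boundary corrections. This is handled by the observation that in the simplified complex the degree-zero stratum is one dimensional over $\Z$, so any additional contributions from the reduction lie in strictly positive degree and contribute only boundaries in $C(D_1 \cup m(D_2))$, leaving the homology class of $\tilde F$ unchanged and yielding $\alpha = \pm 1$ (and similarly $\beta = \pm 1$).
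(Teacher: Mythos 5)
Your opening reduction is sound: by Proposition~\ref{homspace} the group of $0$-homotopy classes is $\Z$, and it suffices to show that any chain map with a matrix entry equal to $\pm 1$ represents a generator. However, the way you try to establish that claim---pushing $F$ through the duality $C(D_2)^* \cong C(m(D_2))\{1-k\}$, simplifying $C(D_1 \cup m(D_2))$ to a standard Koszul form, and ``transporting'' the $\pm1$ entry via basis changes and the retractions of Propositions~\ref{linex} and~\ref{sqex}---is not actually carried out, and the final ``observation'' does not close the gap. The simplification maps are retractions, not mere base changes: they can send the position of your $\pm1$ entry to a sum of terms in the simplified complex, and it is not clear that the resulting coefficient on the degree-zero generator is $\pm1$ rather than, say, $0$ or $\pm2$ with the $\pm1$ appearing in a boundary. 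Nothing in your argument rules this out.

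The paper avoids all of this with a much shorter observation. Since the group of homotopy classes is $\Z$, there exist coprime integers $a,b$ with $af$ $0$-homotopic to $bg$, so $af - bg = Hd_1 + d_2 H$ for some $H$. Now note that every matrix entry of the differentials $d_1, d_2$ of $C(D_1), C(D_2)$ is a polynomial with zero constant term, so the same is true of $Hd_1 + d_2 H$. Taking constant terms entrywise gives $a\,f_{\mathrm{const}} = b\,g_{\mathrm{const}}$ as integer matrices. Since $f$ has a $\pm 1$ entry, some entry of $f_{\mathrm{const}}$ is $\pm 1$, hence $b \mid a$; symmetrically $a \mid b$, and with $\gcd(a,b)=1$ we get $a,b \in \{\pm 1\}$. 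This is the idea your proof is missing: you never use the structural fact that the differentials kill the constant-term stratum, which is exactly what makes the $\pm 1$ entry control the homotopy class. Replace the duality-and-simplification bookkeeping with this constant-term argument and the proof becomes both correct and short.
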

\begin{proof}
By Proposition~\ref{homspace}, there are coprime integers $a,b$ such that $af$ and $bg$ are homotopic. Since the matrix entries of the differentials of $C(D_1)$ and $C(D_2)$ consist only of polynomials with no constant term, this can only be true if $af$ and $bg$ have the same constant term. Since $f$ and $g$ both have some matrix entry equal to $\pm 1$, $a$ and $b$ can only be $\pm 1$ and the result follows.
\end{proof}

\subsubsection{Reidemeister moves}

\begin{figure}[h]
  \centering
  \def\svgwidth{\columnwidth}
\begingroup%
  \makeatletter%
  \providecommand\color[2][]{%
    \errmessage{(Inkscape) Color is used for the text in Inkscape, but the package 'color.sty' is not loaded}%
    \renewcommand\color[2][]{}%
  }%
  \providecommand\transparent[1]{%
    \errmessage{(Inkscape) Transparency is used (non-zero) for the text in Inkscape, but the package 'transparent.sty' is not loaded}%
    \renewcommand\transparent[1]{}%
  }%
  \providecommand\rotatebox[2]{#2}%
  \newcommand*\fsize{\dimexpr\f@size pt\relax}%
  \newcommand*\lineheight[1]{\fontsize{\fsize}{#1\fsize}\selectfont}%
  \ifx\svgwidth\undefined%
    \setlength{\unitlength}{921.25984252bp}%
    \ifx\svgscale\undefined%
      \relax%
    \else%
      \setlength{\unitlength}{\unitlength * \real{\svgscale}}%
    \fi%
  \else%
    \setlength{\unitlength}{\svgwidth}%
  \fi%
  \global\let\svgwidth\undefined%
  \global\let\svgscale\undefined%
  \makeatother%
  \begin{picture}(1,0.33556972)%
    \lineheight{1}%
    \setlength\tabcolsep{0pt}%
    \put(0,0){\includegraphics[width=\unitlength,page=1]{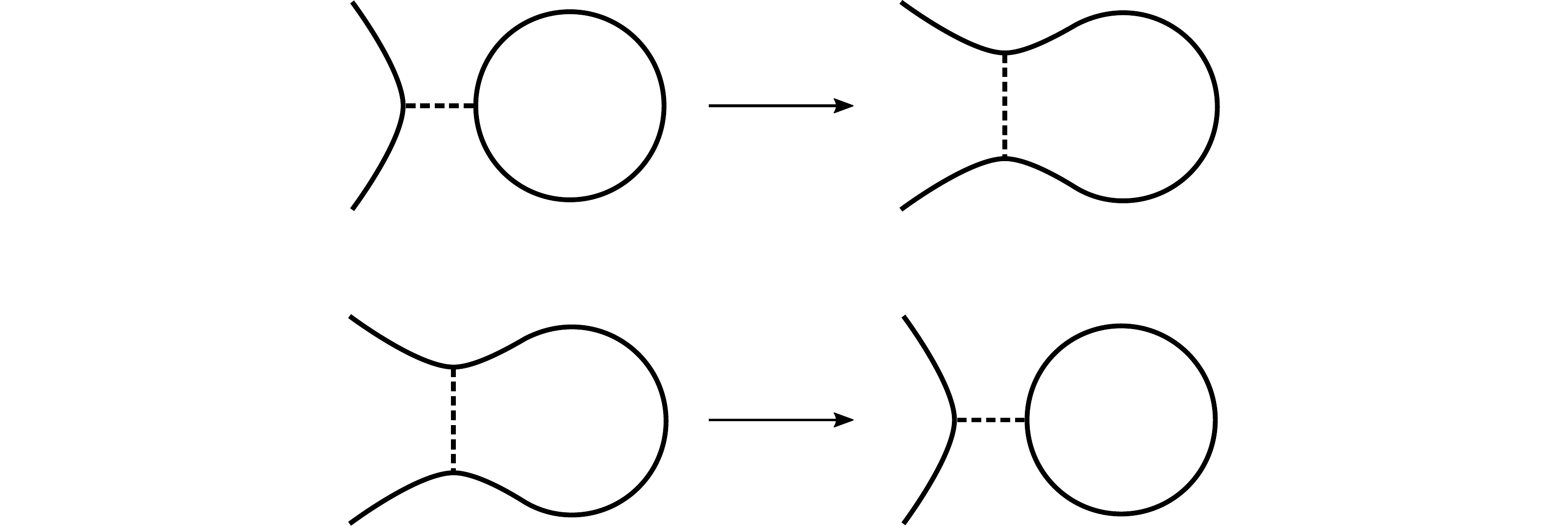}}%
    \put(0.79074484,0.30486852){\color[rgb]{0,0,0}\makebox(0,0)[lt]{\lineheight{1.25}\smash{\begin{tabular}[t]{l}$\{-1,1\}$\end{tabular}}}}%
    \put(0.78725579,0.10433936){\color[rgb]{0,0,0}\makebox(0,0)[lt]{\lineheight{1.25}\smash{\begin{tabular}[t]{l}$\{-1\}$\end{tabular}}}}%
    \put(0.30103856,0.70825113){\color[rgb]{0,0,0}\makebox(0,0)[lt]{\begin{minipage}{0.1522999\unitlength}\raggedright \end{minipage}}}%
    \put(0.42439663,0.30486852){\color[rgb]{0,0,0}\makebox(0,0)[lt]{\lineheight{1.25}\smash{\begin{tabular}[t]{l}$\{1\}$\end{tabular}}}}%
    \put(0.42090771,0.10433938){\color[rgb]{0,0,0}\makebox(0,0)[lt]{\lineheight{1.25}\smash{\begin{tabular}[t]{l}$\{1,-1\}$\end{tabular}}}}%
  \end{picture}%
\endgroup%

  \caption{The complexes for both versions of the first Reidemeister move}\label{R1}
\end{figure}

Figure~\ref{R1} shows the complexes associated to the one-crossing tangles involved in the positive and negative Reidemeister I moves, respectively. Using Proposition~\ref{arceliminate} and Proposition~\ref{loopeliminate}, the complex for the positive Reidemeister I move is $0$-homotopy equivalent to the following complex:
\begin{equation*}
\xymatrix{
\Z[x]\{2\} \ar@{}[d]|{\oplus} \ar[r]^{1} & \Z[x]\{-1,1\} \\
\Z[x] \ar[ur]_{x} &
}
\end{equation*}
In this diagram, $x$ is the variable associated to the edge at the top of the diagram. To verify that the differentials are as described, either use Lemma~\ref{unique} or compute directly. This complex is then $1$-homotopy equivalent to a single copy of $\Z[x]$, which is the complex of a crossingless arc.
Similarly, the complex for the negative Reidemeister I move is $0$-homotopy equivalent to this complex:
\begin{equation*}
\xymatrix{
\Z[x]\{1,-1\} \ar[r]^{x} \ar[dr]_{1} & \Z[x] \ar@{}[d]|{\oplus} \\
 & \Z[x]\{-2\}
}
\end{equation*}
which is again $1$-homotopy equivalent to $\Z[x]$.

\begin{figure}[h]
  \centering
  \def\svgwidth{\columnwidth}
\begingroup%
  \makeatletter%
  \providecommand\color[2][]{%
    \errmessage{(Inkscape) Color is used for the text in Inkscape, but the package 'color.sty' is not loaded}%
    \renewcommand\color[2][]{}%
  }%
  \providecommand\transparent[1]{%
    \errmessage{(Inkscape) Transparency is used (non-zero) for the text in Inkscape, but the package 'transparent.sty' is not loaded}%
    \renewcommand\transparent[1]{}%
  }%
  \providecommand\rotatebox[2]{#2}%
  \newcommand*\fsize{\dimexpr\f@size pt\relax}%
  \newcommand*\lineheight[1]{\fontsize{\fsize}{#1\fsize}\selectfont}%
  \ifx\svgwidth\undefined%
    \setlength{\unitlength}{921.25984252bp}%
    \ifx\svgscale\undefined%
      \relax%
    \else%
      \setlength{\unitlength}{\unitlength * \real{\svgscale}}%
    \fi%
  \else%
    \setlength{\unitlength}{\svgwidth}%
  \fi%
  \global\let\svgwidth\undefined%
  \global\let\svgscale\undefined%
  \makeatother%
  \begin{picture}(1,0.65297129)%
    \lineheight{1}%
    \setlength\tabcolsep{0pt}%
    \put(0,0){\includegraphics[width=\unitlength,page=1]{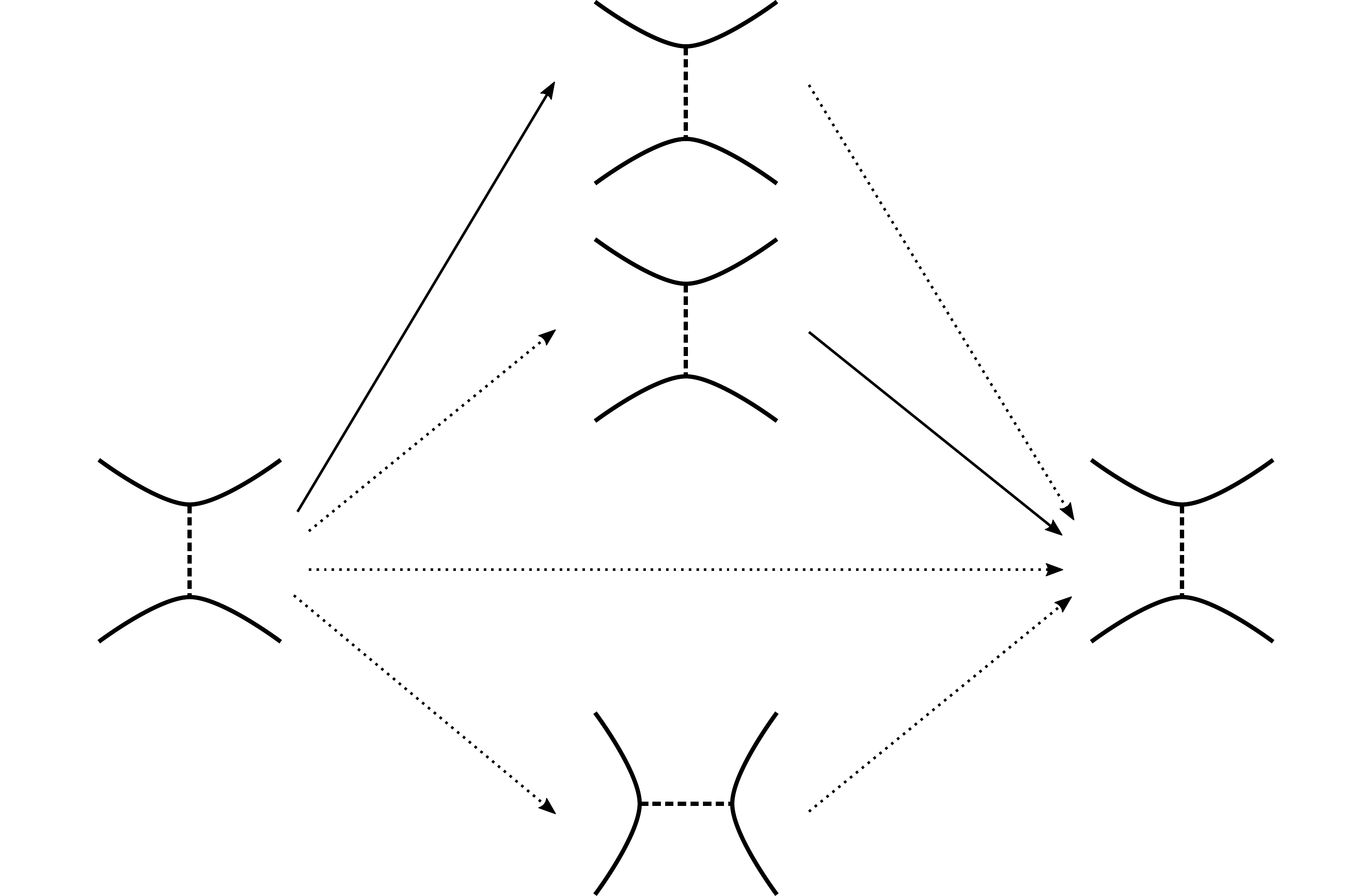}}%
    \put(0.10460009,0.32914761){\color[rgb]{0,0,0}\makebox(0,0)[lt]{\lineheight{1.25}\smash{\begin{tabular}[t]{l}$\{2,-1\}$\end{tabular}}}}%
    \put(0.5699009,0.63835787){\color[rgb]{0,0,0}\makebox(0,0)[lt]{\lineheight{1.25}\smash{\begin{tabular}[t]{l}$\{-1\}$\end{tabular}}}}%
    \put(0.57225418,0.46483827){\color[rgb]{0,0,0}\makebox(0,0)[lt]{\lineheight{1.25}\smash{\begin{tabular}[t]{l}$\{1\}$\end{tabular}}}}%
    \put(0.82798839,0.32914761){\color[rgb]{0,0,0}\makebox(0,0)[lt]{\lineheight{1.25}\smash{\begin{tabular}[t]{l}$\{-2,1\}$\end{tabular}}}}%
  \end{picture}%
\endgroup%

  \caption{The $0$-simplified complex for the second Reidemeister move. The vertical factorization is the direct sum of the shown planar tangle factorizations, and the arrows represent differentials that increase the filtration grading.}\label{R2}
\end{figure}

For the Reidemeister II move, beginning with the complex associated to the two-crossing tangle involved and simplifying using Propositions~\ref{arceliminate},~\ref{slideswitch},~and~\ref{loopeliminate} produces a complex of the shape shown in Figure~\ref{R2}. Using Lemma~\ref{unique}, the maps represented by solid arrows in that diagram can be shown to be homotopic (up to sign) to the identity, so the whole complex is $0$-homotopy equivalent to one in which those maps are $\pm 1$. The resulting complex is then $1$-homotopy equivalent to the bottom summand of the central column, which is the other side of the second Reidemeister move. 

\begin{figure}[h]
  \centering
  \def\svgwidth{\columnwidth}
\begingroup%
  \makeatletter%
  \providecommand\color[2][]{%
    \errmessage{(Inkscape) Color is used for the text in Inkscape, but the package 'color.sty' is not loaded}%
    \renewcommand\color[2][]{}%
  }%
  \providecommand\transparent[1]{%
    \errmessage{(Inkscape) Transparency is used (non-zero) for the text in Inkscape, but the package 'transparent.sty' is not loaded}%
    \renewcommand\transparent[1]{}%
  }%
  \providecommand\rotatebox[2]{#2}%
  \newcommand*\fsize{\dimexpr\f@size pt\relax}%
  \newcommand*\lineheight[1]{\fontsize{\fsize}{#1\fsize}\selectfont}%
  \ifx\svgwidth\undefined%
    \setlength{\unitlength}{892.91338583bp}%
    \ifx\svgscale\undefined%
      \relax%
    \else%
      \setlength{\unitlength}{\unitlength * \real{\svgscale}}%
    \fi%
  \else%
    \setlength{\unitlength}{\svgwidth}%
  \fi%
  \global\let\svgwidth\undefined%
  \global\let\svgscale\undefined%
  \makeatother%
  \begin{picture}(1,0.21288581)%
    \lineheight{1}%
    \setlength\tabcolsep{0pt}%
    \put(0,0){\includegraphics[width=\unitlength,page=1]{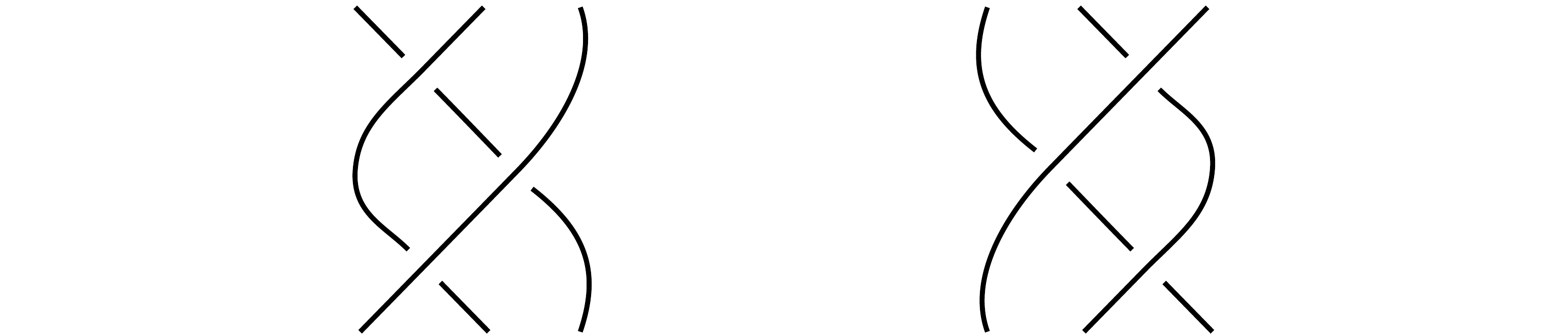}}%
    \put(0.15839003,0.10292235){\color[rgb]{0,0,0}\makebox(0,0)[lt]{\lineheight{1.25}\smash{\begin{tabular}[t]{l}$D_1$\end{tabular}}}}%
    \put(0.57332395,0.10292235){\color[rgb]{0,0,0}\makebox(0,0)[lt]{\lineheight{1.25}\smash{\begin{tabular}[t]{l}$D_2$\end{tabular}}}}%
  \end{picture}%
\endgroup%

  \caption{The tangles involved in the third Reidemeister move}\label{R3_tangle}
\end{figure}

Finally, we need to show that $C(D_1)$ and $C(D_2)$ are isomorphic for the two tangle diagrams $D_1$ and $D_2$ depicted in Figure~\ref{R3_tangle}, the two sides of the Reidemeister III move. For $D_1$, the full complex $C(D_1)$ is shown in Figure~\ref{R3_orig}. The shaded subcomplex is isomorphic to a tensor product of the complex used in the proof of Reidemeister II invariance and one of the basic one-arc complexes, so is $1$-homotopy equivalent to it's upper right entry. Applying this isomorphism and Proposition~\ref{arceliminate}, the whole complex is $1$-homotopy equivalent to a complex as shown in Figure~\ref{R3_simple}. Call these simplified complexes $C(D_1)'$ and $C(D_2)'$. These have the same vertical factorization and are supported in three adjacent filtration gradings, so it suffices to make the higher differentials $d_1$ and $d_2$ are equal. 

The entries of $d_1$ will be chain maps in the places shown by solid arrows in Figure~\ref{R3_simple}. For each of these pairs of planar diagrams, gluing one to the mirror of the other produces two closed components, so since $d_1$ has degree $-3$ with respect to the internal grading Proposition~\ref{homspace} shows that the group of such chain maps up to homotopy is isomorphic to $\Z$. Each of these maps in $C(D_1)'$ or $C(D_2)'$ is conjugate to a saddle morphism by isomorphisms, so Lemma~\ref{unique} applies to show that the entries of $d_1$ in $C(D_1)'$ and $C(D_2)'$ are homotopic up to sign. Therefore, by Lemma~\ref{dmodify}, $C(D_2)'$ is $0$-homotopy equivalent to a complex $C(D_2)''$ that has $d_0$ and $d_1$ identical to $C(D_1)'$.

Now that the two complexes have $d_0$ and $d_1$ identical, Lemma~\ref{dmodify} shows that the difference between the respective $d_2$'s in $C(D_1)'$ and $C(D_2)''$ must be a chain map along the dotted arrows in Figure~\ref{R3_simple}. Gluing the planar diagram in the leftmost column to the mirror of either of the diagrams in the rightmost column gives one closed component, so Proposition~\ref{homspace} shows that any chain map of degree $-3$ following one of the dotted arrows is nullhomotopic. Therefore, $C(D_1)'$ and $C(D_2)''$ are $0$-homotopy equivalent by another application of Lemma~\ref{dmodify}, so the original complexes $C(D_1)$ and $C(D_2)$ are $1$-homotopy equivalent.

\begin{figure}[h]
  \centering
  \def\svgwidth{\columnwidth}
\begingroup%
  \makeatletter%
  \providecommand\color[2][]{%
    \errmessage{(Inkscape) Color is used for the text in Inkscape, but the package 'color.sty' is not loaded}%
    \renewcommand\color[2][]{}%
  }%
  \providecommand\transparent[1]{%
    \errmessage{(Inkscape) Transparency is used (non-zero) for the text in Inkscape, but the package 'transparent.sty' is not loaded}%
    \renewcommand\transparent[1]{}%
  }%
  \providecommand\rotatebox[2]{#2}%
  \newcommand*\fsize{\dimexpr\f@size pt\relax}%
  \newcommand*\lineheight[1]{\fontsize{\fsize}{#1\fsize}\selectfont}%
  \ifx\svgwidth\undefined%
    \setlength{\unitlength}{892.91338583bp}%
    \ifx\svgscale\undefined%
      \relax%
    \else%
      \setlength{\unitlength}{\unitlength * \real{\svgscale}}%
    \fi%
  \else%
    \setlength{\unitlength}{\svgwidth}%
  \fi%
  \global\let\svgwidth\undefined%
  \global\let\svgscale\undefined%
  \makeatother%
  \begin{picture}(1,0.77952111)%
    \lineheight{1}%
    \setlength\tabcolsep{0pt}%
    \put(0,0){\includegraphics[width=\unitlength,page=1]{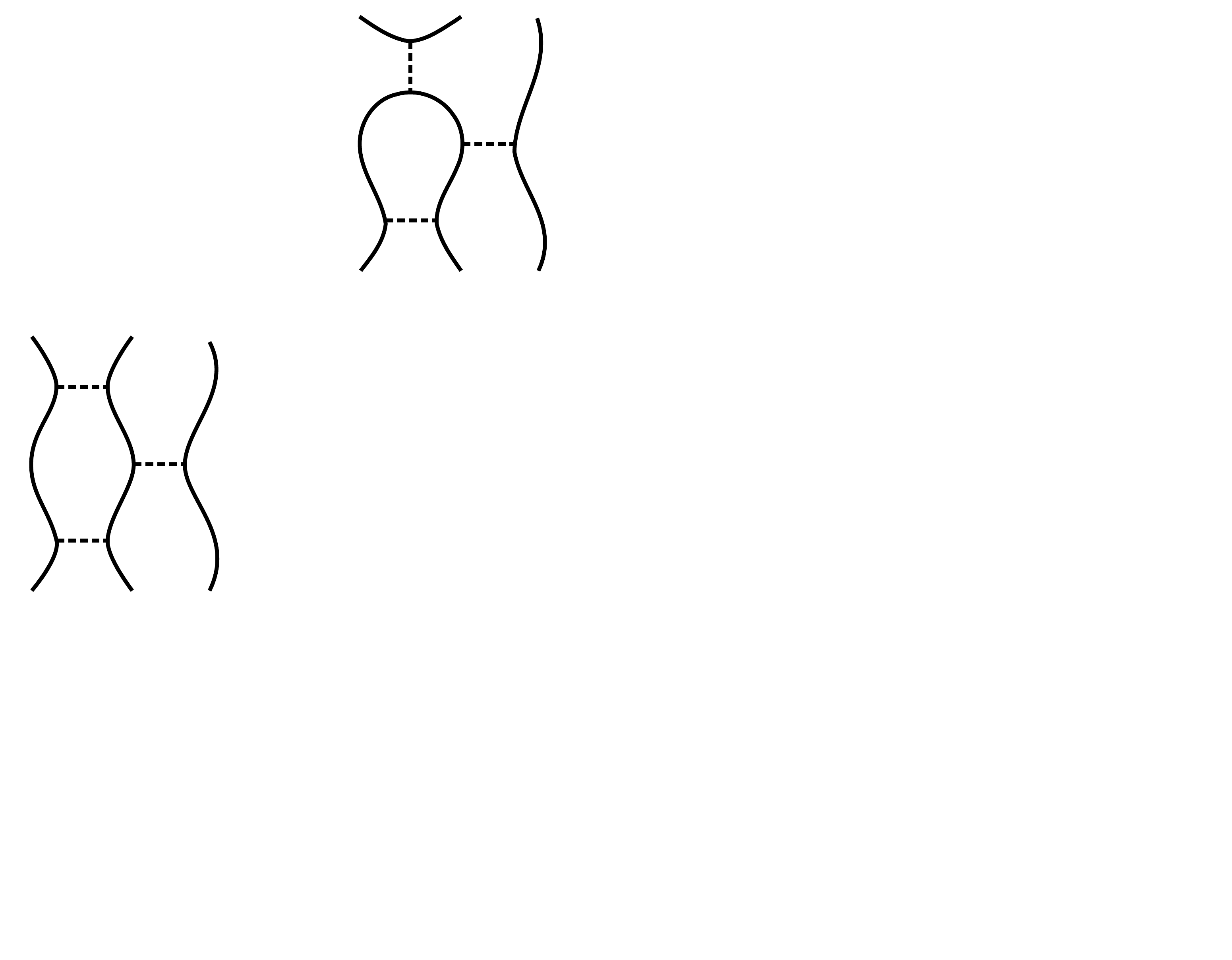}}%
    \put(0.30958046,0.01103933){\color[rgb]{0,0,0}\makebox(0,0)[lt]{\lineheight{1.25}\smash{\begin{tabular}[t]{l}$\{1,1\}$\end{tabular}}}}%
    \put(0.58496301,0.01103933){\color[rgb]{0,0,0}\makebox(0,0)[lt]{\lineheight{1.25}\smash{\begin{tabular}[t]{l}$\{-1,2\}$\end{tabular}}}}%
    \put(0.86034574,0.01103933){\color[rgb]{0,0,0}\makebox(0,0)[lt]{\lineheight{1.25}\smash{\begin{tabular}[t]{l}$\{-3,3\}$\end{tabular}}}}%
    \put(0.03407188,0.01103933){\color[rgb]{0,0,0}\makebox(0,0)[lt]{\lineheight{1.25}\smash{\begin{tabular}[t]{l}$\{3\}$\end{tabular}}}}%
    \put(0,0){\includegraphics[width=\unitlength,page=2]{R3_orig.pdf}}%
  \end{picture}%
\endgroup%

  \caption{Original complex for one side of the third Reidemeister move. To get the other side, rotate each planar tangle $180^\circ$. The shifts in the filtration grading assume that all strands are oriented downwards; changing orientations just changes them by an overall constant.}\label{R3_orig}
\end{figure}

\begin{figure}[h]
  \centering
  \def\svgwidth{\columnwidth}
\begingroup%
  \makeatletter%
  \providecommand\color[2][]{%
    \errmessage{(Inkscape) Color is used for the text in Inkscape, but the package 'color.sty' is not loaded}%
    \renewcommand\color[2][]{}%
  }%
  \providecommand\transparent[1]{%
    \errmessage{(Inkscape) Transparency is used (non-zero) for the text in Inkscape, but the package 'transparent.sty' is not loaded}%
    \renewcommand\transparent[1]{}%
  }%
  \providecommand\rotatebox[2]{#2}%
  \newcommand*\fsize{\dimexpr\f@size pt\relax}%
  \newcommand*\lineheight[1]{\fontsize{\fsize}{#1\fsize}\selectfont}%
  \ifx\svgwidth\undefined%
    \setlength{\unitlength}{892.91338583bp}%
    \ifx\svgscale\undefined%
      \relax%
    \else%
      \setlength{\unitlength}{\unitlength * \real{\svgscale}}%
    \fi%
  \else%
    \setlength{\unitlength}{\svgwidth}%
  \fi%
  \global\let\svgwidth\undefined%
  \global\let\svgscale\undefined%
  \makeatother%
  \begin{picture}(1,0.59698782)%
    \lineheight{1}%
    \setlength\tabcolsep{0pt}%
    \put(0,0){\includegraphics[width=\unitlength,page=1]{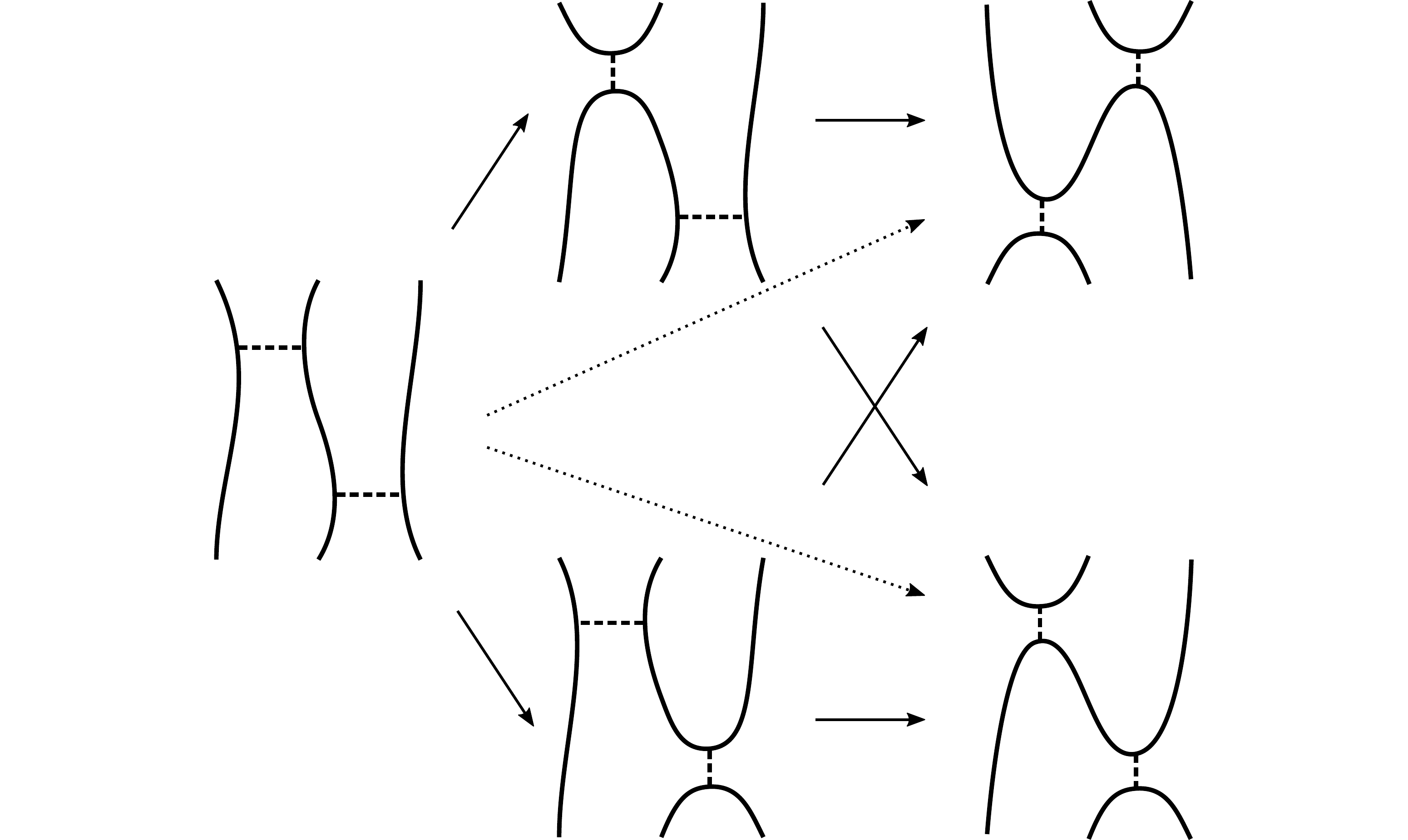}}%
    \put(0.55318722,0.56955443){\color[rgb]{0,0,0}\makebox(0,0)[lt]{\lineheight{1.25}\smash{\begin{tabular}[t]{l}$\{1,1\}$\end{tabular}}}}%
    \put(0.55318722,0.01038969){\color[rgb]{0,0,0}\makebox(0,0)[lt]{\lineheight{1.25}\smash{\begin{tabular}[t]{l}$\{1,1\}$\end{tabular}}}}%
    \put(0.8567681,0.57075437){\color[rgb]{0,0,0}\makebox(0,0)[lt]{\lineheight{1.25}\smash{\begin{tabular}[t]{l}$\{-1,2\}$\end{tabular}}}}%
    \put(0.8567681,0.01758922){\color[rgb]{0,0,0}\makebox(0,0)[lt]{\lineheight{1.25}\smash{\begin{tabular}[t]{l}$\{-1,2\}$\end{tabular}}}}%
    \put(0.30843072,0.36916705){\color[rgb]{0,0,0}\makebox(0,0)[lt]{\lineheight{1.25}\smash{\begin{tabular}[t]{l}$\{3\}$\end{tabular}}}}%
  \end{picture}%
\endgroup%

  \caption{The shape of the simplified complex for the third Reidemeister move.}\label{R3_simple}
\end{figure}

\section{Knot and link invariants}

Let $K$ be an oriented link in $S^3$, and $D$ a diagram for $K$. By Proposition~\ref{tangleinvariant}, the multifactorization $C(K) = C(D)\{0,-n_-\}$ is an invariant of $K$ up to $1$-homotopy equivalence. Since $K$ is a tangle with empty boundary, the potential of this multifactorization is zero and the boundary ring is just $\Z$, so $C(K)$ can be alternatively seen as a filtered chain complex over $\Z$. In fact, $C(K)$ carries an invariant action of a larger ring.

\begin{lemma}\label{linkmodule}
If $K$ has $n$ components, the ring $\Z[x_1,\dots,x_n]$ acts on $C(K)$ (with each variable corresponding to a component of $K$). This action commutes with with the isomorphisms coming from Reidemeister moves up to $1$-homotopy, and the action of each element $x_i^2-x_j^2$ is $1$-nullhomotopic.
\end{lemma}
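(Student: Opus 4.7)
The plan is to define the action by picking an edge $e_i$ on each component $K_i$ of $K$ and letting $x_i$ act on $C(D)$ by multiplication by the edge variable $x_{e_i} \in R(D)$. Since $R(D)$ is commutative these multiplications commute, so this gives an honest action of $\Z[x_1,\dots,x_n]$ on $C(D)$ (and hence on $C(K) = C(D)\{0,-n_-\}$). Three things remain to check: (a) the homotopy class of this action is independent of the choices of the $e_i$; (b) it commutes with the Reidemeister isomorphisms of Proposition~\ref{tangleinvariant} up to $1$-homotopy; and (c) each $x_i^2 - x_j^2$ is $1$-nullhomotopic.

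For (a), any two edges on the same component are connected by a path passing through a sequence of elementary subdiagrams, so it suffices to compute the change across one of them. At a planar arc the difference of edge variables is an entry $a_i$ or $b_i$ of the underlying Koszul factorization and is $0$-nullhomotopic by Lemma~\ref{koszulhomot}. At a crossing a direct calculation shows the difference equals $\pm(x_1 - x_2 + x_3 - x_0)$, the third factor of the local potential $w = (x_0-x_2)(x_1-x_3)(x_1-x_2+x_3-x_0)$. Inspecting equation~\eqref{elementarysaddle}, one sees that this same factor appears as the matrix entry of the saddle map $s$ on the ``$f$-component''; this suggests constructing the $1$-homotopy as $H = h_{-1} + h_0$ on $C(D_\pm)$, where $h_{-1}$ is a shifted, normalized copy of the reverse saddle $s_{D_1 \to D_0}$ and $h_0$ is assembled from the Koszul homotopies $h_a,h_b$ of Lemma~\ref{koszulhomot}, with coefficients pinned down by requiring $DH + HD = (x_1-x_2+x_3-x_0) \cdot I$. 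I expect this explicit construction to be the main technical obstacle, since it involves balancing contributions from filtration degrees $-1$ and $0$ and keeping track of signs.

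Property (b) is then essentially automatic: each R-isomorphism of the previous section is built from Koszul basis changes (Proposition~\ref{koszulbasischange}), the special deformation retracts of Propositions~\ref{linex}--\ref{sqex}, and the perturbation lemma (Proposition~\ref{perturb}), all of which are $R(D)$-linear, so each R-isomorphism commutes strictly with multiplication by any fixed edge variable. The only subtlety is that the identification of edges in $D$ with edges in $D'$ may send a chosen $e_i$ to a different edge on $K_i$, in which case the discrepancy is a difference of edge variables on $K_i$ and is $1$-nullhomotopic by (a).

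For (c), the planar case is contained in Proposition~\ref{closedplanar}: after arranging dotted arcs into a chain connecting the components, each $x_\ell^2 - x_{\ell+1}^2$ appears directly as a $b$-entry of the Koszul factorization, so acts $0$-nullhomotopically by Lemma~\ref{koszulhomot}. For a general diagram, use Proposition~\ref{slideswitch} to slide the dotted arcs until the chosen edges $e_i$ and $e_j$ are both adjacent to a common region with variable $z$; then $x_{e_i}$ and $x_{e_j}$ take the form $y_i - z$ and $y_j - z$ for region variables $y_i, y_j$, and $x_{e_i}^2 - x_{e_j}^2 = (y_i - y_j)(y_i + y_j - 2z)$. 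Adding a dotted arc whose two sub-regions are the $y_i$- and $y_j$-regions produces, by Proposition~\ref{arceliminate}, a complex that is $0$-homotopy equivalent to $C(D)$ and in which $y_i - y_j$ appears as an $a$-entry of the Koszul factorization; Lemma~\ref{koszulhomot} then gives a $0$-nullhomotopy of multiplication by $y_i - y_j$, and multiplying through by $(y_i + y_j - 2z)$ produces the required nullhomotopy of $x_{e_i}^2 - x_{e_j}^2$.
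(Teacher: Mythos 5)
Your plan follows the paper's proof essentially point for point: define the action by edge variables at chosen basepoints, prove independence of choices by local analysis at each elementary piece of the diagram (this is the paper's Lemma~\ref{variablehomot}), observe that a Reidemeister move away from the basepoints acts on a disjoint tensor factor so commutes with the action, and reduce $x_i^2-x_j^2$ to a Koszul entry by inserting a dotted arc and applying Lemma~\ref{koszulhomot}. Two spots in your version need tightening.

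In part (a), at a crossing the ``main technical obstacle'' you anticipate evaporates: since $C(D_\pm)$ is a mapping cone of the saddle $s_{D_0\to D_1}$ and the composite $s_{D_1\to D_0}\circ s_{D_0\to D_1}$ (and the reverse composite) is exactly multiplication by $x_1-x_2+x_3-x_0$, the reverse saddle $s_{D_1\to D_0}$ alone, placed in the upper-right corner of the cone, is already the required $1$-homotopy. No $h_0$ term built from $h_a,h_b$ is needed, and trying to add one will just complicate the verification.

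In part (c), the step ``use Proposition~\ref{slideswitch} to slide the dotted arcs until the chosen edges $e_i$ and $e_j$ are both adjacent to a common region'' does not work as stated: sliding a dotted arc changes only the decoration, not the underlying diagram $D_0$, so it cannot alter which regions an edge of $D_0$ borders. For edges on two different components you must first use Reidemeister moves to bring a strand of $K_i$ parallel and adjacent to a strand of $K_j$, then move the basepoints to that parallel pair (which you are entitled to do by (a) and (b)), and only then insert a dotted arc between them. Once that is done, the inserted arc's Koszul factor directly gives $x_{e_i}^2-x_{e_j}^2$ as a $b$-entry (compare the identity $(x_2-x_1)(x_1+x_2-2x_0)=(x_2-x_0)^2-(x_0-x_1)^2$ in the proof of Proposition~\ref{loopeliminate}), so Lemma~\ref{koszulhomot} applies immediately; your alternative of factoring through the $a$-entry $y_i-y_j$ and scaling by the central element $y_i+y_j-2z$ also works, and is equivalent.
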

\begin{proof}
In a diagram $D$ of $K$, choose a collection of $n$ basepoints $p_1,\dots,p_n$ with one on each component. For each $p_i$, let $r_i$ and $\ell_i$ be the regions adjacent to $p_i$ on the right and left according to the orientation of $K$, and let $x_i$ act on $C(K)$ by multiplication by $x_{r_i} - x-{\ell_i}$ (this is exactly the element $x_e$ discussed in the definition of $R(D)$, for $e$ the edge containing the basepoint oriented according to the orientation of $K$). 

By Lemma~\ref{variablehomot} (below), the action of $x_i$ is independent of the choice of basepoint on a component up to $1$-homotopy. If a Reidemeister move is performed away from the basepoints, the homotopy equivalence acts on a different tensor factor than the action of any $x_i$, so commutes with the action. By moving the basepoints, they can always be kept away from a Reidemeister move, so the action is an invariant of the knot.

To see that each $x_i^2-x_j^2$ acts nullhomotopically, first use Reidemeister moves to put a strand from the $i$th component adjacent to a strand from the $j$th, and move the basepoints on those components to the parallel pair of strands. Using Lemma~\ref{arceliminate}, a dotted arc can be inserted between these strands, and then the second part of Lemma~\ref{koszulhomot} gives the desired homotopy.
\end{proof}

\begin{lemma}\label{variablehomot}
Suppose that $e$ and $f$ are two oriented points on the same strand of a tangle diagram $D$ with compatible orientations. Then multiplication by $x_e$ and $x_f$ give $1$-homotopic endomorphisms of $C(D)$.
\end{lemma}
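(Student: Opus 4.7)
The plan is to reduce to local elementary moves along the strand and exhibit a $1$-nullhomotopy of $x_e - x_f$ in each case. Any path along the strand from $e$ to $f$ can be decomposed into finitely many elementary steps: (i) sliding within a single edge of $D_0$, in which case $x_e = x_f$ on the nose; (ii) crossing the endpoint of a dotted arc attached to the strand; or (iii) passing through a crossing along one of its two strands. Step (i) is trivial, and since compositions of $1$-nullhomotopies are $1$-nullhomotopies, it suffices to handle (ii) and (iii) in isolation.

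For step (ii), a dotted arc that ends on the strand divides an adjacent region into two subregions, and the change $x_e - x_f$ is exactly the difference of the labels of these two subregions. This difference is, up to sign, precisely the $a$-entry of the Koszul factor $K(a,b)$ that the arc contributes to the tensor decomposition $C(D) = \bigotimes C(D_e)$. By Lemma~\ref{koszulhomot}, multiplication by $a$ is $0$-nullhomotopic on this Koszul factor, and extending the nullhomotopy by the identity on the remaining tensor factors yields a $0$-nullhomotopy -- and hence a $1$-nullhomotopy -- of $x_e - x_f$ on $C(D)$.

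Step (iii) is the case that genuinely requires a $1$-homotopy. A short computation with the four region labels $x_0, x_1, x_2, x_3$ around the crossing shows that, regardless of the sign of the crossing and of which strand is followed, $x_e - x_f = \pm c$ where $c = x_1 - x_2 + x_3 - x_0$ is the third factor of the local potential. The multifactorization $C(D_\pm)$ is a cone whose $d_1$ is the saddle map $s_{D_0 \to D_1}$ from Equation~\eqref{elementarysaddle}, and crucially multiplication by $c$ itself appears as one of the two matrix entries of this saddle (the other being $\pm 1$). The plan is to build $h_{-1}: C(D_\pm) \to C(D_\pm)\{2\}$ of filtration degree $-1$, supported on the off-diagonal block that reverses the direction of $s$, chosen as a kind of pseudo-inverse of the saddle so that $d_1 h_{-1} + h_{-1} d_1$ equals multiplication by $\pm c$ on the vertical factorization. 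The residual defect $d_0 h_{-1} + h_{-1} d_0$ has entries that are $R$-multiples of $x_0 - x_2$ and $x_1 - x_3$, which are the $a$-factors of $C(D_0)$ and $C(D_1)$; Lemma~\ref{koszulhomot} provides nullhomotopies for multiplication by these, and they assemble inductively into the higher components $h_i$ for $i \ge 0$.

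The main obstacle is exactly this bookkeeping in step (iii): verifying that the higher $h_i$'s can be chosen consistently to cancel every residual. The key observation making this possible is that all residuals lie in the ideal generated by the nullhomotopic $a$'s and $b$'s of the Koszul factorizations $C(D_0)$ and $C(D_1)$, so Lemma~\ref{koszulhomot} inductively supplies all the corrections needed.
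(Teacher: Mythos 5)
Your reduction to elementary subdiagrams and your treatment of the planar cases matches the paper exactly: in $D_0$ or $D_1$ the difference $x_e-x_f$ is the $a$-entry of the local Koszul factor, and Lemma~\ref{koszulhomot} (tensored with the identity on the other factors) gives the desired $0$-nullhomotopy.

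The gap is in your step (iii). You are correct that the opposite saddle $s_{D_1\to D_0}$ is the right thing to put in filtration degree $-1$, but you have overcomplicated what happens next. The saddle maps are chain maps between the \emph{vertical} factorizations $(C(D_0),d_0)$ and $(C(D_1),d_0)$, so the term $d_0 h_{-1} + h_{-1} d_0$ that you call a ``residual defect'' is actually \emph{zero}, not merely ``an $R$-multiple of $x_0-x_2$ and $x_1-x_3$.'' The only contribution to $Dh_{-1}+h_{-1}D$ comes from $d_1 = s_{D_0\to D_1}$, and the compositions $s_{D_0\to D_1}\circ s_{D_1\to D_0}$ and $s_{D_1\to D_0}\circ s_{D_0\to D_1}$ are both multiplication by $\pm(x_0-x_1+x_2-x_3)$. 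So $h_{-1} = s_{D_1\to D_0}$ (with all higher $h_i = 0$) is already an exact $1$-nullhomotopy; there is nothing to correct inductively. By introducing the speculative hierarchy of $h_i$'s ``supplied by Lemma~\ref{koszulhomot}'' you are asserting an induction you haven't carried out, and whose convergence you have no argument for beyond noting that each residual lies in a nullhomotopic ideal -- which does not by itself guarantee the process terminates or stabilizes degree by degree. The paper's proof avoids all of this: it simply observes that the opposite saddle is the nullhomotopy, citing the analogous construction of Hedden and Ni for ordinary Khovanov homology.

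Two smaller points. First, what you want is that the \emph{sum} of $1$-nullhomotopies of $x_e - x_{e'}$ along a chain of elementary steps is a $1$-nullhomotopy of the telescoping total; ``compositions of $1$-nullhomotopies'' is not the right phrasing. Second, you should be explicit that extending a local nullhomotopy across the tensor decomposition $C(D) = C(D_e)\otimes C_{\mathrm{out}}$ requires the Koszul sign rule, since the nullhomotopy and the remaining differentials are odd; this works, but it is a step worth stating.
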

\begin{proof}
It suffices to prove this for each of the basic tangles in Figure~\ref{basics}. For $D_0$ and $D_1$, any difference $x_e-x_f$ as in the statement is equal to $\pm x_0 \mp x_2$ or $\pm x_1 \mp x_3$, respectively, so is nullhomotopic by Lemma~\ref{koszulhomot}. 
For the crossings, the difference $x_e-x_f$ will equal $\pm(x_0-x_1+x_2-x_3)$. In this case, the complex is the mapping cone of a saddle map, and the appropriate saddle map going the other direction will be a nullhomotopy of $x_0-x_1+x_2-x_3$. This is essentially the same homotopy defined in \cite{hedden2013khovanov} in the context of ordinary Khovanov homology. 
\end{proof}

If $K$ additionally has one of its components chosen, we can define a reduced homology.

\begin{defn}
The reduced complex associated to $K$, $\bar{C}(K,i)$, is the quotient $C(K)/x_i$. 
\end{defn}

When the basepoint is implicit, for instance when $K$ is a knot, we will write just $\bar{C}(K)$. In what follows, both $C(K)$ and $\bar{C}(K)$ will be treated as filtered chain complexes, the underlying unfiltered complexes will be call the total complexes, and the homology of the total complex the total homology. Also, changing the overall orientation of $K$ doesn't change the complexes $C(K)$ and $\bar{C}(K)$ and only changes the sign of the action of each $x_i$, so these can be viewed as invariants of relatively oriented links (e.g. unoriented knots) when these signs are unimportant.

\subsection{Cobordism maps}

\subsubsection{Crossing changes and nonorientable bands}

Suppose that $T_+$ and $T_-$ are tangles with diagrams that are identical except for one crossing change, in which $T_+$ has a positive crossing and $T_-$ has a negative crossing. Then $C(T_+)$ and $C(T_-)$ can both be decomposed as a tensor product of a common factor coming from all crossings except the changed one with the factorization associated to a positive or negative crossing, respectively. In the following diagram, the first row is the factorization of a negative crossing, the second row is the factorization of a positive crossing, and the vertical and diagonal arrows (after tensoring with the identity on the common factor) represent a linear map $c_+: C(T_-) \to C(T_+)$:
\begin{equation}\label{poscrossingchange}
\xymatrix@C=60pt@R=40pt{
C(D_1)\{1,-1\} \ar[r]^{s_{D_1 \to D_0}} \ar[dr]_{I_{10}} & C(D_0)\{-1\} \ar[d]^{x_0-x_1-x_2+x_3} \ar[dr]^{I_{01}} & \\
 & C(D_0)\{1\} \ar[r]_{s_{D_0 \to D_1}} & C(D_1)\{-1,1\}
}
\end{equation}

In the above diagram, the maps $I_{01}$ and $I_{10}$ are given by the downwards and upward pointing arrows here, respectively:
\begin{equation*}
\xymatrix@C=100pt@R=40pt{
R\{1\} \ar[r]^{x_0-x_2} \ar@/^/[d]^{1}  & R \ar[r]^{(x_1-x_3)(x_1-x_2+x_3-x_0)} \ar@/^/[d]^{1} & \\
R\{1\} \ar[r]^{x_1-x_3} \ar@/^/[u]^{1} & R \ar[r]^{(x_0-x_2)(x_1-x_2+x_3-x_0)}  \ar@/^/[u]^{1} & \\
}
\end{equation*}

In the other direction,	a very similar formula defines a linear map ${c_-: C(T_+) \to C(T_-)}$:
\begin{equation}\label{negcrossingchange}
\xymatrix@C=60pt@R=40pt{
 & C(D_0)\{1\} \ar[r]_{s_{D_0 \to D_1}} \ar[dl]_{I_{01}} & C(D_1)\{-1,1\} \ar[dll]^{-x_0+x_1+x_2-x_3} \ar[dl]^{I_{10}} \\
C(D_1)\{1,-1\} \ar[r]^{s_{D_1 \to D_0}} & C(D_0)\{-1\} &
}
\end{equation}
As written, this is not a filtered map, but since it decreases the filtration grading by at most two it does give a filtered map $c_-: C(T_+) \to C(T_-)\{0,2\}$.

\begin{lemma}
The maps $I_{01}$ and $I_{10}$ satisfy the identities
\begin{align*}
I_{01} d_0 - d_1 I_{01} &= (-x_0+x_1+x_2-x_3) s_{D_0 \to D_1} \\
I_{10} d_1 - d_0 I_{10} &= (x_0-x_1-x_2+x_3) s_{D_1 \to D_0} 
\end{align*}
where $d_0$ and $d_1$ are the differentials of $C(D_0)$ and $C(D_1)$, respectively.
\end{lemma}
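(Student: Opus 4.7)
The plan is a direct computation on the two generators of the length-$1$ Koszul factorizations $C(D_0)$ and $C(D_1)$, both of which have underlying module $R\{1\} \oplus R$. Denoting the generator of $R\{1\}$ by $f$ and of $R$ by $e$, the differentials read
\begin{align*}
d_0(f) &= (x_0-x_2)e, & d_0(e) &= (x_1-x_3)(x_1-x_2+x_3-x_0)f, \\
d_1(f) &= (x_1-x_3)e, & d_1(e) &= (x_0-x_2)(x_1-x_2+x_3-x_0)f,
\end{align*}
while $I_{01}$ and $I_{10}$ are the identity on generators, and the elementary saddle diagram (\ref{elementarysaddle}) gives $s_{D_0 \to D_1}(f) = -e$ and $s_{D_0 \to D_1}(e) = (x_1-x_2+x_3-x_0)f$, with analogous formulas for $s_{D_1 \to D_0}$.

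Under the identifications $I_{01}$ and $I_{10}$, the two claimed identities reduce to the equalities
\[
d_0 - d_1 = (-x_0+x_1+x_2-x_3)\,s_{D_0 \to D_1}, \qquad d_1 - d_0 = (x_0-x_1-x_2+x_3)\,s_{D_1 \to D_0}
\]
of $R$-linear endomorphisms of $R\{1\}\oplus R$, and I would check each on $f$ and on $e$ separately. Applied to $f$, the left-hand side of the first identity yields $\bigl((x_0-x_2)-(x_1-x_3)\bigr)e$, while the right-hand side yields $(-x_0+x_1+x_2-x_3)\cdot(-e)$; these agree by the arithmetic identity $(x_0-x_2)-(x_1-x_3) = -(-x_0+x_1+x_2-x_3)$. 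Applied to $e$, both sides carry a common factor of $(x_1-x_2+x_3-x_0)f$, and the residual scalar matches by the same identity. The second identity is verified identically, with the roles of $D_0$ and $D_1$ interchanged; the overall sign change between the two scalars on the right-hand sides is exactly what is needed to absorb the swap.

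There is no serious obstacle here beyond careful bookkeeping. The one subtle point is that the shift $\{1\}$ in the target of each saddle map is odd with respect to the odd parameter $k=-3$, which is why the bottom row of the elementary saddle diagram carries extra minus signs; these are internal to $C(D_1)\{1\}$ and $C(D_0)\{1\}$, however, and do not contaminate the present computation because $d_0$ and $d_1$ in the statement refer to the unshifted differentials. Once this distinction is kept straight, the identities are transparent from the explicit formulas above.
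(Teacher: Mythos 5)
The paper states this lemma without proof (it is a direct matrix check), so there is no argument to compare against beyond the implicit one, and your approach — computing on the two generators of the rank-2 Koszul modules and noting that $I_{01},I_{10}$ are the identity on generators — is exactly what is intended. Your verification of the first identity is complete and correct: on $f$ both sides give $(x_0-x_1-x_2+x_3)e$, and on $e$ both sides give $(-x_0+x_1+x_2-x_3)(x_1-x_2+x_3-x_0)f$.

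The one thing you leave too implicit is the formula for $s_{D_1 \to D_0}$, which the paper defines only as ``permuting the variables'' in (\ref{elementarysaddle}); you invoke ``analogous formulas'' and assert that the sign flip on the right-hand side ``absorbs the swap.'' This is not automatic: for the second identity to hold as written one needs $s_{D_1 \to D_0}(f) = -e$ and $s_{D_1 \to D_0}(e) = (x_1-x_2+x_3-x_0)f$, but the literal cyclic relabeling $x_i \mapsto x_{i+1\bmod 4}$ sends the coefficient $x_1-x_2+x_3-x_0$ of the second diagonal arrow to its negative, so a naive application of the permutation gives $s_{D_1 \to D_0}(e) = -(x_1-x_2+x_3-x_0)f$. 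The sign convention for $s_{D_1 \to D_0}$ must be fixed (equivalently, one should either choose the permutation and odd-shift signs consistently, or simply \emph{define} $s_{D_1\to D_0}$ by the explicit matrix that makes it a chain map to the shifted target). Once that formula is pinned down on the page, the second identity verifies in the same two-line way as the first, and the proof is complete.
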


\begin{cor}
The maps $c_{\pm}$ defined in equations~\ref{poscrossingchange} and \ref{negcrossingchange} are chain maps and mutually inverse isomorphisms.
\end{cor}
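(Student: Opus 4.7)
The plan is a direct matrix computation. Using the block decompositions $C(T_-) = C(D_1)\{1,-1\} \oplus C(D_0)\{-1\}$ and $C(T_+) = C(D_0)\{1\} \oplus C(D_1)\{-1,1\}$ coming from the two resolutions at the changed crossing, and writing $\lambda = x_0-x_1-x_2+x_3$, the differentials and the map $c_+$ take the block forms
\begin{equation*}
D_{T_-} = \begin{pmatrix} d_1 & 0 \\ s_{D_1 \to D_0} & d_0 \end{pmatrix},\ D_{T_+} = \begin{pmatrix} d_0 & 0 \\ s_{D_0 \to D_1} & d_1 \end{pmatrix},\ c_+ = \begin{pmatrix} I_{10} & \pm\lambda \\ 0 & I_{01} \end{pmatrix},
\end{equation*}
where $d_0,d_1$ denote the differentials of $C(D_0),C(D_1)$ as in the preceding Lemma, and $c_-$ has an analogous block form with $I_{01}$ and $I_{10}$ interchanged, viewed as landing in $C(T_-)\{0,2\}$.

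Multiplying out $c_+ D_{T_-} = D_{T_+} c_+$ entry by entry gives four identities. The two diagonal entries unpack to exactly the two identities of the Lemma (up to consistent sign choices that can be absorbed into the choice of marked edges). The $(1,2)$ entry reduces to the $R$-linearity of $d_0$, which commutes with multiplication by $\lambda$. The $(2,1)$ entry is the single further relation $I_{01} s_{D_1 \to D_0} = s_{D_0 \to D_1} I_{10}$, which I would verify by a short direct calculation from the explicit diagonal entries of the saddle maps and the identity-on-underlying-module maps $I_{01},I_{10}$ given in the preceding diagrams. The chain-map check for $c_-$ proceeds identically, with the sign flip of $\lambda$ in its matrix precisely matching the opposite sign in the first identity of the Lemma.

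For the inverse relation, direct matrix multiplication gives
\begin{equation*}
c_- c_+ = \begin{pmatrix} I_{01} I_{10} & I_{01}\lambda - \lambda I_{01} \\ 0 & I_{10} I_{01} \end{pmatrix},
\end{equation*}
whose off-diagonal entry vanishes by $R$-linearity of $I_{01}$, and whose diagonal entries both equal the identity of the common underlying module $R\{1\} \oplus R$ since $I_{01}$ and $I_{10}$ are by construction the identity map on each summand. The same calculation shows $c_+ c_- = 1$, so $c_+$ and $c_-$ are mutually inverse chain isomorphisms. The only nonformal step is the $(2,1)$ entry of the chain-map equation, which is a short explicit check; the bookkeeping of Koszul signs arising from the odd internal degree $k=-3$ of the differentials is routine and can be handled consistently by the conventions already fixed in the previous section.
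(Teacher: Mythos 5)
The paper states this corollary without proof, so there is nothing to compare against directly; your block-matrix verification is the natural argument one would supply. You correctly identify that the diagonal entries of the chain-map equation are the two identities in the preceding Lemma, that the $(1,2)$ entry is just $R$-linearity of $d_0$, and that the one genuinely new ingredient is the relation $I_{01}\circ s_{D_1\to D_0}=s_{D_0\to D_1}\circ I_{10}$ --- which is indeed exactly the auxiliary fact the paper itself invokes later, without proof, in establishing Proposition~\ref{nonorientedmap}. The computation of $c_-c_+$ and $c_+c_-$ via $I_{01}I_{10}=I_{10}I_{01}=1$ and $R$-linearity killing the off-diagonal term is also correct.

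One small correction to the bookkeeping: when you write $D_{T_\pm}$ with diagonal blocks $d_0,d_1$, these should be the \emph{shifted} differentials, and since all the shifts at a crossing are by $\pm1$ in the internal grading the convention established in Lemma~\ref{dmodify} (``since $k$ is odd, the differential on $(C,d_0)\{-k\}$ is $-d_0$'') puts a $-$ sign in front of each diagonal block. With those signs the diagonal entries of $c_+D_{T_-}=D_{T_+}c_+$ match the Lemma exactly, with no sign left over; without them you get the opposite sign to the Lemma. So the sign discrepancy you wave away as ``absorbable into the choice of marked edges'' is actually coming from the shift convention, not from marked edges, and it resolves cleanly once that convention is applied. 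Apart from this attribution, the proof is correct and complete.
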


\begin{cor}\label{linkhomology}
For any link $K$ with $c$ components, the total homology of $C(K)$ is isomorphic to $\Z[x_1,\dots,x_c]/(x_i^2-x_j^2)$. 
\end{cor}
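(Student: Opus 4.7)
The plan is to reduce to the case of an unlink by crossing changes and then invoke Proposition~\ref{closedplanar}. Any link can be transformed into an unlink by finitely many crossing changes (standard fact, via the unlinking number). By the preceding corollary each crossing change induces a chain isomorphism of the underlying unfiltered complexes via the mutually inverse maps $c_\pm$, so the total homology of $C(K)$ is isomorphic to the total homology of the factorization associated to the $c$-component unlink $U_c$.

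Before iterating, I would verify that the crossing change isomorphisms commute with the action of $\Z[x_1,\dots,x_c]$ defined in Lemma~\ref{linkmodule}. Using Lemma~\ref{variablehomot}, the basepoints defining each $x_i$ action can be slid anywhere along the corresponding component up to $1$-homotopy, so I may arrange them to lie outside the small disk where the crossing change occurs. The action of $x_i$ is then multiplication by a fixed element $x_{r_i}-x_{\ell_i}$ of the edge ring $R$, and since $c_\pm$ are $R$-linear they commute strictly with this multiplication. Hence the induced isomorphism on total homology is $\Z[x_1,\dots,x_c]$-linear.

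It then remains to compute the total homology of $U_c$. Using Proposition~\ref{tangleinvariant} I reduce the diagram of $U_c$ to a planar disjoint union of $c$ circles; a $1$-homotopy equivalence of multifactorizations induces a chain homotopy equivalence of total complexes, so this step preserves total homology together with its $\Z[x_1,\dots,x_c]$-module structure (again by $R$-linearity, with basepoints held away from the moves). Proposition~\ref{closedplanar} then identifies the total homology of this planar diagram, up to an overall grading shift that does not affect the module, with $\Z[x_1,\dots,x_c]/(x_i^2-x_j^2)$; the variables there are by definition the edge differences $x_{r_i}-x_{\ell_i}$ for one edge per component, which matches the action from Lemma~\ref{linkmodule}.

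The main obstacle is purely bookkeeping: verifying that the module action is transported correctly through the chain of isomorphisms, and that the edge-ring element used to implement $x_i$ on each side of a crossing change is literally the same element of $R$. The substantive content is already packaged in the preceding corollary (crossing change), Proposition~\ref{tangleinvariant} (Reidemeister invariance), and Proposition~\ref{closedplanar} (planar computation), with Lemma~\ref{variablehomot} providing the flexibility to keep basepoints clear of each local modification.
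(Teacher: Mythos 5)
Your proposal is correct and follows the paper's own proof exactly: reduce to the unlink via crossing changes (using the isomorphisms $c_\pm$), then invoke Proposition~\ref{closedplanar}. Your extra verification that the module action is transported correctly through the $c_\pm$ maps and the planar simplifications is sound and is precisely the content of the paragraph the paper places immediately after the proof, where it observes that the identification is canonical up to sign because $c_\pm$ commute with the $x_i$-actions once basepoints are kept clear of the crossing changes.
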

\begin{proof}
An appropriate sequence of crossing changes transforms $K$ into an unlink $U$, and the corresponding composite of maps $c_{\pm}$ induces an isomorphism between the total homology of $C(K)$ and the total homology of the unlink $U$, which was computed in Proposition~\ref{closedplanar}. 
\end{proof}
Note that, a priori, the identification of the total homology of $C(K)$ with that of an unlink could depend on the choice of unknotting sequence. However, the action described in Lemma~\ref{linkmodule} commutes with the maps $c_{\pm}$ since basepoints can be moved away from the crossing changes. Since the only graded automorphisms of $\Z[x_1,\dots,x_c]/(x_i^2-x_j^2)$ commuting with multiplication by each $x_i$ are $\pm 1$, this identification of the total homology of $C(K)$ with $\Z[x_1,\dots,x_c]/(x_i^2-x_j^2)$ is in fact canonical up to sign.

The maps $I$ above can also be used to define isomorphisms between the reduced complexes attached to knots differing by a nonorientable band move, the situation shown in Figure~\ref{nonorientedband}. The key idea is to use homotopies among the variables $x_i$ to make $I$ into a chain map.

\begin{figure}[h]
  \centering
  \def\svgwidth{\columnwidth}
  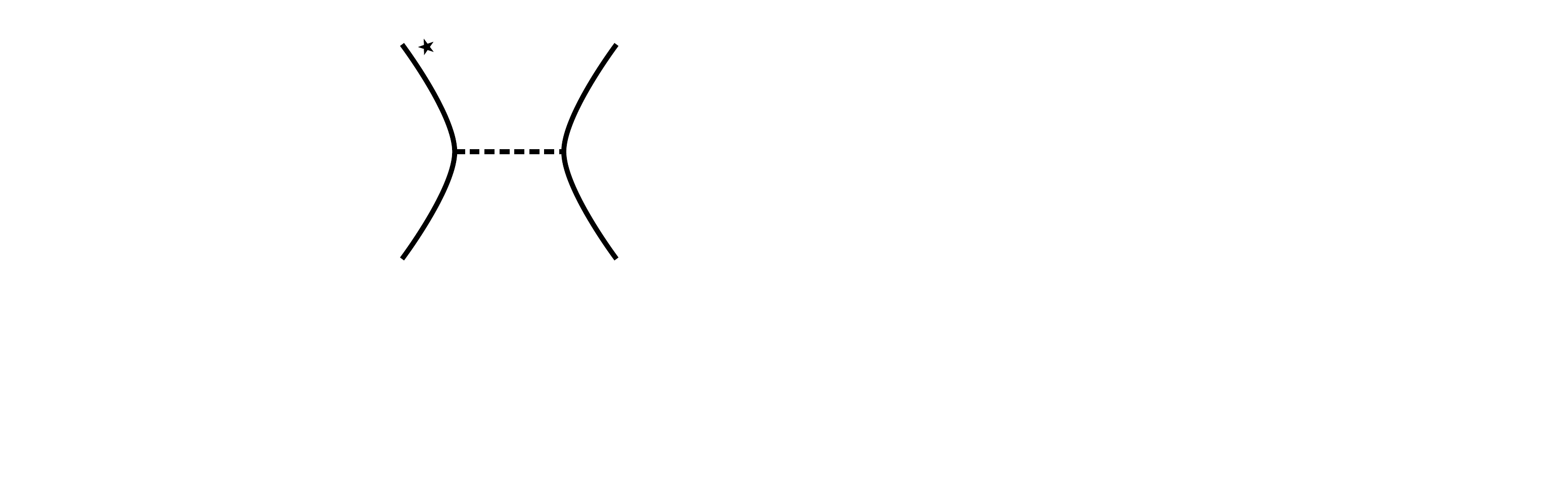
  \caption{Diagrams for two knots related by a nonorientable band move. The diagrams are as shown inside the shaded region and identical outside it, with the shared outer tangle being any tangle that has strands connecting the endpoints linked by dashed lines. The heavy dot represents the basepoint, and $x_a$ and $x_b$ the labels of the regions adjacent to it.}\label{nonorientedband}
\end{figure}

\begin{prop} \label{nonorientedmap}
Suppose that $K_0$ and $K_1$ are two knots differing by a nonorientable band move as in Figure~\ref{nonorientedband}. Then there is a chain map $$\sigma_{K_0 \to K_1}: \bar{C}(K_0) \to \bar{C}(K_1)\{0,e/2+1\},$$ where $e$ is the normal euler number of the band. Both composites ${\sigma_{K_0 \to K_1} \circ \sigma_{K_1 \to K_0}}$ and ${\sigma_{K_1 \to K_0} \circ \sigma_{K_0 \to K_1}}$ are equal to the identity.
\end{prop}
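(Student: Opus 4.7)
The plan is to construct $\sigma_{K_0 \to K_1}$ by modifying the linear map $I_{01} \otimes \mathrm{id}_{C_{\mathrm{outside}}}: C(K_0) \to C(K_1)$. By the lemma preceding Corollary~\ref{linkhomology}, the failure of $I_{01}$ to commute with the vertical differential is exactly multiplication by the polynomial $(-x_0+x_1+x_2-x_3)$ postcomposed with the saddle map $s_{D_0 \to D_1}$, and the task is to absorb this obstruction using Lemma~\ref{variablehomot} together with the passage to the reduced complex.

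First, I would analyze multiplication by $(-x_0+x_1+x_2-x_3)$ on the whole knot complex $C(K_0)$. Using Lemma~\ref{variablehomot}, each of the edge-variables $x_0-x_1$ and $x_2-x_3$ near the band is $1$-homotopic to $\pm x_i$, where $x_i$ is the basepoint variable; such a homotopy is produced by composing the elementary variable-homotopies along the knot strand that exits the band, travels through the outer tangle, and terminates at the basepoint edge. The key combinatorial observation is that because the band is nonorientable, both of these homotopies contribute $x_i$ with the \emph{same} sign, so the obstruction $(-x_0+x_1+x_2-x_3)$ is $1$-homotopic to $\pm 2 x_i$ as an endomorphism of $C(K_0)$. (For an orientable band the two contributions would cancel, which is the structural reason no analogous map exists in the orientable setting.) On the quotient $\bar{C}(K_0) = C(K_0)/x_i$, the obstruction is therefore $1$-nullhomotopic.

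Having trivialized the obstruction, I would define $\sigma_{K_0 \to K_1}$ as $I_{01} \otimes \mathrm{id}$ corrected by the appropriate composition of the saddle map with the nullhomotopy produced above, in the style of Proposition~\ref{perturb} and Lemma~\ref{dmodify}. Iterating degree-by-degree in the filtration grading produces a chain map; the filtration shift of $e/2+1$ follows from a standard calculation tracking the contribution of the saddle map $s_{D_0 \to D_1}$ and the local writhe accumulated in the strand-tracing homotopies, the latter summing to half the normal Euler number of the band. The reverse map $\sigma_{K_1 \to K_0}$ is constructed symmetrically from $I_{10}$; since $I_{01}$ and $I_{10}$ are literal identities on the underlying module, both composites equal the identity at leading filtration order, and choosing the two nullhomotopies compatibly forces the corrections to match so that the composites are equal to the identity exactly. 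The main obstacle will be the careful orientation-tracking needed to verify that the two edge-variable homotopies add rather than cancel in the nonorientable case, together with the bookkeeping that turns this sum into the correct filtration shift $e/2+1$.
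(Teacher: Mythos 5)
Your strategy matches the paper's in its main lines: correct $I_{01}\otimes 1$ using the homotopy $h$ supplied by Lemma~\ref{variablehomot} between $x_0-x_1-x_2+x_3$ and $-2(x_a-x_b)$, the latter vanishing on the reduced complex. That part of the plan is sound. However, your account of the filtration shift $e/2+1$ is wrong in a way that would derail the bookkeeping you say is the main obstacle. You attribute the $e/2$ to ``local writhe accumulated in the strand-tracing homotopies,'' but the homotopy $h$ has a fixed bidegree $\{-1,1\}$ regardless of the diagram, accounting only for the $+1$. In the paper the $e/2$ appears before any map is built: the diagrams of $K_0$ and $K_1$ have the same underlying crossings (all lying in the outer tangle) but carry different relative orientations of the strands, so the numbers $n_-$ of negative crossings differ, and the normalization $C(K)=C(D)\{0,-n_-\}$ already imports a shift of $n_-^1-n_-^0$; Lemma 4.2 of Ozsv\'ath--Stipsicz--Szab\'o identifies this quantity with $e/2$. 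Nothing about the Euler number is read off the homotopies.

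Two smaller points. The iterative, degree-by-degree construction you appeal to (Proposition~\ref{perturb}, Lemma~\ref{dmodify}) would require verifying that the obstruction at each stage is nullhomotopic; you do not do this, and in fact it is unnecessary, because the paper checks directly that the single correction $\sigma = I_{01}\otimes 1 + s_{D_0\to D_1}\otimes h$ is already a chain map --- the residual term cancels because $s$ is a chain map, $h$ nullhomotopes $x_0-x_1-x_2+x_3$ on the reduced outer factor, and this boundary-ring element acts the same on either tensor factor. Likewise, proving the composites are the identity requires the concrete relations $I_{01}I_{10}=I_{10}I_{01}=\mathrm{id}$ and $I_{01}\circ s_{D_1\to D_0}=s_{D_0\to D_1}\circ I_{10}$, not just ``choosing the two nullhomotopies compatibly''; as written that is an aspiration rather than an argument.
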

\begin{proof}
The diagrams for $K_0$ and $K_1$ can be respectively decomposed as unions $D_0 \cup D_{\text{out}}$ and $D_1 \cup D'_{\text{out}}$, where $D'_{\text{out}}$ differs from $D_{\text{out}}$ only in the relative orientation of the strands. Since orientation only comes into the definition of $C(K)$ in fixing the filtration grading, $C(K_0) = C(D_0) \otimes C(D_\text{out})$ and ${C(K_0) = C(D_0) \otimes C(D_\text{out})\{0,n_-^1 - n_-^0\}}$ where $n_-^0$ and $n_-^1$ are the numbers of negative crossings in the given diagrams of $K_0$ and $K_1$. Since these diagrams have the same number of crossings, $n_-^1 - n_-^0$ is equal to $\frac12 \left( w^0 - w^1 \right)$ where $w^i$ is the writhe of the given diagram of $K^i$. By Lemma 4.2 from \cite{ozsvath2016unoriented}, then, this is half of the normal euler number of the band.

Now, by Lemma~\ref{variablehomot}, there is a homotopy $h: D_{\text{out}} \to D_{\text{out}}\{-1.1\}$ between $x_0-x_1-x_2+x_3$ and $-2x_a+2x_b$. On the reduced complex, $x_a-x_b = 0$, so $h$ is a homotopy between $x_0-x1-x_2+x_3$ and $0$. Consider the map ${\sigma: \bar{C}(K_0) \to \bar{C}(K_1)\{0,e/2+1\}}$ defined by 
\begin{equation}
\sigma = I_{01} \otimes 1 + s_{D_0 \to D_1} \otimes h
\end{equation}
This is a chain map since, writing $d_0,d_1,$ and $d_{\text{out}}$ for the differentials of $C(D_0)$, $C(D_1)$ and $C(D_{\text{out}})$, respectively, and $s$ for $s_{D_0 \to D_1}$,
\begin{align*}
\sigma \circ (d_0 \otimes 1 + 1 \otimes d_{\text{out}}) &= I_{01} d_0 \otimes 1 + I_{01} \otimes d_{\text{out}} - s d_0 \otimes h + s \otimes h d_{\text{out}} \\
&= d_1 \circ I_{01} \otimes 1 - (x_0-x_1-x_2+x_3)s \otimes 1 + I_{01} \otimes d_{\text{out}} \\
& \qquad \qquad + d_1 s \otimes h - s \otimes d_{\text{out}} h + s \otimes (x_0-x_1-x_2+x_3) \\
&= (d_1 \otimes 1 + 1 \otimes d_{\text{out}})(I_{01} \otimes 1 + s \otimes h)
\end{align*}
In the above computation, the sign changes in the first and third line come from the fact that $s,h$ and all differentials are odd maps, so commuting them past each other introduces a sign. The fact that $\sigma_{K_0 \to K_1}$ and $\sigma_{K_1 \to K_0}$ are mutually inverse isomorphisms follows from a similar computation, using the facts that $I_{01}$ and $I_{10}$ compose to the identity in either direction and that ${I_{01} \circ s_{K_1 \to K_0} = s_{K_0 \to K_1} \circ I_{10}}$. 
\end{proof}

For a more thorough discussion of the topology of nonorientable cobordisms, see section 4 of \cite{ozsvath2016unoriented}. All we will need is the following normal form theorem for nonorientable cobordisms:
\begin{theorem}[Theorem 4.6 from \cite{ozsvath2016unoriented}, Theorem 1.3 from \cite{kamada1989nonorientable}]\label{nonorientednormalform}
If ${F \subset S^3 \times I}$ is a properly embedded nonorientable surface with $\partial F \cap S^3 \times \{0\} = K_0$ and ${\partial F \cap S^3 \times \{1\} = K_1}$, then there are knots $K_0'$ and $K_1'$ concordant to $K_0$ and $K_1$, respectively, such that $K_1'$ can be produced from $K_0'$ by adding $b_1(F)-1$ nonorientable bands with total euler number equal to $e(F)$. 
\end{theorem}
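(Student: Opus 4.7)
The plan is to analyze the cobordism $F \subset S^3 \times I$ via Morse theory adapted to the height function $\pi \colon S^3 \times I \to I$. After a small generic perturbation, the restriction $f = \pi|_F$ is a Morse function on $F$ whose critical points of index $0$, $1$, and $2$ correspond respectively to births of unknotted circles, band moves (saddles), and deaths of unknotted circles. The goal is to rearrange and then pair these critical points so that the orientable part of the cobordism is absorbed into concordances on either end, leaving only nonorientable bands in the middle.

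First I would use standard Morse rearrangement (Cerf theory) to arrange that all index-$0$ critical points occur first, then all index-$1$, then all index-$2$. Each index-$1$ critical point is then classified as \emph{orientable} (its regular neighborhood in $F$ is an annulus) or \emph{nonorientable} (its regular neighborhood is a M\"obius band). Next I would pair each orientable saddle either with an index-$0$ critical point whose newborn circle it absorbs, or with an index-$2$ critical point whose circle it later kills; this is a handle-slide argument, using that an orientable saddle has two sides along which one can trade adjacency. A matched $(0,1)$ pair introduces an unknotted circle and merges it back in, which is a genuine concordance; the $(1,2)$ case is symmetric. Any leftover unpaired $(0,2)$ pair is a split-off sphere component and can be discarded.

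Third I would do the count. Since $F$ is connected with two boundary components, $\chi(F) = 1 - b_1(F)$, and on the other hand $\chi(F) = c_0 - c_1 + c_2$ with $c_i$ the number of index-$i$ critical points. After the pairings above the orientable saddles and the $c_0, c_2$ critical points cancel in the count, so the number of remaining nonorientable saddles is exactly $b_1(F) - 1$. For the Euler number, the key observation is that $e$ is additive over the elementary pieces in the decomposition (relative to fixed Seifert framings on the boundary), and each concordance piece contributes $0$ since the normal bundle of a concordance is trivializable. Hence the total normal Euler number carried by the remaining nonorientable bands equals $e(F)$.

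The main obstacle is making the handle-pairing step precise while keeping track of the framing data that computes the normal Euler number; in particular, verifying that an orientable saddle, once absorbed into a concordance on one end, really contributes nothing to $e$ requires comparing the surface framing of the band to the Seifert framing of the knots at its endpoints. This bookkeeping is what is carried out in \cite{kamada1989nonorientable} and in Section 4 of \cite{ozsvath2016unoriented}, which is why the theorem is quoted rather than reproven here.
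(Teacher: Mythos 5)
The theorem is quoted in the paper rather than proved, so there is no internal proof to compare against; I am evaluating your sketch on its own terms.

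There is a genuine gap in the pairing step. You assert that every orientable saddle can be matched with a birth (index $0$) or a death (index $2$), so that each such pair is absorbed into a concordance on one end. This is simply not true when the surface has nontrivial orientable genus. A quick Euler characteristic count makes this precise: if $F$ is connected with two boundary circles, $\chi(F) = c_0 - c_1 + c_2 = 1 - b_1(F)$. If every orientable saddle were paired with a birth or a death, and the remaining saddles are the $m$ nonorientable ones, then the paired pieces contribute $0$ to $\chi$ while each nonorientable band contributes $-1$; so $c_1 = c_0 + c_2 + m$ and $\chi = -m$. That is consistent with $m = b_1(F) - 1$ only if \emph{all} the genus-creating orientable saddles have somehow vanished, which the pairing argument alone does not accomplish. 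Indeed, if you apply your argument to an orientable cobordism of positive genus, it would incorrectly conclude that the cobordism factors as concordance followed by concordance, i.e.\ $\chi = 0$.

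What is missing is the step where orientable genus is \emph{traded for cross-caps}, which is where the hypothesis that $F$ is nonorientable is actually used. The underlying surface identity is $T^2 \,\#\, \mathbb{RP}^2 \cong \mathbb{RP}^2 \,\#\, \mathbb{RP}^2 \,\#\, \mathbb{RP}^2$: in the presence of a M\"obius band in $F$, an orientable handle can be slid through it and replaced by a pair of nonorientable bands, preserving $\chi$ and (with care) the normal Euler number. After exhausting the orientable genus this way, the remaining orientable saddles do pair with births and deaths to form concordances, and the count and Euler-number bookkeeping then go through as you describe. This conversion is the heart of Kamada's normal form and of Section~4 of Ozsv\'ath--Stipsicz--Szab\'o; without it your pairing claim, and hence the count $b_1(F)-1$, does not follow. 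You flagged the Euler-number bookkeeping as the main obstacle, but the more fundamental missing ingredient is the tube-to-crosscap exchange itself.
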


\subsubsection{Cobordisms and concordance invariance}

In definition~\ref{saddlemap}, we defined a map $s_{K \to K'}: C(K) \to C(K')\{1\}$ whenever $K$ and $K'$ are related by a cobordism consisting of a single one-handle, and computed the effect of this map on homology for planar cobordisms of unknots. We are now in a position to extend these results to more general knots and cobordisms. 

By Proposition~\ref{loopeliminate}, $C(K \sqcup U) \cong C(K)\{-1\} \oplus C(K)\{1\}$.
\begin{defn}
The zero-handle map $b_K: C(K) \to C(K \sqcup U)\{-1\}$ is the inclusion of the second summand above, and the two-handle map ${d_K: C(K \sqcup U) \to C(K)\{-1\}}$ is the projection onto the first summand.
\end{defn}

By composing the maps $s,b,$ and $d$, we can associate a map
\begin{equation*}
f_{\Sigma}: C(K) \to C(K')\{-\chi(\Sigma)\}
\end{equation*}
to any cobordism $\Sigma$ from $K$ to $K'$ together with a handle decomposition. The map $f_{\Sigma}$ potentially depends on the precise handle decomposition used, but the induced map on total homology turns out to not even depend on how $\Sigma$ is embedded. By Corollary~\ref{linkhomology}, the total homology of $C(K)$ is isomorphic to the module assigned to $n$ circles by the $\mathcal{F}_3$ Frobenius extension if $K$ has $n$ components. 

\begin{prop}\label{anycobord}
The map induced by $f_{\Sigma}$ on total homology is equal up to sign to the cobordism map from the $\mathcal{F}_3$ Frobenius extension.
\end{prop}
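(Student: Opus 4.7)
The plan is to reduce to elementary handles. Any cobordism with a chosen handle decomposition factors as a composition of $0$-handle, $1$-handle, and $2$-handle attachments, and the $\mathcal{F}_3$ cobordism map factors the same way, so by functoriality it suffices to verify the statement for each of the three elementary handle types separately.

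For the $0$-handle and $2$-handle cases, the maps $b_K$ and $d_K$ are defined to be the inclusion and projection attached to the direct-sum decomposition $C(K \sqcup U) \cong C(K)\{-1\} \oplus C(K)\{1\}$ of Proposition~\ref{loopeliminate}. Tracing through the proof of Proposition~\ref{closedplanar} shows that, under the identification of total homology with the $\mathcal{F}_3$ module, these two summands correspond to the subspaces generated by the elements $1$ and $x$ in $\Z[x]/(x^2-h)$ (after the appropriate grading shift). The inclusion of the $\{1\}$ summand is then the $\mathcal{F}_3$ unit map and the projection from it is the $\mathcal{F}_3$ counit, up to sign.

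For the $1$-handle case, pick any sequence of crossing changes, taking place entirely outside the saddle region, that converts the diagram of $K$ into a planar unlink diagram $U$; the same crossings of $K'$ undergo the same changes, producing a planar diagram $U'$ related to $U$ by a single planar saddle. By Definition~\ref{saddlemap} the map $s_{K\to K'}$ is the tensor product of the elementary saddle with the identity on the outer complex, while the crossing-change chain isomorphisms $c_\pm$ from equations~\eqref{poscrossingchange} and \eqref{negcrossingchange} act only on the factor coming from the changed crossing. These two factors are disjoint, so the composite $c \circ s_{K\to K'}$ equals $s_{U\to U'} \circ c$ on the nose, giving a square on total homology identifying the unknown map $s_{K\to K'}$ with the planar saddle $s_{U\to U'}$. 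The latter is the $\mathcal{F}_3$ saddle by Lemma~\ref{saddleplanar}.

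The one subtle point is that the identification of the total homology of $C(K)$ with the $\mathcal{F}_3$ module is canonical only up to an overall sign, so one has to check that the identifications chosen for $K$, $K'$, $U$, and $U'$ can be matched compatibly. This is handled by the remark after Corollary~\ref{linkhomology}: the crossing-change maps $c_\pm$ commute with the $\Z[x_1,\dots,x_c]$-action from Lemma~\ref{linkmodule} (since basepoints may be placed away from the changed crossing), and the only graded automorphisms of $\Z[x_1,\dots,x_c]/(x_i^2 - x_j^2)$ commuting with multiplication by each $x_i$ are $\pm 1$. This is the main obstacle; once it is settled, the three elementary cases combine to give the result up to sign.
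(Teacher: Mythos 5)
Your overall strategy — reduce to elementary handles, argue the $0$- and $2$-handle cases directly from the definitions of $b_K$ and $d_K$, and handle the $1$-handle case by conjugating the saddle to a planar saddle via maps supported in the complementary tensor factor — is the same as the paper's, and your extra remark about the sign ambiguity being controlled by the module structure from Lemma~\ref{linkmodule} is a correct and worthwhile point.

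However, there is a small but real gap in the $1$-handle case as you have written it. You claim that a sequence of \emph{crossing changes} alone, performed outside the saddle region, converts the diagram of $K$ into a \emph{planar} unlink diagram $U$. Crossing changes can turn $K$ into an unlink, but they preserve the number of crossings; they cannot produce a crossingless diagram, so Lemma~\ref{saddleplanar} does not yet apply. The paper instead performs a sequence of crossing changes \emph{and Reidemeister moves} in the fixed region. The Reidemeister-move $1$-homotopy equivalences also act only on the tensor factor coming from the fixed region and therefore also commute with $s_{K\to K'}$, for exactly the same reason as the crossing-change isomorphisms $c_\pm$; once you include them, the argument goes through. So the fix is straightforward, but as stated your proof stops short of a planar diagram and the invocation of Lemma~\ref{saddleplanar} is not yet justified.
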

\begin{proof}
Since $f_{\Sigma}$ and the $\mathcal{F}_3$ cobordisms maps are defined as a composite of maps assigned to cobordisms built from a single handle, it suffices to show this for such cobordisms, and for a zero-handle or two-handle this statement follows directly from the definitions of $b_K$ and $d_K$. Therefore, suppose that $K$ and $K'$ differ by a single oriented band, and let $D$ and $D'$ be diagrams for $K$ and $K'$ differing only in a disc in which $D$ looks like the basic tangle diagram $D_0$ from Figure~\ref{basics} and $D'$ looks like $D_1$, so that the saddle map $s_{K \to K'}$ acts by the identity on the tensor factor corresponding to the fixed part of the diagram and by the elementary saddle map $s_{D_0 \to D_1}$ in the changing region. 

By a sequence of crossing changes and Reidemeister moves in the fixed region, all of the crossings can be eliminated. This transforms $D$ and $D'$ into two planar diagrams differing by a planar saddle move, and the resulting sequence of isomorphisms from Reidemeister moves and maps $c_{\pm}$ identifies the total homologies of $C(K)$ and $C(K')$ with the homologies of appropriate unlinks. Since the moves all occur in the fixed region, these isomorphisms act on a different tensor factor than the saddle map $s_{K \to K'}$, so the isomorphisms and saddle map commute. Therefore, the saddle map $s_{K \to K'}$ has the same action on total homology as the planar saddle, which was computed in Lemma~\ref{saddleplanar} to be the saddle map of the $\mathcal{F}_3$ Frobenius system.
\end{proof}

Two special cases of this computation will be crucial later:
\begin{cor}
If both $K$ and $K'$ are knots and $\Sigma$ has genus $g$, the map induced by $f_{\Sigma}$ on total homology is multiplication by $(2x)^g$. In particular, when $\Sigma$ is a concordance, $f_{\Sigma}$ induces an isomorphism on homology.
\end{cor}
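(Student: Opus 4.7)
The plan is to reduce the statement to a computation in the Frobenius extension $\mathcal{F}_3$ and then compute the standard handle map. By Proposition~\ref{anycobord}, the map induced by $f_\Sigma$ on total homology agrees (up to sign) with the $\mathcal{F}_3$ cobordism map assigned to $\Sigma$, and by Corollary~\ref{linkhomology}, the total homology of $C(K)$ for a knot $K$ is the module $\Z[x]$ that $\mathcal{F}_3$ assigns to a single circle. Thus it suffices to identify the $\mathcal{F}_3$ cobordism map of a genus-$g$ oriented surface between two knots as multiplication by $(2x)^g$.

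To do so, I would decompose $\Sigma$ as the product cylinder $K \times I$ together with $g$ attached one-handles, and then further split each one-handle into a pair of bands realizing a split followed by a merge. Each such pair contributes the composition $m \circ \Delta$, where $m$ and $\Delta$ are the $\mathcal{F}_3$ merge and split maps recorded in the proof of Lemma~\ref{saddleplanar}. That proof gives $\Delta(x^n) = x_1^n(x_1 + x_2)$ in $\Z[x_1,x_2]/(x_1^2 - x_2^2)$, while $m$ identifies $x_1 \sim x_2 \sim x$, so $m \circ \Delta(x^n) = 2x^{n+1}$. In other words, attaching one handle acts by multiplication by $2x$, and attaching $g$ of them acts as multiplication by $(2x)^g$, which proves the first claim.

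For the second statement, a concordance is a genus-zero cobordism, so the induced map on total homology is multiplication by $\pm 1$ and is therefore an isomorphism.

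The real work in this corollary already lies in Proposition~\ref{anycobord}; once that is available, the remaining argument is a routine computation in the two-dimensional TQFT of $\mathcal{F}_3$. The only point that requires some care is tracking the normalization that produces the factor of $2$ in $(2x)^g$, but this falls directly out of the formula $\Delta(1) = x_1 + x_2$ from Lemma~\ref{saddleplanar}.
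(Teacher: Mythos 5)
The paper states this corollary without an explicit proof, leaving it as an immediate consequence of Proposition~\ref{anycobord} and the computation of the $\mathcal{F}_3$ TQFT; your argument supplies exactly the missing details and is correct. One small terminological point: a genus-$g$ cobordism between two knots has $2g$ one-handles (Morse critical points of index $1$), not $g$ as you first write, but your immediate clarification that you are pairing each "handle" as a split followed by a merge recovers the right count, and the composite $m\circ\Delta$ being multiplication by $2x$ in $\mathcal{F}_3$ is correct, giving $(2x)^g$ overall (implicitly up to sign, matching the "up to sign" in Proposition~\ref{anycobord}).
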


\begin{cor}\label{splitinj}
If $g(\Sigma) = 0$, then if $K$ has one component $f_{\Sigma}$ is injective on total homology and if $K'$ has one component $f_{\Sigma}$ is surjective on total homology.
\end{cor}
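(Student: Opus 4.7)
The plan is to apply Proposition~\ref{anycobord} to identify the action of $f_\Sigma$ on total homology (up to sign) with the cobordism map of the $\mathcal{F}_3$ Frobenius extension, and then to verify injectivity or surjectivity by choosing a handle decomposition of $\Sigma$ adapted to the side with a single circle.

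For injectivity when $K$ has one component, assume first that $\Sigma$ is connected (otherwise the argument is run component by component and the resulting maps tensored together). As a planar surface whose boundary consists of $K$ together with the $|K'|$ components of $K'$, an Euler-characteristic count shows that $\Sigma$ admits a handle decomposition with exactly $|K'|-1$ one-handles and no zero- or two-handles, and each of these one-handles must split a circle rather than merge two, because at every time slice all circles descend from the single incoming circle $K$. Each split induces on total homology the map $x^n \mapsto x_1^n(x_1 + x_2) = x_1^{n+1} + x_1^n x_2$ computed in Lemma~\ref{saddleplanar}; in the splitting $\Z[x_1, x_2]/(x_1^2-x_2^2) \cong \Z[x_1] \oplus x_2\,\Z[x_1]$ the images of distinct monomials $x^n$ have distinct nonzero $\Z[x_1]$-components $x_1^{n+1}$, so this map is injective. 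Tensoring with identity maps preserves injectivity, so the composite $f_\Sigma$ is injective.

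The surjectivity claim when $K'$ has one component follows by running the same argument on the time-reversed cobordism $\overline{\Sigma}$: read in the original direction, $\Sigma$ then decomposes into $|K|-1$ merge saddles with no other handles, and the merge map $x_1^a x_2^b \mapsto x^{a+b}$ from Lemma~\ref{saddleplanar} is surjective since $x^n$ is the image of $x_1^n \cdot 1$. A composite of surjections is surjective.

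The main step requiring justification is the handle-decomposition claim itself — that a connected planar cobordism from one circle to $k \geq 1$ circles admits exactly $k-1$ splits with no extraneous handles — which is standard Morse theory on planar surfaces. The principal subtlety is the disconnected case: a component of $\Sigma$ not touching $K$ (respectively $K'$) must be handled by a birth (respectively death) followed by further splits (respectively preceded by merges), and one checks that these additional pieces contribute injective (respectively surjective) tensor factors, so the overall composite retains the desired property.
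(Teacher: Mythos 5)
Your proof is correct and takes essentially the same route as the paper: both reduce via Proposition~\ref{anycobord} to the $\mathcal{F}_3$ cobordism map, observe that a genus-zero cobordism to or from a single circle factors into only split (resp.\ only merge) one-handles up to concordance, and conclude from the injectivity of the $\mathcal{F}_3$ split map and surjectivity of the $\mathcal{F}_3$ merge map. Your write-up simply makes the Euler-characteristic bookkeeping and the explicit form of the split/merge maps more detailed than the paper's one-line argument.
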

\begin{proof}
In this case $\Sigma$ is a composite of a concordance and either only splitting one-handles or only merging one-handles. Since the $\mathcal{F}_3$ split map is injective and the merge map is surjective, the result follows.
\end{proof}

\section{Properties of the invariant}

\begin{theorem}
$C(K)$ and $\bar{C}(K)$, seen up to $1$-homotopy equivalence, are invariants of $K$.If $K$ has $c$ components, the total homology of $C(K)$ is isomorphic to the module assigned to $c$ circles by the $\mathcal{F}_3$ Frobenius extension and the total homology of $\bar{C}(K)$ is the quotient of the total homology of $C(K)$ by the variable associated to the marked component. In particular, if $K$ is a knot, the homologies of $C(K)$ and $\bar{C}(K)$ are $\Z[x]$ and $\Z$, respectively.

The $E_2$ pages of the spectral sequences coming from the filtrations on $C(K)$ and $\bar{C}(K)$ are isomorphic respectively to the $\mathcal{F}_3$ Khovanov homology and the reduced Khovanov homology of $K$. 
\end{theorem}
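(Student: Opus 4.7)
The plan is to deduce the first two sentences almost entirely from results already established — Proposition~\ref{tangleinvariant}, Lemma~\ref{linkmodule}, and Corollary~\ref{linkhomology} — and to handle the last sentence by running the filtration spectral sequence of $C(K)$ (which has $D^2 = w = 0$ since $K$ has empty boundary) and matching its first two pages, term by term on the cube of resolutions, with the $\mathcal{F}_3$ Khovanov complex.

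For invariance, $1$-homotopy invariance of $C(K)$ is immediate from Proposition~\ref{tangleinvariant} applied to $K$ as a tangle with empty boundary, which also gives that the boundary ring is $\Z$ and hence that $C(K)$ is a filtered complex of abelian groups. For $\bar{C}(K)$, Lemma~\ref{linkmodule} shows that the action of $x_i$ is independent of the choice of basepoint and commutes up to $1$-homotopy with Reidemeister isomorphisms; since cones of $1$-homotopic chain maps are $1$-homotopy equivalent, so is $\bar{C}(K)$ (and the cone agrees with the quotient once I check $x_i$ is injective on total homology). The total homology of $C(K)$ is exactly Corollary~\ref{linkhomology}. For the reduced statement I will use the short exact sequence
\begin{equation*}
0 \to C(K)\{2,0\} \xrightarrow{x_i} C(K) \to \bar{C}(K) \to 0
\end{equation*}
and its long exact sequence in total homology. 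Writing $\Z[x_1,\dots,x_c]/(x_i^2-x_j^2)$ as a free module over $\Z[h] := \Z[x_i^2]$ with basis the square-free monomials in the remaining variables, multiplication by $x_i$ sends each basis element to another (possibly multiplied by $h$), so it is injective. The long exact sequence then collapses, and the total homology of $\bar{C}(K)$ is the cokernel $\Z[x_1,\dots,x_c]/(x_i,x_j^2)$, which reduces to $\Z$ when $K$ is a knot.

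For the final sentence, the $E_0$ page of the filtration spectral sequence is the associated graded complex equipped with the differential $d_0$, and by construction this is the direct sum, over vertices of the cube of resolutions, of the vertical Koszul factorizations attached to the corresponding planar diagrams. Proposition~\ref{closedplanar} computes the $d_0$-homology at each such vertex as the module that the Frobenius extension $\mathcal{F}_3$ assigns to the corresponding collection of circles, so $E_1$ coincides with the underlying chain groups of the $\mathcal{F}_3$ Khovanov complex of $K$. The induced differential $d_1$ on $E_1$ is built, on each cube edge, from the elementary saddle maps $s_{D_0 \to D_1}$ and $s_{D_1 \to D_0}$ tensored with the identity on the rest of the tangle, and Lemma~\ref{saddleplanar} identifies these with the $\mathcal{F}_3$ merge and split maps. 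Hence $E_2$ is the $\mathcal{F}_3$ Khovanov homology of $K$, and repeating the argument after quotienting by $x_i$ throughout yields the reduced version.

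The step I expect to require the most care is not conceptual but bookkeeping: verifying that the $\{0,-n_-\}$ bigrading shift used in passing from $C(D)$ to $C(K)$, together with the sign conventions coming from the choice of marked edges discussed before Proposition~\ref{slideswitch}, reproduces the standard bigrading and signs of the Khovanov differential on $E_2$. No new ideas beyond those in Lemma~\ref{saddleplanar} and Proposition~\ref{homspace} should be needed for this.
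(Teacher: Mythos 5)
Your proposal is correct and follows essentially the same approach as the paper: Proposition~\ref{tangleinvariant} for invariance, Corollary~\ref{linkhomology} for the total homology of $C(K)$, and Proposition~\ref{closedplanar} together with Lemma~\ref{saddleplanar} to identify $E_1$ with the $\mathcal{F}_3$ Khovanov chain groups and $d_1$ with the Khovanov differential. You actually fill in two things the paper's proof leaves implicit — the $1$-homotopy invariance of $\bar{C}(K)$ via Lemma~\ref{linkmodule}, and the computation of its total homology via the long exact sequence — which is worthwhile. One small imprecision: for the cone of $x_i$ to be $1$-homotopy equivalent to the quotient $C(K)/x_i$, what you need is that $x_i$ acts injectively on the chain level (which holds since $C(K)$ is free over $R(D)$ and $x_i$ is a polynomial generator there), not injectivity on total homology; the latter is what you need separately to collapse the long exact sequence and conclude the total homology of $\bar{C}(K)$ is the cokernel $\Z[x_1,\dots,x_c]/(x_i,x_j^2)$. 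Both hold, so the argument goes through, but keep the two roles of injectivity distinct.
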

\begin{proof}
The first part of this statement was proven in Corollary~\ref{linkhomology}.

The computation of the $E_2$ page similarly follows from Proposition~\ref{closedplanar} and Lemma~\ref{saddleplanar}. By Proposition~\ref{closedplanar}, the $E_1$ page of the spectral sequence is exactly the module underlying the appropriate Khovanov complex, and the differential on the $E_1$ page is the maps induced on homology by planar saddles, which Lemma~\ref{saddleplanar} shows is the Khovanov differential.
\end{proof}

We can now define numerical invariants of $K$ from this complex. The definition of $t$ involves a choice of field $k$, but since none of the results of this paper depend on the choice it will be suppressed from the notation. The invariant could just as well be defined over $\Z$, but would not necessarily be a concordance homomorphism.
\begin{defn}
$t(K)$ is the largest $n$ for which there is a cycle in ${\bar{C}(K)\otimes_\Z k}$ generating the total homology that can be written as a sum of homogeneous elements with filtration grading at least $n$.
\end{defn}
\begin{defn}
For $i \ge 0$, $T_i(K)$ is the largest $n$ for which there is a cycle in $C(K)$ representing the element $(2x)^i$ in the total homology that can be written as a sum of homogeneous elements with filtration grading at least $n$.
\end{defn}

The various maps defined between the complexes $C$ and $\bar{C}$ now give bounds on the invariants $T_i$ and $t$:

\begin{prop}\label{orientedcobordismbound}
If there is an orientable cobordism $\Sigma$ of genus $g$ from $K_1$ to $K_2$, then for any $i$ we have $T_i(K_1) \le T_{i+g}(K_2)$. If $g = 0$, so $K_1$ and $K_2$ are concordant, then $T_i(K_1) = T_i(K_2)$ for each $i$ and $t(K_1) = t(K_2)$.
\end{prop}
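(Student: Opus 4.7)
The plan is to push forward the witness cycle along the cobordism map from Section 4.2. Given an oriented cobordism $\Sigma$ from $K_1$ to $K_2$ of genus $g$, pick a handle decomposition and assemble the saddle, zero-handle, and two-handle maps (interspersed with the Reidemeister isomorphisms from Proposition~\ref{tangleinvariant}) into a map
\begin{equation*}
f_{\Sigma}: C(K_1) \to C(K_2)\{-\chi(\Sigma)\} = C(K_2)\{2g\}.
\end{equation*}
Each building block is a filtered chain map — the shifts recorded in $\{\cdot\}$ live in the internal grading, not the filtration grading, and the Reidemeister $1$-homotopy equivalences are still filtered on the nose — so $f_{\Sigma}$ preserves the filtration degree. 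By Proposition~\ref{anycobord}, the induced map on total homology is, up to sign, multiplication by $(2x)^g$ under the identification of the total homology of $C(K_i)$ with $\Z[x]$.

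Now take a cycle $\alpha \in C(K_1)$ representing $(2x)^i$ in total homology and expressible as a sum of homogeneous terms of filtration grading $\ge T_i(K_1)$. Then $f_{\Sigma}(\alpha)$ is a cycle in $C(K_2)$ representing $(2x)^{i+g}$, and the filtered property of $f_{\Sigma}$ keeps it in filtration $\ge T_i(K_1)$; this gives $T_{i+g}(K_2) \ge T_i(K_1)$. When $g = 0$, the reverse cobordism $\bar{\Sigma}$ provides a map in the opposite direction satisfying the same hypotheses, and the two inequalities together yield $T_i(K_1) = T_i(K_2)$.

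For the statement about $t$, I would argue that $f_{\Sigma}$ descends to a filtered chain map $\bar{f}_{\Sigma}: \bar{C}(K_1) \to \bar{C}(K_2)$. Choose basepoints on $K_1$ and $K_2$ joined by an arc in $\Sigma$ that avoids the cores of all attached handles — this is possible since $\Sigma$ is connected — and place the basepoints in regions of the respective diagrams that lie inside the common outer tensor factor at every stage of the handle decomposition. Then each handle map is a tensor product of an elementary piece with the identity on the factor containing the basepoint variable, so it strictly commutes with multiplication by that variable and descends to the quotient by it. For $g = 0$, Proposition~\ref{anycobord} specializes to multiplication by $1$ on the one-dimensional total homology of $\bar{C}(K_i) \otimes k$, which is therefore an isomorphism; pushing forward a witness cycle for $t(K_1)$ gives $t(K_2) \ge t(K_1)$, and the reverse cobordism completes the equality.

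The main obstacle is the descent of $f_{\Sigma}$ to the reduced complex: a priori, the $\Z[x_1,\dots,x_c]$-action on $C$ only commutes with cobordism-induced maps up to $1$-homotopy (Lemma~\ref{linkmodule}), and an uncontrolled $1$-homotopy could corrupt the filtration bound. Keeping the basepoint in the fixed outer tensor factor throughout the handle decomposition bypasses this by making the commutation strict. Everything else is a straightforward application of the filtration-preserving property of $f_{\Sigma}$ together with Proposition~\ref{anycobord}.
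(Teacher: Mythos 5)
Your argument is correct and follows the same route as the paper: construct $f_{\Sigma}$ from the handle decomposition, observe it is filtered with trivial filtration shift and acts by $(2x)^g$ on total homology (Proposition~\ref{anycobord}), push forward a witness cycle to get $T_i(K_1) \le T_{i+g}(K_2)$, and use the reverse cobordism when $g=0$. The one thing you do more carefully than the paper, which simply asserts that $f_{\Sigma}$ gives filtered isomorphisms of the reduced complexes, is to explain why $f_{\Sigma}$ descends to $\bar{C}$ in the first place; your basepoint-tracking argument is the right idea (though ``$\Sigma$ connected'' is a little quick as a justification that the basepoint arc can avoid all cores and elementary moves --- the freedom to reorder the Morse function and to isotope the basepoint before each local move is what actually makes it work).
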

\begin{proof}
The map $f_{\Sigma}: C(K_1) \to C(K_2)$ is filtered and acts on the total homology by multiplication by $(2x)^g$, so sends a cycle $\gamma$ representing $(2x)^i$ in the total homology of $C(K_1)$ supported in filtration degrees $\ge T_i(K_1)$ to a cycle $f_{\Sigma}(\gamma)$ representing $(2x)^{i+g}$ supported in filtration degrees $\ge T_{i}(K_1)$. 

If $g = 0$, then combining this bound with the same bound coming from the concordance $\bar{\Sigma}: K_2 \to K_1$ gives equality of all $T_i$, and similarly $f_{\Sigma}$ and $f_{\bar{\Sigma}}$ are filtered isomorphisms $\bar{C}(K_1)\otimes k \cong \bar{C}(K_2)\otimes k$ so $t(K_1) = t(K_2)$. 
\end{proof}

\begin{prop}
If there is a possibly nonorientable cobordism $F$ from $K_1$ to $K_2$ with first Betti number $b+1$ and normal Euler number $e$, then $$|t(K_1) - t(K_2) - e/2| \le b.$$
\end{prop}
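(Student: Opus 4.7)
The plan is to apply the normal form theorem to reduce the cobordism to a sequence of single nonorientable band moves, use Proposition~\ref{nonorientedmap} to bound how much $t$ can change across each band, and then combine the per-band estimates by the triangle inequality.

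First, invoke Theorem~\ref{nonorientednormalform} to replace $K_1$ and $K_2$ with concordant knots $K_1'$ and $K_2'$ such that $K_2'$ is obtained from $K_1'$ by a sequence of $b = b_1(F) - 1$ nonorientable band attachments $K_1' = L_0, L_1, \ldots, L_b = K_2'$, whose individual normal Euler numbers $e_i$ satisfy $\sum_i e_i = e$. Since $t$ is a concordance invariant by Proposition~\ref{orientedcobordismbound}, we have $t(K_1) = t(K_1')$ and $t(K_2) = t(K_2')$, so it suffices to prove the bound in this reduced setting.

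For each band $L_{i-1} \to L_i$ of Euler number $e_i$, apply Proposition~\ref{nonorientedmap} in both directions. The forward isomorphism $\sigma_i: \bar{C}(L_{i-1}) \to \bar{C}(L_i)\{0, e_i/2 + 1\}$ is filtered, so it carries any cycle generating the total homology of $\bar{C}(L_{i-1}) \otimes k$ in filtration degrees $\ge t(L_{i-1})$ to a generator of the total homology of $\bar{C}(L_i) \otimes k$ in filtration degrees $\ge t(L_{i-1}) - (e_i/2 + 1)$, yielding $t(L_{i-1}) - t(L_i) \le e_i/2 + 1$. Applying the same proposition to the band viewed in the opposite direction (whose Euler number is $-e_i$, since the writhe difference $n_-^1 - n_-^0 = e_i/2$ identified in the proof of Proposition~\ref{nonorientedmap} flips sign when the roles of the two knots are swapped) gives a filtered isomorphism with shift $-e_i/2 + 1$, hence $t(L_i) - t(L_{i-1}) \le -e_i/2 + 1$. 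Combining these yields the per-band estimate $|t(L_i) - t(L_{i-1}) + e_i/2| \le 1$.

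Summing by the triangle inequality,
\[
\left| t(K_2) - t(K_1) + \tfrac{e}{2} \right| \le \sum_{i=1}^{b} \left| t(L_i) - t(L_{i-1}) + \tfrac{e_i}{2} \right| \le b,
\]
which is equivalent to the desired inequality $|t(K_1) - t(K_2) - e/2| \le b$. The main technical point is verifying that the reverse band enters with Euler number $-e_i$, and this comes directly from the writhe-based identification used in Proposition~\ref{nonorientedmap}: the intrinsic $+1$ contribution of the $I$ map is symmetric in the two directions, while the writhe-difference contribution $e_i/2$ is antisymmetric. Once this sign flip is pinned down, the proof is a clean telescoping estimate.
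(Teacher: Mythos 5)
Your proof is correct and follows the same route as the paper: reduce via the normal form theorem (Theorem~\ref{nonorientednormalform}) and concordance invariance to a sequence of nonorientable band moves, apply Proposition~\ref{nonorientedmap} in both directions to get a per-band bound of $|t(L_i) - t(L_{i-1}) + e_i/2| \le 1$, then telescope. The paper phrases the reduction as "it suffices to prove the bound for a single band" rather than writing out the triangle inequality explicitly, and it handles the sign of the reversed band with the same writhe-difference observation you make; your version is slightly more explicit about the antisymmetry but is otherwise identical in substance.
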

\begin{proof}
By Theorem~\ref{nonorientednormalform}, $F$ factors as a concordance, then $b$ nonorientable bands of total Euler number $e$, and finally a second concordance. Since $t$ is a concordance invariant, it suffices to prove the bound when $K_1$ and $K_2$ are related by a sequence of $b$ nonorientable bands, and by applying the bands one at a time it suffices to prove the bound when $b = 1$ and $K_1$ and $K_2$ are related by a single band. 

By Proposition~\ref{nonorientedmap}, there is a chain map $\sigma_{K_1 \to K_2}: \bar{C}(K_1) \to \bar{C}(K_2)$ that induces an isomorphism on total homology and decreases the filtration by at most $e/2 + 1$, and looking at the band in reverse there is similarly a chain map $\sigma_{K_2 \to K_1}: \bar{C}(K_2) \to \bar{C}(K_1)$ that induces an isomorphism on total homology and decreases the filtration by at most $-e/2 + 1$.

Therefore, tensoring both of these maps by $k$, a generator of the homology of $\bar{C}(K_1) \otimes k$ supported in filtration degrees at least $t(K_1)$ is sent to a generator of the homology of $\bar{C}(K_2) \otimes k$ supported in filtration degrees at least $t(K_1)-e/2-1$, so $t(K_2) \ge t(K_1) - e/2 - 1$. Similarly, $t(K_1) \ge t(K_2)+e/2-1$, and these two bounds give the desired result. 
\end{proof}

Taking $K_1$ to be the unknot in the above two propositions proves Theorem~\ref{nonorientablegenusbound} and most of Theorem~\ref{orientedgenusbound}. Finishing the proof of Theorem~\ref{orientedgenusbound} requires a little more use of the module structure:
\begin{proof}[Proof of Theorem~\ref{orientedgenusbound}]
Multiplication by $2x$ is a filtered endomorphism of $C(K)$, so $T_{i+1}(K) \ge T_i(K)$. Taking $K_1$ to be an unknot in Proposition~\ref{orientedcobordismbound} gives that $T_{i}(K) \ge 0$ when $i \ge g_4(K)$, and taking $K_2$ to be an an unknot gives that $T_i(K) \le 0$ for any $i$. 
\end{proof}

\begin{prop}
If knots $K_+$ and $K_-$ differ by a crossing change in which $K_+$ has a positive crossing and $K_-$ has a negative crossing, then ${t(K_-) \le t(K_+) \le t(K_-)+2}$, and similarly ${T_i(K_-) \le T_i(K_+) \le T_i(K_-) + 2}$.
\end{prop}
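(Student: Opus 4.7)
The plan is to apply the crossing-change chain maps $c_+$ and $c_-$ from equations~\ref{poscrossingchange} and \ref{negcrossingchange} directly to cycles realizing $t$ and $T_i$. Recall that $c_+: C(K_-) \to C(K_+)$ is a filtered chain map (it preserves the filtration grading) and $c_-: C(K_+) \to C(K_-)\{0,2\}$ decreases the filtration grading by at most $2$, and that the preceding Corollary states they are mutually inverse isomorphisms. So the total homology picture is controlled; only the filtration behavior matters.

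First I would check that $c_\pm$ descend to chain maps $\bar{c}_\pm$ between the reduced complexes, with the same filtration-shifting properties. The $x$-action on $C(K)$ is multiplication by $x_e$ for an edge $e$ carrying the basepoint, and by choosing the basepoint on a strand disjoint from the changed crossing, the multiplication acts purely on the ``common factor'' part of the tensor decomposition, while $c_\pm$ acts only on the crossing factor. Hence $c_\pm$ strictly commutes with multiplication by $x$, so it descends to the quotient by $x$, and mutually inverse descents give isomorphisms on the total homology of $\bar{C}$.

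Next comes the main step. For $T_i$: pick a cycle $\gamma \in C(K_-)$ representing $(2x)^i$ whose terms all have filtration $\ge T_i(K_-)$. Then $c_+(\gamma)$ is a cycle in $C(K_+)$ with terms in filtration $\ge T_i(K_-)$; since $c_+$ induces an iso on total homology commuting with $x$, $c_+(\gamma)$ still represents $(2x)^i$, giving $T_i(K_+) \ge T_i(K_-)$. Dually, given a cycle $\gamma'$ representing $(2x)^i$ in $C(K_+)$ supported in filtration $\ge T_i(K_+)$, the map $c_-$ produces a representative cycle for $(2x)^i$ in $C(K_-)$ supported in filtration $\ge T_i(K_+) - 2$, so $T_i(K_-) \ge T_i(K_+) - 2$. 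The same argument applied to $\bar{c}_\pm$ after tensoring with $k$ gives $t(K_-) \le t(K_+) \le t(K_-) + 2$, using cycles representing a generator of the reduced total homology instead of $(2x)^i$.

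There is no serious obstacle: the filtration bookkeeping for $c_+$ and $c_-$ is built into their definitions, and the main substantive input (mutual inverseness on total homology and compatibility with the $x$-action) was already established in the crossing-change discussion. The only point that deserves a sentence of justification is the descent to $\bar{C}$ and the resulting isomorphism on its total homology, which follows once the basepoint is placed away from the crossing.
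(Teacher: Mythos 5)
Your proof is correct and takes essentially the same approach as the paper's: apply $c_+$ and $c_-$ to filtered representatives, using that $c_+$ preserves the filtration, that $c_-$ drops it by at most $2$, and that both are isomorphisms on total (reduced and unreduced) homology. The paper's proof is just a two-sentence version of what you wrote; the extra sentence you spend justifying the descent to $\bar{C}$ by keeping the basepoint away from the changed crossing is a reasonable addition and matches the remark the paper makes after Corollary~\ref{linkhomology}.
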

\begin{proof}
The maps $c_+$ and $c_-$ both induce isomorphisms on the total homologies of both $C(K_{\pm})$ and $\bar{C}(K_{\pm})\otimes k$. Since $c_+$ is a filtered map and $c_-$ decreases the filtration degree by at most two, the result follows. 
\end{proof}

\section{Computations}

Recall that, under the isomorphism between the $E_2$ page of the spectral sequence for $\bar{C}(K)$ and the reduced Khovanov homology of $K$, the internal and filtration gradings considered here correspond with the gradings $q - 3h$ and $h$, where $q$ and $h$ are the standard quantum and homological gradings on Khovanov homology. In particular, the sum of the internal and filtration gradings is $q - 2h$, often called the delta grading. 

\begin{prop}
If $K$ is Khovanov homologically thin, $t(K) = s(K)$. 
\end{prop}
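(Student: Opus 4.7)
The plan is to exploit the thinness of $K$ to force the spectral sequence of $\bar{C}(K) \otimes k$ to collapse after a single higher differential, and then to locate the surviving class using an internal-grading argument for the canonical generator.

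Recall that for a Khovanov thin knot, $\widetilde{Kh}(K; k)$ is concentrated on a single diagonal $\delta = q - 2h = \delta_0$, and a standard Lee spectral sequence argument gives $\delta_0 = s(K)$ (the reduced Lee generator sits at $(q, h) = (s(K), 0)$, hence in the unique $\delta$-grading supporting reduced Khovanov). In this paper's bigrading, $\delta$ equals internal plus filtration. Now examine the spectral sequence on $\bar{C}(K) \otimes k$. Its $E_2$ page is $\widetilde{Kh}(K; k)$, and each higher differential $d_r$ has internal degree $-3$ and filtration degree $r$, hence changes $\delta$ by $r - 3$. For thin $K$, $E_r \subseteq E_2$ lies entirely in $\delta = \delta_0$ for all $r \ge 2$, forcing $d_r = 0$ unless $r = 3$. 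Therefore $E_4 = E_\infty \cong k$ and the unique surviving class has $\delta = \delta_0$; writing its bigrading as $(a, t(K))$ gives $a + t(K) = \delta_0$.

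The main remaining step is to show $a = 0$. The generator of $H^\ast(\bar{C}(K); k)$ is the reduction mod $x$ of the unit $1 \in H^\ast(C(K); k) = k[x]$, so it suffices to show that this unit lies in internal grading $0$. For the unknot this is immediate from the construction of $C(U)$. For a general knot, choose any unknotting sequence; the crossing-change maps $c_\pm$ of equations~\eqref{poscrossingchange} and \eqref{negcrossingchange} induce isomorphisms on total homology and, being chain maps of multifactorizations, automatically preserve internal grading (each component $f_i$ has bidegree $(0, i)$). The Reidemeister isomorphisms likewise preserve internal grading. Composing gives an internal-grading-preserving isomorphism $H^\ast(C(U); k) \cong H^\ast(C(K); k)$ sending the unit $1$ to a nonzero scalar multiple of the canonical generator, so the latter also lies in internal grading $0$. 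This forces $a = 0$, giving $t(K) = \delta_0 = s(K)$.

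The main technical obstacle is verifying that the constituent pieces of $c_\pm$ --- the $I_{ij}$'s, the saddle components $s_{D_i \to D_j}$, and multiplication by $x_0-x_1-x_2+x_3$ --- each have internal degree zero once the shifts on source and target are taken into account. This is routine bookkeeping but must be done carefully to ensure the crossing-change isomorphism on homology really does preserve the internal grading.
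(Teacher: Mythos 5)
Your proof is correct and follows essentially the same approach as the paper: identify the delta grading of the surviving $E_\infty$ class of the $\bar{C}(K)$ spectral sequence as $t(K)$, identify the delta grading $\delta_0$ of reduced Khovanov homology as $s(K)$ via the Lee spectral sequence, and use thinness to conclude these must coincide. Two small remarks: the detour through showing $d_r = 0$ for $r \neq 3$ and $E_4 = E_\infty$ is unnecessary — it suffices that $E_\infty$ is a subquotient of $E_2$, hence concentrated on $\delta_0$ — and your explicit argument that the surviving class sits in internal grading $0$ (via internal-grading preservation of the crossing-change maps $c_\pm$ tracing back to the unknot) is a useful amplification of a fact the paper's proof asserts without elaboration, relying implicitly on the canonical identification of $H^*(C(K))$ with $\Z[x]$ from Corollary~\ref{linkhomology}.
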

\begin{proof}
The $E_\infty$ page of a spectral sequence is exactly the associated graded module of the homology of the complex, so in the spectral sequence coming from $\bar{C}(K) \otimes k$, there is exactly one surviving term with internal grading $0$ and filtration grading $t(K)$, so with delta grading $t(K)$. A term of the same degree will appear on the $E_2$ page, so the reduced Khovanov homology of $K$ has at least one generator in delta grading $t(K)$. An identical argument with the Lee spectral sequence shows that the reduced Khovanov homology of $K$ has a generator in delta grading $s(K)$, so if $K$ is homologically thin $t(K) = s(K)$.
\end{proof}
In particular, when $K$ is alternating, $t(K) = s(K) = - \sigma(K)$, so the nonorientable genus bounds in Theorem~\ref{nonorientablegenusbound} are always zero. This is also the case for the bounds in \cite{batson2014nonorientable} and \cite{ozsvath2016unoriented}, which is not a coincidence.
\begin{lemma}
If $f$ is any knot invariant satisfying $|f(K) - e/2| \le b$ whenever $K$ bounds a surface with first Betti number $b$ and normal Euler number $e$, then $f(K) = \sigma(K)$ for alternating knots.
\end{lemma}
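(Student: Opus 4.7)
The plan is to mirror the corresponding argument for $\upsilon$ given in \cite{ozsvath2016unoriented}: for any alternating knot $K$, produce two specific spanning surfaces whose $(b,e)$ data saturates the hypothesis on $f$ from opposite sides and forces $f(K) = \sigma(K)$.

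Concretely, fix a reduced alternating diagram $D$ of $K$ and let $F_+$ and $F_-$ be its two checkerboard surfaces, each pushed slightly into $D^4$. These are (typically nonorientable) spanning surfaces for $K$, with first Betti numbers and normal Euler numbers that can be read off directly from the combinatorics of $D$ (number of regions of each color, writhe, and crossing types). Apply the Gordon--Litherland formula
\[
\sigma(K) = \operatorname{sign}(G_{F_\pm}) - e(F_\pm)/2
\]
to each surface. The key classical input, special to alternating diagrams, is that the two Goeritz forms $G_{F_\pm}$ are definite of opposite signs, so one has $\operatorname{sign}(G_{F_+}) = b_1(F_+)$ and $\operatorname{sign}(G_{F_-}) = -b_1(F_-)$. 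This yields the two equalities
\[
\sigma(K) - e(F_+)/2 = b_1(F_+), \qquad \sigma(K) - e(F_-)/2 = -b_1(F_-).
\]

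Now feed each surface into the hypothesis $|f(K) - e/2| \le b$: the surface $F_+$ provides the upper bound $f(K) \le \sigma(K)$, and $F_-$ provides the matching lower bound $f(K) \ge \sigma(K)$, so together $f(K) = \sigma(K)$.

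Virtually none of the work is in $f$ itself; the content lies in the classical fact that alternating knots admit checkerboard surfaces whose Gordon--Litherland bound on $\sigma$ is sharp in both directions. The only real obstacle is bookkeeping: pinning down which color of checkerboard gives the positive- versus negative-definite Goeritz form, and aligning this with the sign conventions of the paper for $\sigma$ (normalized so that $\sigma(T_{2,3}) = -2$) and for the normal Euler number, so that the two inequalities land on the correct sides of $\sigma(K)$. Once those conventions are fixed the argument is immediate.
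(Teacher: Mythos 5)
Your argument is correct and uses the same two checkerboard surfaces as the paper, but the mechanism pinning $f(K)$ to $\sigma(K)$ is organized differently. You invoke the full Gordon--Litherland formula together with the classical fact that the Goeritz forms of the two checkerboard surfaces of a reduced alternating diagram are positive- and negative-definite, so that $\sigma$ saturates the constraint from each surface and squeezes $f$ from both sides. The paper avoids appealing to definiteness of the Goeritz form: it computes the first Betti numbers and normal Euler numbers of the two checkerboard surfaces combinatorially in terms of $n_+$, $n_-$, and the numbers $b, w$ of black and white regions, derives the two-sided bound $-n_+ + b - 1 \le f(K) \le n_- - w + 1$, and observes that these bounds already coincide because a knot diagram has two more regions than crossings, i.e.\ $n_+ + n_- + 2 = b + w$. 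Since $\sigma$ is itself an invariant satisfying the hypothesis --- by the Gordon--Litherland inequality, cited as \cite{gordon1978signature} --- it is forced to the same common value, and so is $f$. The paper's route is slightly more economical, needing only the Gordon--Litherland \emph{inequality} for $\sigma$ plus a planar Euler count, while yours needs the equality form of Gordon--Litherland plus the definiteness result; but the two are logically equivalent repackagings, since the Goeritz form being definite for alternating diagrams is exactly what makes the Gordon--Litherland inequality tight on both checkerboard surfaces, which is what the coincidence of the two bounds is tracking.
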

\begin{proof}
If $K$ has an alternating diagram with $n_+$ positive crossings, $n_-$ negative crossings, and $w$ white regions and $b$ black regions in the checkerboard coloring, then one of the checkerboard surfaces with boundary $K$ has first Betti number $n_+ + n_- - w + 1$ and normal Euler number $-2n_+$, and the other has first Betti number $n_+ + n_- - b + 1$ and normal Euler number $2n_-$. Therefore, $f(K)$ satisfies the bounds
\begin{equation*}
-n_+ + b -1 \le f(K) \le n_- - w + 1
\end{equation*}
Since a knot diagram has two more regions than crossings, $n_+ + n_- + 2 = b + w$, so the upper and lower bound coincide. Since, in particular, $\sigma(K)$ satisfies these bounds \cite{gordon1978signature}, $f(K) = \sigma(K)$. 
\end{proof}

For nonalternating knots, however, $t$ can diverge from both $s$ and $\sigma$. 
\begin{prop}\label{torusknots}
For positive, coprime integers $p,q$, the $t$ invariant of the torus knot $T_{p,q}$ is determined by the recurrence $T_{p,p+q} = T_{p,q} + \lfloor p^2/2 \rfloor$. 
\end{prop}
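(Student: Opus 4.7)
The plan is to establish the recurrence by constructing two nonorientable cobordisms between $T_{p,q}$ and $T_{p,p+q}$, one realizing each direction of the claimed equality, and applying the nonorientable cobordism inequality $|t(K_1) - t(K_2) - e/2| \le b$ established in Section 4. The geometric input is that the braid word of $T_{p,p+q}$ equals that of $T_{p,q}$ concatenated with one full twist $\Delta^2 = (\sigma_1 \cdots \sigma_{p-1})^p$ on $p$ strands, adding $p(p-1)$ positive crossings; each such crossing can be resolved by either an oriented (Seifert) saddle or a nonorientable band.

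For the upper bound, I would construct a cobordism $F_+\colon T_{p,q} \to T_{p,p+q}$ in which a carefully chosen subset of the $p(p-1)$ crossings in the added full twist is resolved using nonorientable bands of a specific sign and the remaining crossings are resolved by oriented saddles. The resolution pattern should be chosen so that $F_+$ has $b$ nonorientable bands and normal Euler number $e$ with $-e/2 + b = \lfloor p^2/2 \rfloor$, forcing $t(T_{p,p+q}) \le t(T_{p,q}) + \lfloor p^2/2 \rfloor$. For the lower bound, a symmetric construction in the reverse direction (with the nonorientable bands flipped in sign) provides a cobordism $F_-\colon T_{p,p+q} \to T_{p,q}$ with $e'/2 - b' = \lfloor p^2/2 \rfloor$, yielding $t(T_{p,p+q}) \ge t(T_{p,q}) + \lfloor p^2/2 \rfloor$.

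The main obstacle will be identifying the combinatorial pattern of resolutions realizing these exact parameters. Taking all resolutions to be oriented gives a genus-$\frac{p(p-1)}{2}$ orientable cobordism, from which the nonorientable inequality only yields $|t(T_{p,p+q}) - t(T_{p,q})| \le p(p-1)$---tight when $p = 2$, where $\lfloor p^2/2 \rfloor = p(p-1)$, but too weak when $p \ge 3$. The correct floor $\lfloor p^2/2 \rfloor$ should instead emerge from a ``staircase'' pattern that groups crossings of the full twist by strand pair and uses a single nonorientable band per group, with the parity dependence of the floor reflecting whether $p$ is even or odd. As a sanity check, the resulting recurrence agrees with the known Feller--Krcatovich recurrence for $\upsilon(T_{p,q})$ together with the identification $t = \upsilon$ on torus knots asserted in the introduction.
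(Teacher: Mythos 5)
Your high-level plan---get the upper bound and lower bound on $t(T_{p,p+q}) - t(T_{p,q})$ from two separate cobordisms and the nonorientable inequality---does not match the paper's proof, and the missing ``combinatorial pattern of resolutions'' you flag is not a detail that can be filled in: it is the place where the argument breaks. The problem is with the upper bound. Any cobordism from $T_{p,q}$ to $T_{p,p+q}$ built by inserting the $p(p-1)$ crossings of the full twist one band at a time, whether those bands are orientable saddles or nonorientable bands, has $b_1 = p(p-1)$ either way; making some bands nonorientable changes $e$ but does not reduce the count $b$, and the inequality $|t(K_1)-t(K_2)-e/2|\le b$ then degrades rather than improves. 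More fundamentally, the nonorientable cobordism bound simply cannot produce the upper bound here, because that bound is two-sided and symmetric in the band count, so a single surface with $b\ge 1$ nonorientable bands can never pin $t(T_{p,p+q})-t(T_{p,q})$ to a point. The paper therefore does not prove the upper bound via a nonorientable cobordism at all.

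What the paper actually does is split the two directions by completely different mechanisms. For the upper bound it proves a separate ``full twist'' inequality (Lemma~\ref{ttwistbound}): there is an \emph{orientable} cobordism from $T_{p,p+q}$ to the split union $T_{p,q} \sqcup T_{p,p}$ that splits off a loop from each twisted strand, and by Corollary~\ref{splitinj} the induced map on $\bar{C}\otimes k$ is filtered and injective on total homology. Combining this with Sto\v{s}i\'c's computation that the Khovanov homology of $T_{p,p}$ is supported in homological gradings $0$ through $\lfloor p^2/2 \rfloor$ forces $t(T_{p,p+q}) \le t(T_{p,q}) + \lfloor p^2/2 \rfloor$. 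This is not a genus-type bound and has no nonorientable analogue. For the lower bound the paper does use a single nonorientable band (the pinch move), but the target is $T_{r,s+r}$, not $T_{p,q}$, and the argument is completed by an induction that keeps track of how the pinch-move Euler number changes under adding a full twist, comparing the upper bound from the full twist inequality to the lower bound from the pinch at each stage. You should read Lemma~\ref{ttwistbound} and its proof; the Khovanov homology input for $T_{p,p}$ is the ingredient your proposal is missing, and I do not see a way to avoid it with cobordism counting alone. Also note a small error in your sanity check: the paper's relation on torus knots is $t(T_{p,q}) = -2\upsilon(T_{p,q})$, not $t=\upsilon$.
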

Since, by Propositions~2.1 and 2.2 from \cite{feller2017cobordisms}, the invariant $-2\upsilon(K) = -2\Upsilon_K(1)$ satisfies this same recurrence, we have $t(T_{p,q}) = -2\upsilon(T_{p,q})$. To prove Proposition~\ref{torusknots}, we will start with a more general upper bound.
\begin{lemma}\label{ttwistbound}
If $K_2$ is obtained from $K_1$ by a full twist on $n$ coherently oriented strands, $t(K_2) \le t(K_1) + \lfloor n^2 / 2 \rfloor$
\end{lemma}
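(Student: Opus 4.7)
The plan is to use the nonorientable-cobordism bound from the preceding proposition, which gives $t(K_2) - t(K_1) \le e(F)/2 + b_1(F)$ for any (possibly nonorientable) cobordism $F$ from $K_1$ to $K_2$. So the task reduces to exhibiting such a cobordism $F$ with
\[ e(F)/2 + b_1(F) = \lfloor n^2/2 \rfloor. \]

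First I would take $F$ to be the product cobordism outside the twist region, so that all the topology sits in a neighborhood of the $n$ coherently oriented strands where the full twist $\Delta_n^2$ is added. Inside that neighborhood I would attach a specific collection of bands converting the trivial $n$-braid on the $K_1$-side into $\Delta_n^2$ on the $K_2$-side. Because the strands are coherently oriented, each such band is nonorientable and contributes a crosscap to $F$. Second, I would count bands to obtain $b_1(F)$ and use Lemma~4.2 of~\cite{ozsvath2016unoriented} (which expresses the normal Euler number as half of the writhe difference of compatible oriented diagrams) to compute $e(F)$, with the framings of the bands chosen precisely so that these invariants combine to $\lfloor n^2/2 \rfloor$.

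The main obstacle is producing a band configuration that actually achieves $\lfloor n^2/2 \rfloor$ rather than something weaker. For instance, the naive orientable cobordism obtained by Seifert-smoothing each of the $n(n-1)$ crossings of $\Delta_n^2$ has $e = 0$ but $b_1 = n(n-1)+1$, which only meets the bound at $n \le 2$; for larger $n$ a genuinely nonorientable construction is required. The parity of $n$ plays an essential role: when $n$ is even the strands pair up symmetrically and one gets $n^2/2$ on the nose, while for odd $n$ one strand must be treated asymmetrically, lowering the contribution by a half and producing the floor. Once the right configuration is pinned down, both the $b_1$ count and the writhe computation for $e$ are routine.
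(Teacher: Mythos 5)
Your proposal takes a genuinely different route from the paper, and as written it has a real gap. The paper's proof does not use a cobordism from $K_1$ to $K_2$ at all. Instead, it constructs an orientable, genus-zero cobordism from $K_2$ to the split union $K_1 \sqcup T_{n,n}$ by splitting off a parallel loop from each twisted strand. Corollary~\ref{splitinj} makes the induced map on $\bar{C}$ injective; the complex $\bar{C}(K_1 \sqcup T_{n,n})$ then tensor-decomposes, and the bound $\lfloor n^2/2 \rfloor$ falls out of Sto\v{s}i\'c's computation \cite{stovsic2009khovanov} that the Khovanov homology of $T_{n,n}$ is supported in homological degrees $0,\dots,\lfloor n^2/2 \rfloor$. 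That algebraic input is what produces the exact constant; the paper does not derive it from a band presentation.

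Your plan, by contrast, is to exhibit a nonorientable cobordism $F$ supported in the twist region with $e(F)$ and $b_1(F)$ tuned so the nonorientable bound yields exactly $\lfloor n^2/2 \rfloor$. Two issues. First, a bookkeeping one: the paper's proposition has $b_1(F) = b+1$ and reads $|t(K_1) - t(K_2) - e/2| \le b$, so the upper bound it gives is $t(K_2) - t(K_1) \le -e/2 + b_1(F) - 1$, not $e/2 + b_1(F)$ as you wrote; you need to sort out the orientation convention for $e$ and the off-by-one in $b_1$ before you can even state the target numerology. Second, and more seriously, you do not actually construct $F$. You identify this as ``the main obstacle'' and then assert that once the right band configuration is ``pinned down'' the rest is routine, but that construction \emph{is} the proof. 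It is not obvious that a band collection in the twist region converting the trivial $n$-braid to $\Delta_n^2$ with $-e/2 + b_1 - 1 = \lfloor n^2/2 \rfloor$ exists. If it did exist it would be a purely topological fact with its own consequences (for instance it would immediately give the analogous inequality for the knot signature and any other invariant satisfying the nonorientable genus bound), so one should expect it to require a real argument rather than a routine count. Until you exhibit the cobordism and compute its $e$ and $b_1$ against the corrected inequality, the argument is incomplete.
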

\begin{proof}
There is a cobordism from $K_2$ to the disjoint union $K_1 \sqcup T_{n,n}$ formed by attaching a band splitting a loop from each of the twisted strands, so that the split off loops form the torus link $T_{n,n}$ and the untwisted knot $K_1$ remains. By Corollary~\ref{splitinj}, this gives an injection $f: \bar{C}(K_2)\otimes k \to \bar{C}(K_1 \sqcup T_{n,n}) \otimes k$ that shifts the internal grading by $n$ but preserves the filtration grading (the second reduced complex is taken with the basepoint on $K_1$). Suppose $c \in \bar{C}(K_2) \otimes k$ is a cycle generating the homology that is supported in filtration gradings at least $t(K_2)$. Then $f(c)$ is a nonzero cycle in $\bar{C}(K_1 \sqcup T_{n,n}) \otimes k$ supported in filtration gradings at least $t(K_2)$. 

The complex $\bar{C}(K_1 \sqcup T_{n,n})$ is isomorphic to a tensor product of $\bar{C}(K_1)$ with the quotient $C(T_{n,n})/x^2$. Any cycle in $\bar{C}(K_1) \otimes k$ that is nonzero in homology contains a term with filtration grading at most $t(K_1)$, so for there to be a cycle in $\bar{C}(K_1 \sqcup T_{n,n}) \otimes k$ supported in filtration gradings at least $t(K_2)$ there must be a nonzero cycle in $C(T_{n,n})/x^2 \otimes k$ with filtration grading at least $t(K_2) - t(K_1)$. However, such an element can only exist if there is an element in the $E_2$ page of the spectral sequence coming from $C(T_{n,n})/x^2 \otimes k$ in filtration grading at least $t(K_2) - t(K_1)$, so the unreduced Khovanov homology of $T_{n,n}$ has a nonzero element with homological grading at least $t(K_2) - t(K_1)$. It was computed by Sto\v{s}i\'c \cite{stovsic2009khovanov} that this homology group is supported in homological gradings between $0$ and $\lfloor n^2 / 2 \rfloor$, so $t(K_2) - t(K_1) \le \lfloor n^2 / 2 \rfloor$.
\end{proof}

Therefore,  $T_{p,p+q} \le T_{p,q} + \lfloor p^2/2 \rfloor$. To find a matching lower bound, we can use the bounds from nonorientable cobordisms. The pinch move on $T_{p,q}$ (discussed in more detail in \cite{jabuka2018nonorientable}) is the cobordism coming from taking the unoriented resolution of any of the crossings in the standard braid diagram of $T_{p,q}$. 

Each pinch move is a cobordism with $b_1 = 1$ from $T_{p,q}$ to $T_{r,s}$ for some $0<r<p$ and $0<s<q$, but neither the values $r,s$ nor the normal Euler number of the cobordism are entirely straightforward to compute. However, they all change in a predictable way under a full twist:
\begin{lemma}
If the pinch move from $T_{p,q}$ is a cobordism to $T_{r,s}$ with normal Euler number $e$, then the pinch move on $T_{p,q+p}$ is a cobordism to $T_{r,s+r}$ with normal Euler number $e - p^2 + r^2$.
\end{lemma}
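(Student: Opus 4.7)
The plan is to realize both pinch moves in compatible braid diagrams so that the effect of the added full twist can be tracked directly through the pinch band, and then to compute the Euler number shift using a framing calculation.

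Fix a standard braid diagram of $T_{p,q}$ with the pinch band $B$ attached at one specific crossing. By hypothesis, performing this pinch and then applying a sequence of Reidemeister moves $\mathcal{R}$ yields the standard braid diagram of $T_{r,s}$, with the resulting cobordism having normal Euler number $e$. To realize the pinch on $T_{p,q+p}$, insert a full twist box on the $p$ braid strands into the diagram at the top, disjoint from $B$, and then apply the pinch at the same crossing as before. The resulting diagram agrees with the old pinched diagram below and has the added full twist on $p$ strands above.

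The first task is to identify the resulting knot as $T_{r,s+r}$. Applying the isotopy $\mathcal{R}$ to the lower portion converts it into the standard $r$-braid closure of $T_{r,s}$. During this process, the $p$ strands entering the full twist box get reorganized via the pinch: $p - r$ of them get absorbed into the band, while the remaining $r$ strands trace the braid axes of $T_{r,s}$. Sliding the full twist down through the moves in $\mathcal{R}$ converts a full twist on $p$ strands into a full twist on $r$ strands together with some additional local crossings, which cancel after further Reidemeister moves. This identifies the pinched knot as $T_{r,s+r}$.

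For the Euler number, use the framing interpretation of normal Euler number: inserting a full twist on $k$ coherently oriented parallel strands into a braid diagram changes the self-linking of the band's push-off through the new twist by $\pm k^2$. Applying this at the input side (a full twist on $p$ strands in $T_{p,q+p}$) contributes $-p^2$, and at the output side (a full twist on $r$ strands in $T_{r,s+r}$) contributes $+r^2$, giving $e_{\text{new}} = e - p^2 + r^2$, as claimed.

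The main obstacle is the precise bookkeeping in the identification step: showing that $\mathcal{R}$ together with the push of the full twist through the pinch genuinely produces the standard diagram of $T_{r,s+r}$, rather than a knot isotopic to $T_{r,s+r}$ but presented with different writhe (which would shift the Euler number computation). This requires a careful geometric analysis of how the pinch move interacts with the braid structure of $T_{p,q}$, and in particular how a full twist on $p$ strands descends to a full twist on $r$ strands plus a cancelling braid word under the sequence of moves that reduces the pinched $p$-braid to the standard $r$-braid presentation of $T_{r,s}$.
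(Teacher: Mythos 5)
Your overall strategy — track how adding a full twist box changes the diagram and read off the shift in normal Euler number from writhe bookkeeping — is the same kind of argument the paper makes, but the mechanism you invoke is not quite the one that makes the numbers come out, and the central claim of your framing step is left unjustified.

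The paper's computation hinges on a single concrete observation: when the pinch on $T_{p,q}$ reduces the braid index from $p$ to $r$, it reverses the orientation of exactly $k = (p-r)/2$ of the $p$ parallel strands passing through the twist region, and the Euler number of a single unoriented band (on an initially all-positive diagram) is $-2$ times the number of negative crossings in the pinched diagram. Adding a full twist on the $p$ strands before the pinch adds no negative crossings; after the pinch it creates a negative crossing exactly where a reversed strand crosses an unreversed one, which is $2k(p-k) = \tfrac12(p^2 - r^2)$ crossings since each pair meets twice in a full twist. This gives the $-p^2 + r^2$ shift directly. Your version asserts that "inserting a full twist on $k$ coherently oriented parallel strands \dots changes the self-linking of the band's push-off through the new twist by $\pm k^2$," and then applies it with $k=p$ on the input side and $k=r$ on the output side. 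That is not a statement you can just apply twice with different $k$: the writhe of a full twist on $m$ coherently oriented strands is $m(m-1)$, not $m^2$, and after the pinch the twist box does not contain $r$ coherently oriented strands but $p$ strands of which $(p-r)/2$ are reversed, whose writhe contribution is $(p-2k)^2 - p = r^2 - p$. The two $-p$ terms cancel, which is why your answer happens to be right, but the "$\pm k^2$" heuristic as stated is neither a proved lemma nor literally correct, and a reader cannot check it. You also say "$p-r$ of them get absorbed into the band," whereas the relevant count is that $(p-r)/2$ strands are reversed — all $p$ strands are still present in the pinched diagram before isotopy.

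You correctly flag the remaining gap in the identification of the target knot as $T_{r,s+r}$; the paper also does not reprove this (it is implicit in the behavior of the pinch move, and discussed in the cited reference). But for the Euler number part, you should replace the unproved framing assertion with the explicit count of new negative crossings created among the $k$ reversed and $p-k$ unreversed strands in the twist box. As written, that step is the gap.
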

\begin{proof}
Since all crossings in the original diagram of $T_{p,q}$ are positive, the normal Euler number of the pinch move is equal to $-2$ times the number of negative crossings in the diagram immediately after the pinch move. The rectangle in the diagram of $T_{p,q}$ in which the twisting will be done initially contains $p$ parallel coherently oriented strands, and after the pinch move $k = (p-r)/2$ strands will be oriented the other way. When these parallel strands are replaced by a full twist to form $T_{p,q+p}$, before the pinch move there will be no negative crossings but after the pinch move there will be a negative crossing wherever one of the reversed strands crosses one of the unreversed strands. Each pair of strands crosses twice in a full twist, so the number of new negative crossings created is therefore
\begin{equation*}
2k(p-k) = 2\cdot \frac12 (p-r) \cdot \frac12 (p+r)  = \frac12 (p^2-r^2)
\end{equation*}
and the change in normal Euler number is $-p^2 + r^2$.
\end{proof}

This fact together with the full twist inequality gives an inductive proof of the fact that the two bounds coincide:
\begin{prop}
For any $p,q > 0$ such that $T_{p,q}$ reduces to $T_{r,s}$ by a pinch move of Euler number $e$,
\begin{equation*}
t(T_{p,q}) + \lfloor p^2/2 \rfloor = t(T_{p,q+p}) = t(T_{r,s+r}) - (e-p^2+r^2)/2 - 1
\end{equation*}
\end{prop}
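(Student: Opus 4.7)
The plan is to sandwich $B := t(T_{p,q+p})$ between $A := t(T_{p,q}) + \lfloor p^2/2 \rfloor$ from above and $C := t(T_{r,s+r}) - (e-p^2+r^2)/2 - 1$ from below, and then use induction on $p$ to force $A \le C$, collapsing the sandwich to the stated three-way equality.

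First, the upper bound $B \le A$ is immediate from Lemma~\ref{ttwistbound} applied to the full twist on $p$ coherently oriented strands relating $T_{p,q}$ and $T_{p,q+p}$. Second, the lower bound $C \le B$ comes from the pinch move cobordism $T_{p,q+p} \to T_{r,s+r}$, which by the preceding Lemma (governing how pinch-move Euler numbers change under a full twist) is a single-band nonorientable cobordism of Euler number $e - p^2 + r^2$; the single-band case ($b = 1$) of the nonorientable cobordism bound then gives $t(T_{p,q+p}) - t(T_{r,s+r}) - (e - p^2 + r^2)/2 \ge -1$, which rearranges to $B \ge C$.

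The main task is then $A \le C$, which I would prove by strong induction on $p$, with base case $p = 1$ (for which $T_{1,q}$ is always the unknot and all three quantities vanish). In the inductive step, apply the proposition to $T_{r,s}$ -- which has strictly smaller first parameter $r < p$ -- to obtain $t(T_{r,s+r}) = t(T_{r,s}) + \lfloor r^2/2 \rfloor$. Because a pinch move requires $(p-r)/2$ to be a nonnegative integer, $p$ and $r$ share parity, so the arithmetic identity $\lfloor r^2/2 \rfloor + (p^2 - r^2)/2 = \lfloor p^2/2 \rfloor$ holds; substituting this into $C$ and cancelling the $\lfloor p^2/2 \rfloor$ terms collapses $C - A$ down to the clean expression $t(T_{r,s}) - t(T_{p,q}) - e/2 - 1$. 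The nonorientable cobordism bound applied to the original pinch $T_{p,q} \to T_{r,s}$ constrains this expression to a one-unit window, and combined with the sandwich $A \ge B \ge C$ (which already forces $C \le A$) together with the fact that $t$ takes only even integer values, a parity argument rules out the residual slack and forces $C = A$, yielding $A = B = C$.

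The main technical obstacle is the parity and sign bookkeeping in the last step: the single-band nonorientable cobordism bound is only sharp to within $\pm 1$, and only the combined force of the full-twist upper bound, the inductive hypothesis applied to the smaller torus knot $T_{r,s}$, and the even-parity constraint on $t$ suffices to close this slack exactly. A secondary subtlety is confirming the parity statement $p \equiv r \pmod 2$ at each pinch, which however is automatic from the definition of the pinch move via strand reversal.
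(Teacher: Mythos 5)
Your first two steps match the paper's exactly: the full-twist lemma gives the upper bound $t(T_{p,q+p}) \le t(T_{p,q}) + \lfloor p^2/2 \rfloor$, and the single-band nonorientable cobordism bound applied to the pinch $T_{p,q+p} \to T_{r,s+r}$ (with Euler number $e - p^2 + r^2$ from the preceding lemma) gives the lower bound. The divergence, and the gap, is in your third step.

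The paper's closing move is \emph{not} to sandwich the upper and lower bounds against each other using the cobordism inequality. Instead it uses \emph{two} inductive hypotheses: (b) $t(T_{r,s+r}) = t(T_{r,s}) + \lfloor r^2/2 \rfloor$, which you cite, and also (a) $t(T_{p,q}) = t(T_{r,s}) - e/2 - 1$, which you do not. Hypothesis (a) is the middle-equals-right equality of this same proposition applied to the pair $(p, q-p)$, i.e.\ before the last full twist; it has the same $p$ but strictly smaller $q$, so a strong induction on $p$ alone does not put it within reach. Substituting (b) and the parity identity $\lfloor p^2/2\rfloor - \lfloor r^2/2\rfloor = (p^2-r^2)/2$ collapses $C - A$ to $t(T_{r,s}) - t(T_{p,q}) - e/2 - 1$, exactly as you compute; but at that point the paper plugs in the \emph{equality} (a) and gets $C - A = 0$ on the nose. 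Your replacement for (a) -- the single-band cobordism bound for $T_{p,q} \to T_{r,s}$ -- only constrains $t(T_{p,q}) - t(T_{r,s})$ to a closed interval of length $2$ about the target value, so it pins $C - A$ to $\{-2,-1,0\}$. The sandwich gives only $C - A \le 0$, which is vacuous here, and the evenness of $t$ removes at most the middle value $-1$ (and only when $e/2$ has the right parity), still leaving $C - A \in \{-2, 0\}$. Nothing in your argument rules out $C - A = -2$, so the claimed forcing of $C = A$ does not follow.

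The fix is to induct on a quantity that strictly decreases under both substitutions $(p,q) \mapsto (r,s)$ and $(p,q) \mapsto (p,q-p)$ -- for instance $p+q$ or $\max(p,q)$ -- so that both (a) and (b) are available as inductive hypotheses, as in the paper. With (a) in hand, the cobordism bound and the parity of $t$ are not needed at all at this stage; the identity $A = C$ is a one-line algebraic consequence of (a), (b), and $\lfloor p^2/2\rfloor - \lfloor r^2/2\rfloor = (p^2-r^2)/2$, and then the sandwich pins down $B$.
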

\begin{proof}
The left hand side of this equation is an upper bound for $t(T_{p,q+p})$ and the right hand side is a lower bound, so it suffices to show that the bounds are the same. Inductively, we may assume that
\begin{equation*}
t(T_{p,q}) = t(T_{r,s}) - e/2 - 1
\end{equation*}
and
\begin{equation*}
t(T_{r,s+r}) = t(T_{r,s}) + \lfloor r^2/2 \rfloor,
\end{equation*}
so
\begin{equation*}
t(T_{p,q}) = t(T_{r,s+r}) - \lfloor r^2/2 \rfloor - e/2 - 1
\end{equation*}
and the needed equality follows from adding $\lfloor p^2/2 \rfloor$ to both sides of this equation and the fact that $p$ and $r$ have the same parity.
\end{proof}

Combining the full twist inequality for $t$ used in the above computation with a similar bound on $s$ gives an additional corollary:
\begin{cor}
If $K$ and $K'$ differ by a full twist on $n \ge 6$ coherently oriented strands, $K$ and $K'$ cannot both be homologically thin.
\end{cor}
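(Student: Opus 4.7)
The plan is to complement Lemma~\ref{ttwistbound}, which gives an upper bound $t(K') - t(K) \le \lfloor n^2/2 \rfloor$, with an analogous twist inequality for the Rasmussen invariant $s$ obtained by rerunning the same proof verbatim with the Lee spectral sequence in place of the $E(-1)$ spectral sequence.

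The first step is to carry out this $s$-version. The splitting cobordism from $K'$ to $K \sqcup T_{n,n}$, being orientable of genus zero, induces a filtered injection on reduced Lee complexes (the Lee analog of Corollary~\ref{splitinj}), and the tensor-product decomposition $\bar{C}_{\mathrm{Lee}}(K \sqcup T_{n,n}) \cong \bar{C}_{\mathrm{Lee}}(K) \otimes (C_{\mathrm{Lee}}(T_{n,n})/x^2)$ holds as before. The Sto\v{s}i\'c bound on the Khovanov width of $T_{n,n}$ is then replaced by the corresponding bound in the Lee complex: the surviving Lee classes of $T_{n,n}$ are the oriented-resolution generators, and their maximal homological filtration grading $L(n)$ can be read off directly from the pairwise linking numbers between components of $T_{n,n}$ (all equal to $1$). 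The outcome is
\[
s(K') - s(K) \le L(n),
\]
with $L(n)$ growing like $\binom{n}{2}$, so that $L(n) < \lfloor n^2/2 \rfloor$ precisely once $n \ge 6$.

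The second step is to combine the two bounds. If $K$ and $K'$ were both homologically thin, the proposition preceding Proposition~\ref{torusknots} would give $t(K) = s(K)$ and $t(K') = s(K')$, whence
\[
t(K') - t(K) = s(K') - s(K) \le L(n) < \lfloor n^2/2 \rfloor.
\]
A matching lower bound $t(K') - t(K) \ge \lfloor n^2/2 \rfloor$ comes from revisiting the proof of Lemma~\ref{ttwistbound} in the thin setting: thinness of $K$ forces a generator of its total $\bar{C}$-homology to lift, under the splitting map, to a class whose only possible nonzero components in $\bar{C}(K) \otimes (C(T_{n,n})/x^2) \otimes k$ are the ones paired with the top Sto\v{s}i\'c grading of $T_{n,n}$, and thinness of $K'$ forces this saturation to persist back through the filtered injection. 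The resulting equality contradicts the strict inequality just above.

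The main obstacle is the quantitative estimate in the first step: the Lee generators of $T_{n,n}$ do occupy a range quadratic in $n$, so extracting a clean $L(n)$ that is strictly dominated by $\lfloor n^2/2 \rfloor$ past the threshold $n = 6$ requires a careful orientation-by-orientation enumeration of the Lee generators of $T_{n,n}$ together with their filtration shifts relative to the oriented resolution. The propagation-of-saturation argument in the second step is conceptually simpler but needs to be checked in detail, since it uses the collapse of both spectral sequences at $E_2$ for thin knots and the compatibility of the Lee and $E(-1)$ filtrations under the splitting cobordism.
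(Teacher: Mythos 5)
Your overall plan — pair the $t$-twist inequality with a Lee/$s$-twist inequality and use thinness to equate $s$ and $t$ — matches the paper's strategy, but you have the Lee inequality pointing in the wrong direction, and the "patch" you introduce to compensate for that is a claim the paper neither proves nor needs and that is in fact false.

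The paper's argument is a clean one-directional squeeze: Lemma~\ref{ttwistbound} gives the \emph{upper} bound $t(K') - t(K) \le \lfloor n^2/2 \rfloor$, and the analogous Lee-homology argument gives a \emph{lower} bound $s(K') - s(K) \ge (n-1)(n-2)$. If both $K$ and $K'$ are thin, then $s = t$ for both, so $(n-1)(n-2) \le t(K') - t(K) \le \lfloor n^2/2 \rfloor$, which is impossible once $n \ge 6$ (at $n=6$: $20 > 18$). No lower bound on $t(K')-t(K)$ is ever needed.

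Your version instead tries to prove an \emph{upper} bound $s(K') - s(K) \le L(n)$ with $L(n) \sim \binom{n}{2}$ and then closes the contradiction via a claimed lower bound $t(K') - t(K) \ge \lfloor n^2/2 \rfloor$. Both halves fail. For the first half, the claimed upper bound is contradicted already by torus knots: a full positive twist on $p$ strands sends $T_{p,q}$ to $T_{p,p+q}$ and, by Rasmussen's computation, $s(T_{p,p+q}) - s(T_{p,q}) = (p-1)(p+q-1) - (p-1)(q-1) = p(p-1)$, which exceeds $\binom{p}{2}$ for all $p\ge 2$. (The Lee filtration is the quantum grading, not the homological grading, so the Sto\v{s}i\'c $h$-range of $T_{n,n}$ does not transfer the way the proposal assumes, and the shift from the cobordism's Euler characteristic runs in the opposite direction from the $t$-case.) For the second half, the only thing Lemma~\ref{ttwistbound} gives is an upper bound on $t(K')-t(K)$; nothing in the paper produces a matching lower bound, and the "propagation-of-saturation" sketch does not supply one — thinness of $K$ and $K'$ places no constraint forcing $t(K')-t(K)$ to hit the Sto\v{s}i\'c maximum, and one can see this is false already by taking $K$ and $K'$ where the twist is essentially localized. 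Finally, even granting your $L(n) \sim \binom{n}{2}$, the claimed threshold is off: $\binom{n}{2} < \lfloor n^2/2 \rfloor$ holds for every $n\ge 2$, so the inequality you state would give no special role to $n\ge 6$, which is a sign the quantitative step has gone astray. The correct threshold $n \ge 6$ arises from the comparison $(n-1)(n-2) > \lfloor n^2/2 \rfloor$.
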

\begin{proof}
By Lemma~\ref{ttwistbound}, $t(K') \le t(K) + \lfloor n^2/2 \rfloor$, and an essentially identical argument using the cobordism maps on Lee homology to bound the $s$ invariant gives that $s(K') \ge s(K) + (n-1)(n-2)$. If $K$ is Khovanov homologically thin, then $s(K) = t(K)$, so
\begin{equation*}
s(K') \ge s(K) + (n-1)(n-2) = t(K) + (n-1)(n-2) \ge t(K') + (n-1)(n-2) - \lfloor n^2/2 \rfloor
\end{equation*}
For $n \ge 6$, $(n-1)(n-2) > \lfloor n^2 / 2 \rfloor$, so $s(K') > t(K')$ and $K'$ is not homologically thin. 
\end{proof}

The equality $t(K) = -2\upsilon(K)$ also holds for any knot that is thin for both Khovanov homology and knot Floer homology, such as an alternating knot. However, these two invariants are not equal on all knots. For the $2$-twisted positive Whitehead double of $T_{2,3}$, the same knot used by Hedden and Ording \cite{hedden2008ozsvath} to show that the invariants $s$ and $\tau$ are distinct, the unreduced Khovanov homology has Poincar\'e polynomial
\begin{align*}
&q^{18} t^9+2 q^{16} t^8+q^{14} t^7+q^{12} t^6+2 q^{10} t^5+q^{10} t^4+q^8 t^4+2 q^8 t^3+q^8 t^2+q^6 t^2\\
+ &q^6t + q^4 t + q^4 + 2 q^2 + 2 q^{-2}t^{-1} + t^{-2} + q^{-2}t^{-3} + q^{-4}t^{-4}
\end{align*}
The $t$ invariant of this knot $K$ will be the homological grading of the surviving generator on the $E_\infty$ page, which has internal grading $0$. This generator must come from an element on the $E_2$ page with internal grading $q-3t$ equal to zero, but in the above Poincar\'e polynomial the only such element has degree $q^6t^2$. Therefore, $t(K) = 2$. However, by Proposition~1.5 in \cite{feller2019upsilon}, $\upsilon(K) = 0$. 

\bibliographystyle{amsplain}
\bibliography{tinv.bib}

\providecommand{\bysame}{\leavevmode\hbox to3em{\hrulefill}\thinspace}
\providecommand{\MR}{\relax\ifhmode\unskip\space\fi MR }
\providecommand{\MRhref}[2]{%
  \href{http://www.ams.org/mathscinet-getitem?mr=#1}{#2}
}
\providecommand{\href}[2]{#2}
\begin{thebibliography}{10}

\bibitem{batson2014nonorientable}
Joshua Batson, \emph{Nonorientable slice genus can be arbitrarily large},
  Mathematical Research Letters \textbf{21} (2014), no.~3, 423--436.

\bibitem{crainic2004perturbation}
Marius Crainic, \emph{On the perturbation lemma, and deformations}, arXiv
  preprint math/0403266 (2004).

\bibitem{dunfield2006superpolynomial}
Nathan~M Dunfield, Sergei Gukov, and Jacob Rasmussen, \emph{The superpolynomial
  for knot homologies}, Experimental Mathematics \textbf{15} (2006), no.~2,
  129--159.

\bibitem{feller2017cobordisms}
Peter Feller and David Krcatovich, \emph{On cobordisms between knots, braid
  index, and the {Upsilon}-invariant}, Mathematische Annalen \textbf{369}
  (2017), no.~1-2, 301--329.

\bibitem{feller2019upsilon}
Peter Feller, JungHwan Park, and Arunima Ray, \emph{On the {U}psilon invariant
  and satellite knots}, Mathematische Zeitschrift \textbf{292} (2019), no.~3-4,
  1431--1452.

\bibitem{gordon1978signature}
C~Mc Gordon and Richard~A Litherland, \emph{On the signature of a link},
  Inventiones mathematicae \textbf{47} (1978), no.~1, 53--69.

\bibitem{hedden2013khovanov}
Matthew Hedden and Yi~Ni, \emph{Khovanov module and the detection of unlinks},
  Geometry \& Topology \textbf{17} (2013), no.~5, 3027--3076.

\bibitem{hedden2008ozsvath}
Matthew Hedden and Philip Ording, \emph{The {O}zsv{\'a}th-{S}zab{\'o} and
  {R}asmussen concordance invariants are not equal}, American journal of
  mathematics \textbf{130} (2008), no.~2, 441--453.

\bibitem{jabuka2018nonorientable}
Stanislav Jabuka and Cornelia~A Van~Cott, \emph{On a nonorientable analogue of
  the {M}ilnor conjecture}, arXiv preprint arXiv:1809.01779 (2018).

\bibitem{kamada1989nonorientable}
Seiichi Kamada, \emph{Nonorientable surfaces in 4-space}, Osaka Journal of
  Mathematics \textbf{26} (1989), no.~2, 367--385.

\bibitem{khovanov2004link}
Mikhail Khovanov, \emph{Link homology and {F}robenius extensions}, arXiv
  preprint math/0411447 (2004).

\bibitem{khovanov2004matrix}
Mikhail Khovanov and Lev Rozansky, \emph{Matrix factorizations and link
  homology}, arXiv preprint math/0401268 (2004).

\bibitem{khovanov2007virtual}
\bysame, \emph{Virtual crossings, convolutions and a categorification of the
  {SO (2N)} {K}auffman polynomial}, arXiv preprint math/0701333 (2007).

\bibitem{livernet2020spectral}
Muriel Livernet, Sarah Whitehouse, and Stephanie Ziegenhagen, \emph{On the
  spectral sequence associated to a multicomplex}, Journal of Pure and Applied
  Algebra \textbf{224} (2020), no.~2, 528--535.

\bibitem{ni2015cosmetic}
Yi~Ni and Zhongtao Wu, \emph{Cosmetic surgeries on knots in {$S^3$}}, Journal
  f{\"u}r die reine und angewandte Mathematik (Crelles Journal) \textbf{2015}
  (2015), no.~706, 1--17.

\bibitem{ozsvath2016unoriented}
Peter~S Ozsv{\'a}th, Andr{\'a}s~I Stipsicz, and Zolt{\'a}n Szab{\'o},
  \emph{Unoriented knot {Floer} homology and the unoriented four-ball genus},
  International Mathematics Research Notices \textbf{2017} (2016), no.~17,
  5137--5181.

\bibitem{rasmussen2010khovanov}
Jacob Rasmussen, \emph{Khovanov homology and the slice genus}, Inventiones
  mathematicae \textbf{182} (2010), no.~2, 419--447.

\bibitem{rasmussen2016some}
\bysame, \emph{Some differentials on {Khovanov}--{Rozansky} homology}, Geometry
  \& Topology \textbf{19} (2016), no.~6, 3031--3104.

\bibitem{stovsic2009khovanov}
Marko Sto{\v{s}}i{\'c}, \emph{Khovanov homology of torus links}, Topology and
  its Applications \textbf{156} (2009), no.~3, 533--541.

\end{thebibliography}

\end{document}